\documentclass[letterpaper]{article} 
\usepackage{aaai23}  
\usepackage{times}  
\usepackage{helvet}  
\usepackage{courier}  
\usepackage[hyphens]{url}  
\usepackage{graphicx} 
\urlstyle{rm} 
\usepackage{natbib}  
\usepackage{caption} 
\frenchspacing  
\setlength{\pdfpagewidth}{8.5in}  
\setlength{\pdfpageheight}{11in}  
%
\usepackage{algorithm}
\usepackage{algorithmic}

%
\usepackage{newfloat}
\usepackage{listings}
\DeclareCaptionStyle{ruled}{labelfont=normalfont,labelsep=colon,strut=off} 
\lstset{%
	basicstyle={\footnotesize\ttfamily},
	numbers=left,numberstyle=\footnotesize,xleftmargin=2em,
	aboveskip=0pt,belowskip=0pt,%
	showstringspaces=false,tabsize=2,breaklines=true}
\floatstyle{ruled}
\newfloat{listing}{tb}{lst}{}
\floatname{listing}{Listing}
%
\pdfinfo{
/TemplateVersion (2023.1)
}

\nocopyright 

\usepackage{amsmath}
\usepackage{mathtools}
\usepackage{mathrsfs}
\usepackage{amssymb}
\usepackage{booktabs}
\usepackage{tablefootnote}
\usepackage{pifont}
\usepackage[shortlabels]{enumitem}
\setlist[enumerate]{noitemsep, topsep=0pt}
\usepackage{adjustbox}
\usepackage{xspace}


\newcommand{\R}{\mathbb{R}}

\newcommand{\Exp}{\mathbb{E}}
\newcommand{\basis}{\mathtt{\mathbf{e}}}

\newcommand{\blx}{\mathtt{\mathbf{x}}}
\newcommand{\bly}{\mathtt{\mathbf{y}}}
\newcommand{\blz}{\mathtt{\mathbf{z}}}

\newcommand{\x}{\mathtt{\mathbf{X}}}
\renewcommand{\d}{\mathtt{\mathbf{D}}}
\newcommand{\y}{\mathtt{\mathbf{Y}}}
\newcommand{\z}{\mathtt{\mathbf{Z}}}
\newcommand{\w}{\mathtt{\boldsymbol{\mathcal{W}}}}
\newcommand{\B}{\mathtt{\mathbf{B}}}

\newcommand{\identity}{\mathtt{\mathbf{I}}}
\newcommand{\vecx}{\mathtt{\mathbf{x}}}
\newcommand{\avg}{\frac{1}{N}\basis\basis^\top}

\newcommand{\W}{\mathtt{\mathbf{W}}}
\newcommand{\cmark}{\ding{51}}
\newcommand{\xmark}{\ding{55}}
\newcommand{\graph}{\mathcal{G}}
\newcommand{\edges}{\mathcal{E}}
\newcommand{\vertices}{\mathcal{V}}

\newcommand{\xik}{\mathtt{\mathbf{x}}_i^{(k)}}
\newcommand{\xiknew}{\mathtt{\mathbf{x}}_i^{(k+1)}}
\newcommand{\barxknew}{\bar{\mathtt{\mathbf{x}}}^{(k+1)}}
\newcommand{\barxk}{\bar{\mathtt{\mathbf{x}}}^{(k)}}

\newcommand{\mymatrix}[1]{\begin{bmatrix}#1\end{bmatrix}}
\newcommand{\norm}[1]{\left\Vert #1 \right\Vert}
\newcommand{\abs}[1]{\left\lvert #1\right\rvert}
\newcommand{\ip}[1]{\left\langle #1\right\rangle}

\DeclareMathOperator*{\argmin}{argmin}

\newcommand{\bigO}[1]{\mathcal{O}\left(#1\right)}
\newcommand{\fronorm}[1]{\left\Vert#1\right\Vert_F^2}
\renewcommand{\vec}[1]{\mathtt{\mathbf{#1}}}
\newcommand{\stack}[2]{\stackrel{\mathclap{#1}}{#2}}
\newcommand{\prob}[1]{\hbox{Prob}\big(#1\big)}
\newcommand{\defined}{\triangleq}

\newcommand{\ourmethod}{DEEPSTORM\xspace} 


\newtheorem{corollary}{Corollary}
\newtheorem{assumption}{Assumption}
\newtheorem{remark}{Remark}
\newtheorem{definition}{Definition}
\newtheorem{lemma}{Lemma}
\newtheorem{theorem}{Theorem}

\setcounter{secnumdepth}{2} 

%



\title{Proximal Stochastic Recursive Momentum Methods for Nonconvex Composite Decentralized Optimization}
\author {
    Gabriel Mancino-Ball\textsuperscript{\rm 1},
    Shengnan Miao\textsuperscript{\rm 1},
    Yangyang Xu\textsuperscript{\rm 1},
    Jie Chen\textsuperscript{\rm 2}
}
\affiliations {
    \textsuperscript{\rm 1}Department of Mathematical Sciences, Rensselaer Polytechnic Institute, Troy, NY 12180 \\
    \textsuperscript{\rm 2}MIT IBM-Watson AI Lab, IBM Research, Cambridge, MA 02142\\
    mancig@rpi.edu, snmiao236@gmail.com, xuy21@rpi.edu, chenjie@us.ibm.com
}
\begin{document}

\maketitle

\begin{abstract}
  Consider a network of $N$ decentralized computing agents collaboratively solving a nonconvex stochastic composite problem. In this work, we propose a single-loop algorithm, called DEEPSTORM, that achieves optimal sample complexity for this setting. Unlike double-loop algorithms that require a large batch size to compute the (stochastic) gradient once in a while, DEEPSTORM uses a small batch size, creating advantages in occasions such as streaming data and online learning. This is the first method achieving optimal sample complexity for decentralized nonconvex stochastic composite problems, requiring $\mathcal{O}(1)$ batch size. We conduct convergence analysis for DEEPSTORM with both constant and diminishing step sizes. Additionally, under proper initialization and a small enough desired solution error, we show that DEEPSTORM with a constant step size achieves a network-independent sample complexity, with an additional linear speed-up with respect to $N$ over centralized methods. All codes are made available at~\url{https://github.com/gmancino/DEEPSTORM}.
\end{abstract}

\section{Introduction}\label{sec:intro}

Recent years have seen an increase in designing efficient algorithms for solving large-scale machine learning problems, over a network of $N$ computing agents connected by a communication graph $\graph=(\vertices,\edges)$. Agents collaboratively solve the following composite problem:
\begin{equation}\label{original_problem}
  \min_{\blx}\frac{1}{N}\sum_{i=1}^N\left\{\phi_i(\blx)\triangleq f_i(\blx)+r(\blx)\right\},
\end{equation}
where the decision variable $\blx\in\R^{1\times p}$ is treated as a row vector; $f_i$ is a smooth, possibly nonconvex function known only to agent $i$; and $r$ is a convex, possibly non-smooth regularizer common to all agents. Agents $i$ and $j$ can communicate only if $(i,j)\in\edges$. Many real-world applications in machine learning~\cite{vogels21,ying21,yuan21,chamideh21} and reinforcement learning~\cite{zhang18,chao19} fit the form of~\eqref{original_problem}. Such scenarios differ from the centralized setting~\cite{mcmahan17,canh20}, where the agents are assumed to be able to communicate with one another globally via either a parameter server or a collective communication protocol. This setting arises naturally when data is distributed over a large geographic region or when a centralized communication structure is too costly~\cite{xin21hsgd}.

Utilizing the communication topology induced by $\graph$, we reformulate~\eqref{original_problem} into the following equivalent \textit{decentralized consensus optimization problem}:
\begin{equation}\label{dco_problem}
  \min_{\blx_1,\dots,\blx_N}\frac{1}{N}\sum_{i=1}^N\phi_i(\blx_i),\text{ s.t. }\blx_i=\blx_j,\enskip\forall(i,j)\in\edges.
\end{equation}
Problem~\eqref{dco_problem} allows for agents to maintain and update a local copy of the decision variable by locally computing gradients and performing neighbor communications.

The existence of a non-smooth regularizer $r$ renders many decentralized optimization methods for a smooth objective inappropriate. We assume that $r$ admits an easily computable (e.g. closed form) proximal mapping. Moreover, we are interested in the case where each local function $f_i$ takes the following expectation form:
\begin{equation}\label{f_i}
  f_i(\blx)\triangleq\Exp_{\xi\sim\mathcal{D}_i}\left[f_i(\blx;\xi)\right],
\end{equation}
with a slight abuse of notation for ease of exposition. In such a case, agents locally compute stochastic gradients of $f_i$. We adapt ideas from recent advances of stochastic optimization to the decentralized setting, by combining variance reduction techniques~\cite{johnson13,nguyen17,allenzhu18,wang19spider,cutkosky19,tran22} with gradient tracking~\cite{lorenzo16,nedic17,songtao19,zhang20gradtrack,koloskova21}, to produce an algorithmic framework that achieves the optimal sample complexity bounds established in~\cite{arjevani19} for nonconvex stochastic methods.

Our framework, coined \ourmethod, is a single-loop algorithm with an attractive property that, besides the initial iteration, each agent only needs $m=\bigO{1}$ stochastic samples to compute a gradient estimate. Further, when a diminishing step size is used, even the first iteration does not need a large batch, at the expense of an additional logarithmic factor in the sample complexity result. Intuitively, \ourmethod utilizes a momentum based variance reduction technique~\cite{cutkosky19,xu20,levy21,tran22} to guarantee convergence under a small batch size. The use of momentum simultaneously accelerates the computation and communication complexities over non-momentum based methods in the small batch setting; see Table~\ref{table:related_works} for a comparison. The recent ProxGT-SR-O/E~\cite{xin21} method can also achieve optimal sample complexity for solving~\eqref{dco_problem}, but at the expense of performing a double-loop which requires a large (stochastic) gradient computation every time the inner loop is completed. In scenarios where the batch size is uncontrollable, such as streaming or online learning, \ourmethod is advantageous.

When discussing sample complexity, it is paramount to specify the impact of the communication graph $\graph$. With a constant step size, we show that under a sufficient amount of initial, or \emph{transient}, iterations and proper initialization, \ourmethod behaves similarly to its centralized counterparts~\cite{cutkosky19,levy21,tran22}, while enjoying a linear speed-up with respect to $N$.

\textbf{We summarize the contributions of this work below:}

\begin{itemize}[leftmargin=*]

\item We propose a novel decentralized framework, \ourmethod, for nonconvex stochastic composite optimization problems. We show that \ourmethod achieves the optimal sample complexity with respect to solution accuracy, where each agent needs only $\bigO{1}$ samples to compute a local stochastic gradient. To the best of our knowledge, this is the first decentralized method that achieves optimal sample complexity for solving \emph{stochastic composite} problems by using only small batches.

\item Additionally, we establish convergence guarantees of \ourmethod with both constant and diminishing step sizes. When a constant step size is used, we show that under sufficiently many transient iterations and proper initialization, \ourmethod achieves a linear speed-up with respect to $N$, signifying an advantage over analogous centralized variance reduction methods~\cite{cutkosky19,levy21,tran22}.
  
\end{itemize}

\begin{table*}[t]
  \centering
 \begin{adjustbox}{max width=\textwidth}
	\renewcommand{\arraystretch}{1.6}
	\begin{tabular}{lccc}
	  \toprule
	  Method    &    $r\not\equiv0$    &    Batch size    &     Sample complexity (per agent)      \\
	  \midrule
	  D-PSGD~\cite{lian17}     &     \xmark     &     $\bigO{1}$     &     $\bigO{\max\left\{\frac{1}{N\varepsilon^2},\frac{N^2}{(1-\rho)^2\varepsilon}\right\}}$    \\
	  DSGT~\cite{xin21_dsgt}     &     \xmark     &     $\bigO{1}$     &     $\bigO{\max\left\{\frac{1}{N\varepsilon^2},\frac{\rho N}{(1-\rho)^3\varepsilon}\right\}}$     \\
	  D-GET~\cite{sun20}     &     \xmark     &     $\bigO{\frac{1}{\varepsilon}}$ or $\bigO{\frac{1}{\varepsilon^{0.5}}}$     &     $\bigO{\frac{1}{(1-\rho)^a\varepsilon^{1.5}}}$    \\
	  GT-HSGD~\cite{xin21hsgd}     &     \xmark     &     $\bigO{\frac{1}{\varepsilon^{0.5}}}$ then $\bigO{1}$     &     $\bigO{\max\left\{\frac{1}{N\varepsilon^{1.5}},\frac{\rho^4}{N(1-\rho)^3\varepsilon},\frac{\rho^{1.5}N^{0.5}}{(1-\rho)^{2.25}\varepsilon^{0.75}}\right\}}$    \\
	  SPPDM~\cite{wang21}     &    \cmark     &     $\Omega(\frac{N}{\varepsilon})$     &     $\bigO{\frac{1}{(1-\rho)^b\varepsilon^2}}$    \\
	  ProxGT-SR-O/E~\cite{xin21}     &    \cmark     &     $\bigO{\frac{1}{\varepsilon}}$ or $\bigO{\frac{1}{\varepsilon^{0.5}}}$     &     $\bigO{\frac{1}{N\varepsilon^{1.5}}}^{\dagger}$  \\
	  \midrule
      \textbf{Theorem~\ref{theorem:constant_convergence}}
      &    \cmark     &     $\bigO{\frac{1}{\varepsilon^{0.5}}}$ then $\bigO{1}$     &     $\bigO{\max\left\{\frac{1}{N\varepsilon^{1.5}},\frac{1}{(1-\rho)^2\varepsilon},\frac{N^{0.5}}{\varepsilon^{0.75}}\right\}}^{\ddagger}$    \\
      \textbf{Theorem~\ref{theorem:diminishing_convergence}}
      &    \cmark     &    $\bigO{1}$     &     $\tilde{\mathcal{O}}\left(\frac{1}{\varepsilon^{1.5}}\right)$    \\
	  \bottomrule
	\end{tabular}
	\end{adjustbox}
	\caption{Comparison between \ourmethod (bottom two rows) and representative decentralized stochastic nonconvex methods. The sample complexity takes into account both the stationarity and consensus violation. Since D-GET and SPPDM do not show the dependence on $\rho$, we use unspecified powers $a$ and $b$, following the practice of~\cite{xin21hsgd}. $^\dagger$The sample complexity of ProxGT-SR-O/E is independent of $\rho$ by \emph{requiring} multiple communications per update; this is similar to our result in Theorem~\ref{theorem:diminishing_convergence}. $^\ddagger$With multiple communications and $\varepsilon\le N^{-2}$, Theorem~\ref{theorem:constant_convergence} guarantees our algorithm attains the optimal $\bigO{N^{-1}\varepsilon^{-1.5}}$ sample complexity, but with a smaller batch size than ProxGT-SR-O/E .}
	\label{table:related_works}
\end{table*}

\section{Related works}
A rich body of literature exists for solving the problem~\eqref{dco_problem} in the decentralized setting. We discuss related works below.

\textbf{Nonconvex decentralized methods.} Of particular relevance to this work are methods for nonconvex $f_i$'s. When $f_i$ takes the finite-sum form, deterministic methods (with full gradient computation) such as DGD~\cite{zeng18}, Near-DGD~\cite{iakovidou21}, Prox-PDA~\cite{hong17}, xFILTER~\cite{sun19}, and SONATA~\cite{scutari19} converge to an $\varepsilon$-stationary point in $\bigO{\varepsilon^{-1}}$ iterations. They all work for the case $r\equiv0$ only, except SONATA. For stochastic methods, we summarize a few representative ones in Table~\ref{table:related_works}, including the information of whether they handle $r\not\equiv0$. Note that D-PSGD~\cite{lian17} extends the convergence results of DGD; D$^2$~\cite{tang18} further improves over D-PSGD by relaxing a dissimilarity assumption.

Gradient tracking~\cite{lorenzo16,nedic17} has been introduced as a tool to track the gradient of the global objective and has been studied extensively in the nonconvex and stochastic setting, under different names~\cite{zhang20gradtrack,songtao19,koloskova21,xin21_dsgt}. Many works now utilize this technique to improve the performance of their methods; those that mimic the SARAH~\cite{nguyen17} and Spider~\cite{wang19} updates have become popular for their improved theoretical convergence rates. D-SPIDER-SFO~\cite{pan20} and D-GET~\cite{sun20} are two such methods. When $f_i$ takes the finite-sum form, GT-SARAH~\cite{xin21sarah} and DESTRESS~\cite{li21} improve the analysis of D-GET by obtaining an optimal sample complexity and an optimal communication complexity, respectively. All these methods require computing a stochastic gradient with a large batch size every few iterations. 

GT-HSGD~\cite{xin21hsgd} can be considered a special case of our method. It uses a stochastic gradient estimator of the form proposed in~\cite{cutkosky19,levy21}, requiring a large initial batch size, followed by $\bigO{1}$ batch size subsequently. The convergence analysis of GT-HSGD requires $r\equiv0$; hence part of our work is to extend it to the case of $r\not\equiv0$. Similar extensions have been proposed for other methods; for example, ProxGT-SR-O/E~\cite{xin21} extends D-GET, GT-SARAH, and DESTRESS. Additionally, the primal-dual method SPPDM~\cite{wang21} is shown to converge in $\bigO{\varepsilon^{-1}}$ communications, but it requires a large batch size proportional to $\varepsilon^{-1}$. Using such a batch size can negatively impact the performance on machine learning problems~\cite{keskar17}.

\textbf{Other decentralized methods.} Several other decentralized methods exist for scenarios differing from that considered here. They include methods that work for convex problems only, such as DGD~\cite{yuan16}, EXTRA~\cite{shi15}, ADMM~\cite{shi14}, DIGing~\cite{nedic17}, Acc-DNGD~\cite{qu19}, MSDA~\cite{scaman17}, DPAG~\cite{ye20dapg}, Flex-PD~\cite{mansoori21}, IDEAL~\cite{arjevani20}, PUDA~\cite{alghunaim21}, PMGT-VR~\cite{ye20pmgtvr}, and DPSVRG~\cite{li21dpsvrg}; asynchronous methods, such as AD-PSGD~\cite{lian17_async}, the Asynchronous Primal-Dual method~\cite{wu17}, APPG~\cite{zhang21}, asynchronous ADMM~\cite{wei13, hong17admm}, and AD-OGP~\cite{jiang21}; methods that operate under a time-varying network topology, such as Acc-GT~\cite{li21accgt} and ADOM~\cite{kovalev21}; and methods that focus on providing convergence guarantees when communication compression is used, such as DCD-PSGD~\cite{tang18compression}, SQuARM-SGD~\cite{singh21}, and the Primal-Dual method developed in~\cite{chen21}.

\section{DEEPSTORM framework}\label{sec:algo}

We first state the assumed conditions of each $\phi_i$ and the communication graph $\graph$. They are standard in variance reduction~\cite{cutkosky19,xu20,tran22} and decentralized methods~\cite{lian17,sun20,xin21_dsgt}.

\begin{assumption}\label{assumption:objective_function}
  The following conditions hold.
  \begin{enumerate}[(i), leftmargin=*]
  \item The regularizer function $r$ is convex and admits an easily computable proximal mapping.
  \item Each component function $f_i$ is mean-squared $L$-smooth; i.e. there exists a constant $0<L<\infty$ such that $\forall\,\vec{a},\vec{b}\in\R^{1\times p}$ and $\forall\,i=1,\dots,N$,
	\begin{equation}\label{assumption:smoothness}
	  \Exp_\xi\norm{\nabla f_i(\vec{a};\xi)-\nabla f_i(\vec{b};\xi)}_2^2\le L^2\norm{\vec{a}-\vec{b}}_2^2.
	\end{equation}
  \item There exists $\sigma>0$ such that $\forall\,\vec{a}\in\R^{1\times p}$,
	\begin{equation}\label{assumption:variance}
		\begin{split}
	  &\Exp_\xi[\nabla f_i(\vec{a};\xi)]=\nabla f_i(\vec{a}),\\
	  &\Exp\norm{\nabla f_i(\vec{a};\xi)-\nabla f_i(\vec{a})}_2^2\le\sigma^2.
	  \end{split}
	\end{equation}
  \item The global function $\phi=\frac{1}{N}\sum_{i=1}^N\phi_i$ is lower bounded; i.e. there exists a constant $\phi^*$ such that
	\begin{equation}\label{assumption:lower_bound}
	  -\infty<\phi^*\le \phi(\vec{a}),\enskip\forall\vec{a}\in\R^{1\times p}.
	\end{equation}
  \end{enumerate}
\end{assumption}

\begin{assumption}\label{assumption:mixing_matrix}
  The graph $\graph$ is connected and undirected. It can be represented by a \emph{mixing matrix} $\W\in\R^{N\times N}$ such that:
  \begin{enumerate}[(i), leftmargin=*]
  \item \textbf{(Decentralized property)} $w_{ij}>0$ if $(i,j)\in \edges$ and $w_{ij}=0$ otherwise;
  \item \textbf{(Symmetric property)} $\W=\W^\top$;
  \item \textbf{(Null-space property)} $\mathrm{null}\left(\identity-\W\right)=\mathrm{span}\{\basis\}$, where $\basis\in\R^N$ is the vector of all ones; and
  \item \textbf{(Spectral property)} the eigenvalues of $\W$ lie in the range $(-1,1]$ with
    \begin{equation}\label{spectral_gap}
        \rho\triangleq \norm{\W-\frac{1}{N}\basis\basis^\top}_2<1.
    \end{equation}
  \end{enumerate}
\end{assumption}

Note that the entry values of $\W$ can be flexibly designed as long as Assumption~\ref{assumption:mixing_matrix} holds. One example is $\W=\identity-\mathbf{L}/\tau$, where $\mathbf{L}$ is the combinatorial Laplacian of $\graph$ and $\tau$ is a value greater than half of its largest eigenvalue. It is not hard to see that the consensus constraint $\blx_i=\blx_j$ for all $(i,j)\in\edges$ in~\eqref{dco_problem} is equivalent to $\W\x=\x$, where the $i$-th row of $\x$ is $\blx_i$. The value $\rho$ in~\eqref{spectral_gap} indicates the connectedness of the graph. The quantity $1-\rho$ is sometimes referred to as the \emph{spectral gap}; a higher value suggests that the graph is more connected and consensus of the $\blx_i$'s is easier to achieve.

Under Assumptions~\ref{assumption:objective_function} and~\ref{assumption:mixing_matrix}, we now present the \ourmethod framework. We start with the basic algorithm and later generalize the simple communication (using $\W$) with a more general communication operator, denoted by $\w_T$.

\textbf{Basic algorithm.} Let $\blx_i^{(k)}$ be the $k$-th iterate for agent $i$, and let the matrix $\x^{(k)}$ contain all the $k$-th iterates among agents, stacked as a matrix. We will similarly use such vector and matrix notations for other variables. Our \textbf{DE}c\textbf{E}ntralized \textbf{P}roximal \textbf{STO}chastic \textbf{R}ecursive \textbf{M}omentum framework, \ourmethod, uses a variance reduction variable $\vec{d}_i^{(k)}$ and a gradient tracking variable $\vec{y}_i^{(k)}$ to improve the convergence of $\blx_i^{(k)}$. \ourmethod contains the following steps in each iteration $k$:

\begin{enumerate}[leftmargin=*]
\item Communicate the local variables:
  \begin{equation}\label{algo:comm_update}
    \z^{(k)} = \W \x^{(k)}.
  \end{equation}
  
\item Update each local variable (by using, e.g., proximal mappings):
  \begin{equation}\label{algo:x_update}
	\small\xiknew=\argmin_{\blx_i}\left\{\alpha_kr(\blx_i)+\frac{1}{2}\norm{\blx_i-\left(\blz_i^{(k)}-\alpha_k\vec{y}_i^{(k)}\right)}^2\right\}.
  \end{equation}

\item Update the variance reduction variable:
  \begin{equation}\label{base_var_reduction}
    \vec{d}_i^{(k+1)}=(1-\beta_k)\left(\vec{d}_i^{(k)}+\vec{v}_i^{(k+1)}-\vec{u}_i^{(k+1)}\right)+\beta_k\tilde{\vec{v}}_i^{(k+1)},
  \end{equation}
  where
  \begin{equation}\label{storm_u_v}
  	\begin{split}
    \vec{v}_i^{(k+1)}=&\frac{1}{m}\sum_{\mathclap{\xi\in B_i^{(k+1)}}}\nabla f_i(\blx^{(k+1)}_i;\xi),\\
    \vec{u}_i^{(k+1)}=&\frac{1}{m}\sum_{\mathclap{\xi\in B_i^{(k+1)}}}\nabla f_i(\blx^{(k)}_i;\xi).
    \end{split}
  \end{equation}
  Here, $B_i^{(k+1)}$ is a batch of $m$ samples at the current iteration. Note that while $\vec{v}_i^{(k+1)}$ is evaluated at the current iterate, $\vec{u}_i^{(k+1)}$ is evaluated at the previous iterate. We make the assumption that for all $k$ and all agents $i$ and $j$, $B_i^{(k+1)}$ and $B_j^{(k+1)}$ contain independent and mutually independent random variables. The part $\tilde{\vec{v}}_i^{(k+1)}$ can be any unbiased estimate of $\nabla f_i(\blx^{(k+1)}_i)$ with bounded variance; its details will be elaborated soon.

\item Update the gradient tracking variable via communication:
  \begin{equation}\label{algo:y_update}
    \y^{(k+1)} = \W \left( \y^{(k)} + \d^{(k+1)} - \d^{(k)} \right).
  \end{equation}

\end{enumerate}

The step that updates the variance reduction variable, \eqref{base_var_reduction}, is motivated by Hybrid-SGD~\cite{tran22}, which allows for a single-loop update. Intuitively, this variable is a convex combination of the SARAH~\cite{nguyen17} update and $\tilde{\vec{v}}_i^{(k+1)}$, allowing for strong variance reduction and meanwhile flexibility in design. By doing so, a constant batch size $m$ suffices for convergence. This is a useful property in scenarios of online learning and real-time decision making, where it is unrealistic to obtain and store mega batches for training~\cite{xu20,xin21hsgd}.

\textbf{Examples of $\tilde{\vec{v}}_i^{(k+1)}$.} The vector $\tilde{\vec{v}}_i^{(k+1)}$ in~\eqref{base_var_reduction} can be any unbiased local gradient estimate. In this work, we consider two cases: either $\tilde{\vec{v}}_i^{(k+1)}$ is evaluated on another set of samples $\tilde{B}_i^{(k+1)}$, defined analogously to $B_i^{(k+1)}$ that is used to compute $\vec{v}_i^{(k+1)}$ in~\eqref{storm_u_v}, such that
\begin{equation}\label{unbiased_assumption}
	\begin{split}
  &\text{$\tilde{B}_i^{(k+1)}$ is independent of $B_i^{(k+1)}$ with }\\
  &\Exp\norm{\tilde{\vec{v}}_i^{(k+1)}-\nabla f_i(\blx_i^{(k+1)})}_2^2\le\hat{\sigma}^2;
  \end{split}\tag{v1}
\end{equation}
or simply
\begin{equation}\label{v2_update}
  \tilde{\vec{v}}_i^{(k+1)}=\vec{v}_i^{(k+1)}\text{ with }\Exp\norm{\vec{v}_i^{(k+1)}-\nabla f_i(\blx_i^{(k+1)})}_2^2\le\hat{\sigma}^2, \tag{v2}
\end{equation}
for some $\hat{\sigma}>0$. Two possible unbiased estimators that satisfy~\eqref{unbiased_assumption} are
\begin{align}
  \tilde{\vec{v}}_i^{(k+1)}=&\frac{1}{m}\sum_{\mathclap{\tilde{\xi}\in\tilde{B}_i^{(k+1)}}}\nabla f_i(\blx_i^{(k+1)};\tilde{\xi}),\label{v1_update}\tag{v1-SG}\\
  \tilde{\vec{v}}_i^{(k+1)}=&
  \frac{1}{m}\sum_{\mathclap{\tilde{\xi}\in\tilde{B}_i^{(k+1)}}}\nabla f_i(\blx_i^{(k+1)};\tilde{\xi})\nonumber\\
  &+\frac{1}{m}\sum_{\mathclap{\tilde{\tilde{\xi}}\in \tilde{\tilde{B}}_i^{(\tau_{k+1})}}}\nabla f_i(\blx_i^{(\tau_{k+1})};\tilde{\tilde{\xi}})
  -\frac{1}{m}\sum_{\mathclap{\tilde{\xi}\in\tilde{B}_i^{(k+1)}}}\nabla f_i(\blx_i^{(\tau_{k+1})};\tilde{\xi}), \label{v3_update}\tag{v1-SVRG}
\end{align}
for some $\tau_{k+1} < k+1$. The first estimator is a standard one, evaluated by using a batch $\tilde{B}_i^{(k+1)}$ independent of $B_i^{(k+1)}$. The second estimator, which introduces further variance reduction, uses an additional past-time iterate $\blx_i^{(\tau_{k+1})}$ and a batch $\tilde{\tilde{B}}_i^{(\tau_{k+1})}$, whose size is generally greater than $m$. Such an update is inspired by the SVRG method~\cite{johnson13}. Here, we have $\hat{\sigma}^2=m^{-1}\sigma^2$ for the estimators~\eqref{v1_update} and~\eqref{v2_update}; while $\hat{\sigma}^2=\left(3m^{-1}+6\abs{\tilde{\tilde{B}}_i^{(\tau_{k+1})}}^{-1}\right)\sigma^2$ for~\eqref{v3_update}, where we recall that $\sigma^2$ comes from~\eqref{assumption:variance}. Note that beyond the two examples, our proof techniques hold for any unbiased estimator satisfying~\eqref{unbiased_assumption}, leaving more open designs.

\textbf{Generalized communication.} Steps~\eqref{algo:comm_update} and~\eqref{algo:y_update} use the mixing matrix to perform weighted averaging of neighbor information. The closer $\W$ is to $\frac{1}{N}\basis\basis^\top$, the more uniform the rows of $\x^{(k+1)}$ are, implying agents are closer to consensus. Hence, to improve convergence, we can apply multiple mixing rounds in each iteration. To this end, we generalize the network communication by using an operator $\w_T$, which is a degree-$T$ polynomial in $\W$ that must satisfy Assumption~\ref{assumption:mixing_matrix} parts (\textit{ii})--(\textit{iv}). We adopt Chebyshev acceleration~\cite{auzinger11,scaman17,xin21,li21}, which defines for any input matrix $\B_0$, $\B_T=\w_T(\B_0)$, where $\B_1=\W\B_0$, $\mu_0 = 1$, $\mu_1 = \frac{1}{\rho}$ for $\rho$ defined in~\eqref{spectral_gap}, and recursively,
\begin{equation}
	\begin{split}
  \mu_{t+1} =& \frac{2}{\rho}\mu_t - \mu_{t-1} \text{ and}\\
  \B_{t+1}=&\frac{2\mu_t}{\rho\mu_{t+1}}\W\B_t-\frac{\mu_{t-1}}{\mu_{t+1}}\B_{t-1},\text{ for } t\le T-1.
  \end{split}
\end{equation}

It is not hard to see that $\basis$ is an eigenvector of $\w_T$, associated to eigenvalue 1, whose algebraic multiplicity is 1. Therefore, 
\begin{equation}\label{spectral_gap_cheby}
  \tilde{\rho}\triangleq\norm{\w_T-\avg}_2<1.
\end{equation}
Moreover, $\tilde{\rho}$ converges to zero exponentially with $T$, bringing $\w_T$ rather close to an averaging operator (for details, see Appendix~\ref{appendix:chebyshev}). Notice with $T=1$, $\w_T$ reduces to $\W$.

We summarize the overall algorithm in Algorithm~\ref{algo:}, by replacing $\W$ in~\eqref{algo:comm_update} and~\eqref{algo:y_update} with $\w_T$. Additionally, see the discussions after Theorems~\ref{theorem:constant_convergence} and~\ref{theorem:diminishing_convergence} regarding the probability distribution for choosing the output of Algorithm~\ref{algo:}.

\begin{algorithm}[tb]
\caption{DEEPSTORM}
\label{algo:}
\textbf{Input}: Initial $\x^{(0)}$, mixing rounds $T_0,T$, iteration $K$, and $\{\alpha_k\}$, $\{\beta_k\}$
\begin{algorithmic}[1] 
\STATE Compute $\vec{d}_i^{(0)}=\frac{1}{m_0}\sum_{\xi\in B_i^{(0)}}\nabla f_i(\blx_i^{(0)};\xi)$ $\forall i$
\STATE Communicate to obtain $\y^{(0)}=\w_{T_0}(\d^{(0)})$
\FOR{$k=0,\dots,K-1$}
\STATE Communicate to obtain $\z^{(k)} = \w_T(\x^{(k)})$
\STATE Update local decision variables by~\eqref{algo:x_update}
\STATE Obtain local gradient estimator by~\eqref{base_var_reduction}
\STATE Communicate to update gradient tracking variable $\y^{(k+1)} = \w_T ( \y^{(k)} + \d^{(k+1)} - \d^{(k)})$
\ENDFOR
\end{algorithmic}
\textbf{Output}: $\z^{(\tau)}$ with $\tau$ chosen randomly from $\{0,\dots,K-1\}$
\end{algorithm}

\section{Convergence results}\label{sec:theory}

For the convergence of \ourmethod, we start with the following standard definitions~\cite{xu20,xin21}.

\begin{definition}\label{def:prox_grad_map}
  Given $\blx\in\hbox{\normalfont dom}(r)$, $\bly$, and $\eta>0,$ define the \emph{proximal gradient mapping} of $\bly$ at $\blx$ to be
  \begin{equation}\label{def:prox_grad_map:eqn}
    \textstyle
	P\left(\blx,\bly,\eta\right)\triangleq\frac{1}{\eta}\left(\blx-\hbox{\normalfont prox}_{\eta r}(\blx-\eta\bly)\right),
  \end{equation}
  where $\hbox{\normalfont prox}$ denotes the proximal operator $\hbox{\normalfont prox}_{g}(\mathbf{v})=\argmin_{\mathbf{u}}\left\{g(\mathbf{u})+\frac{1}{2}\|\mathbf{u}-\mathbf{v}\|_2^2\right\}$.
\end{definition}

\begin{definition}\label{def:stationarity}
  A stochastic matrix $\x\in\R^{N\times p}$ is called a \emph{stochastic $\varepsilon$-stationary point} of (\ref{dco_problem}) if 
  \begin{equation}\label{def:stationarity:eqn}
 	\Exp\left[\frac{1}{N}\sum_{i=1}^N\norm{P\left(\blx_i,\nabla f(\blx_i),\eta\right)}_2^2+\frac{L^2}{N}\fronorm{\x_\perp}\right]\le\varepsilon,
  \end{equation}
  where $\eta>0$, $\nabla f\triangleq\frac{1}{N}\sum_{j=1}^N\nabla f_j$, $\blx_i$ is the $i$-th row of $\x$, and $\x_\perp\triangleq\x-\frac{1}{N}\basis\basis^\top\x$ is the difference between all $\blx_i$ and their average $\frac{1}{N}\sum_{j=1}^N\blx_j$.
\end{definition}

Our analyses rely on the construction of two novel Lyapunov functions as indicated by  Theorems~\ref{theorem:constant_convergence} and~\ref{theorem:diminishing_convergence} below. These Lyapunov functions guarantee convergence through the careful design of function coefficients which result from solving non-linear systems of inequalities in either the constant or diminishing step size case. We first consider the use of a constant step size. The convergence rate result is given in the following theorem. Its proof is given in Appendix~\ref{appendix:constant_proof}.

\begin{theorem}\label{theorem:constant_convergence}
  Under Assumptions~\ref{assumption:objective_function} and~\ref{assumption:mixing_matrix}, let $\left\{\left(\x^{(k)},\d^{(k)},\y^{(k)},\z^{(k)}\right)\right\}$ be obtained by Algorithm~\ref{algo:} via~\eqref{algo:x_update},~\eqref{algo:y_update}, and~\eqref{base_var_reduction} such that $\tilde{\vec{v}}_i^{(k+1)}$ is any unbiased gradient estimator that satisfies either~\eqref{unbiased_assumption} or~\eqref{v2_update}. Further, let $\alpha_k$ and $\beta_k$ be chosen as 
 {\small
  \begin{equation}\label{lemma:constant_choice:bounds}
  	\begin{split}
	&\alpha_k=\frac{\alpha}{K^{\frac{1}{3}}},\enskip \beta_k=\frac{144L^2\alpha^2}{NK^{\frac{2}{3}}},
    \text{ with }\\ &\alpha\le\min\left\{\frac{K^{\frac{1}{3}}}{32L},\frac{(1-\tilde{\rho})^2K^{\frac{1}{3}}}{64L}\right\},
    \end{split}
  \end{equation}
  }for all $k=0,\dots,K-1$.
  Then, it holds that $\beta_k\in(0,1)$ for all $k\ge0$ and that
  {\small
  \begin{equation}\label{theorem:constant_convergence:bound}
	\begin{split}
	  &\frac{1}{K}\sum_{k=0}^{K-1}\Exp\left(\frac{1}{N}\sum_{i=1}^N\norm{P\left(\vec{z}_i^{(k)},\nabla f(\vec{z}_i^{(k)}),\alpha_k\right)}_2^2+\frac{L^2}{N}\fronorm{\z_\perp^{(k)}}\right)\\
	  &\le\frac{512}{\alpha K^{\frac{2}{3}}}\left(\Phi^{(0)}-\phi^*\right)+\left(\frac{2048}{L(1-\tilde{\rho})^2K}\right)\frac{144^2L^4\alpha^3\hat{\sigma}^2}{N^2}\\
	  &+\left(\frac{128}{3L^2\alpha K^{\frac{2}{3}}}+\frac{8192\alpha}{K^{\frac{4}{3}}}+\frac{2048\alpha}{NK^{\frac{4}{3}}}\right)\frac{144^2L^4\alpha^3\hat{\sigma}^2}{N^2},
	\end{split}
  \end{equation}
  }for some $\Phi^{(0)}>\phi^*$ that depends on the initialization. Note that $\Phi^{(k)}$ is defined in~\eqref{lemma:base_change_update:lyapunov} in Appendix~\ref{appendix:theory} for any $k\ge0.$
\end{theorem}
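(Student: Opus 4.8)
The plan is to construct a single Lyapunov function $\Phi^{(k)}$ that combines (i) the objective value $\Exp\,\phi(\barxk)$ at the average iterate $\barxk=\frac1N\basis\basis^\top\x^{(k)}$, (ii) the consensus error $\frac{L^2}{N}\fronorm{\x_\perp^{(k)}}$, (iii) the gradient-tracking error $\frac1N\fronorm{\y_\perp^{(k)}}$, and (iv) the gradient-estimation error of the STORM-type estimator, tracked both in averaged form $\Exp\norm{\bar\d^{(k)}-\overline{\nabla F}^{(k)}}^2$ (where $\overline{\nabla F}^{(k)}=\frac1N\sum_i\nabla f_i(\blx_i^{(k)})$) and in aggregated form $\frac1N\sum_i\Exp\norm{\vec d_i^{(k)}-\nabla f_i(\blx_i^{(k)})}^2$, with positive weights to be fixed later. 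Two structural facts are used throughout: $\w_T$ preserves row-averages, so $\barxk=\bar\z^{(k)}$ and the prox step changes the average only through the averaged gradient surrogate; and the tracking recursion~\eqref{algo:y_update} enforces $\bar\y^{(k)}=\bar\d^{(k)}$ for all $k$, which makes the averaged dynamics behave like a centralized STORM iteration.

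First I would prove a one-step descent inequality for $\Exp\,\phi(\barxk)$. Using $L$-smoothness of $f$, the optimality condition of~\eqref{algo:x_update}, and the definition of the proximal gradient mapping, one gets $\phi(\barxknew)\le\phi(\barxk)-c\alpha_k\norm{P(\bar\z^{(k)},\bar\y^{(k)},\alpha_k)}^2+(\text{error terms})$, where the errors are controlled by $\fronorm{\x_\perp^{(k)}}$ (through $\nabla f(\blx_i^{(k)})$ versus $\nabla f(\barxk)$), by the averaged estimation error $\norm{\bar\d^{(k)}-\overline{\nabla F}^{(k)}}^2$, and by higher-order $\alpha_k^2$ terms. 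Since the theorem's stationarity measure is evaluated at the $\vec z_i^{(k)}$, I would then relate $\frac1N\sum_i\norm{P(\vec z_i^{(k)},\nabla f(\vec z_i^{(k)}),\alpha_k)}^2$ to $\norm{P(\bar\z^{(k)},\bar\y^{(k)},\alpha_k)}^2$ via Lipschitz/non-expansiveness properties of $P$, paying only consensus- and estimation-error terms that the Lyapunov function already carries.

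Next I would establish the contraction recursions for the remaining pieces: for consensus, $\fronorm{\z_\perp^{(k)}}\le\tilde\rho^2\fronorm{\x_\perp^{(k)}}$ and then a bound on $\fronorm{\x_\perp^{(k+1)}}$ using non-expansiveness of the row-wise prox together with an $\alpha_k^2$-weighted tracking term; for tracking, $\fronorm{\y_\perp^{(k+1)}}\le\tilde\rho^2\fronorm{\y_\perp^{(k)}+\d^{(k+1)}-\d^{(k)}}$, expanded with Young's inequality and closed using a bound on $\fronorm{\d^{(k+1)}-\d^{(k)}}$; and for the estimation error, the STORM recursion $\Exp\norm{\bar\d^{(k+1)}-\overline{\nabla F}^{(k+1)}}^2\le(1-\beta_k)^2\Exp\norm{\bar\d^{(k)}-\overline{\nabla F}^{(k)}}^2+\frac{(1-\beta_k)^2L^2}{mN}\Exp\fronorm{\x^{(k+1)}-\x^{(k)}}+\frac{\beta_k^2\hat\sigma^2}{N}$, obtained by conditioning on the past and isolating the zero-mean martingale increments of~\eqref{base_var_reduction} --- the $1/N$ factors here being exactly the source of the linear speed-up. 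I would also bound $\fronorm{\x^{(k+1)}-\x^{(k)}}$ and $\fronorm{\d^{(k+1)}-\d^{(k)}}$ in terms of the proximal-gradient-mapping norm, the consensus and tracking errors, $\alpha_k$, $\beta_k$, and $\hat\sigma^2$, so every right-hand side refers only to quantities appearing in $\Phi^{(k)}$.

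The crux --- and the step I expect to be the main obstacle --- is choosing the Lyapunov weights so that summing the above inequalities yields $\Phi^{(k+1)}\le\Phi^{(k)}-c'\alpha_k\big(\frac1N\sum_i\norm{P(\vec z_i^{(k)},\nabla f(\vec z_i^{(k)}),\alpha_k)}^2+\frac{L^2}{N}\fronorm{\z_\perp^{(k)}}\big)+\bigO{\beta_k^2\hat\sigma^2/N}$ with genuinely positive residual coefficients. This is a coupled system --- the iterate difference feeds the estimation and tracking errors, the tracking error feeds the iterate update, and the consensus error feeds the descent --- so the weights must be ordered carefully, and the step-size restrictions in~\eqref{lemma:constant_choice:bounds} (in particular $\alpha=\bigO{(1-\tilde\rho)^2}$ and $\alpha=\bigO{1/L}$ after the $K^{1/3}$ rescaling) are precisely what make the $\tilde\rho^2$ and $(1-\beta_k)^2$ contractions beat the accumulated cross terms; the explicit $\Phi^{(k)}$ in~\eqref{lemma:base_change_update:lyapunov} is the solution of this inequality system. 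Once this descent is in hand, I would telescope from $k=0$ to $K-1$, divide by $K$, bound $\Phi^{(0)}-\Phi^{(K)}\le\Phi^{(0)}-\phi^*$, and substitute $\alpha_k=\alpha K^{-1/3}$, $\beta_k=144L^2\alpha^2N^{-1}K^{-2/3}$; the telescoped term produces the $\frac{512}{\alpha K^{2/3}}(\Phi^{(0)}-\phi^*)$ contribution, while summing $\beta_k^2\hat\sigma^2/N$ over $k$ --- weighted by the various Lyapunov coefficients and the $1/(1-\tilde\rho)^2$ factors coming from the tracking and consensus bounds --- yields the remaining $\hat\sigma^2$ terms in~\eqref{theorem:constant_convergence:bound}.
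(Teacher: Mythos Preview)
Your proposal is essentially the paper's proof: the same five-term Lyapunov function (objective at the average, consensus $\fronorm{\x_\perp}$, tracking $\fronorm{\y_\perp}$, aggregated estimation error $\fronorm{\vec R}$, and averaged estimation error $\norm{\bar{\vec r}}^2$), the same contraction and STORM-error recursions, the same balancing of the Lyapunov weights under~\eqref{lemma:constant_choice:bounds}, and the same telescope-then-substitute finish.

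One point needs care, however. With $r\not\equiv0$ the row-averaged iterate is \emph{not} $\prox_{\alpha_k r}(\bar\z^{(k)}-\alpha_k\bar\y^{(k)})$, so ``the averaged dynamics behave like a centralized STORM iteration'' is false and you cannot obtain a descent of the form $\phi(\barxknew)\le\phi(\barxk)-c\alpha_k\norm{P(\bar\z^{(k)},\bar\y^{(k)},\alpha_k)}^2+\cdots$ directly. The paper sidesteps this: it uses convexity of $r$ to bound $r(\barxknew)\le\frac1N\sum_i r(\blx_i^{(k+1)})$, then applies the per-agent prox optimality condition to produce negative $\frac1{N\alpha_k}\fronorm{\x^{(k+1)}-\bar\x^{(k)}}$ and $\frac1{N\alpha_k}\fronorm{\x^{(k+1)}-\z^{(k)}}$ terms in the descent (never $\norm{P}^2$ at the average); only after telescoping does a separate lemma bound $\frac1N\sum_i\norm{P(\vec z_i^{(k)},\nabla f(\vec z_i^{(k)}),\alpha_k)}^2$ by $\frac{2}{N\alpha_k^2}\fronorm{\x^{(k+1)}-\z^{(k)}}$ plus tracking, consensus, and $\norm{\bar{\vec r}^{(k)}}^2$ errors. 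Your later step of relating the per-agent $P$ to the Lyapunov quantities is exactly this lemma, so the fix is just to run the descent in the $\fronorm{\x^{(k+1)}-\z^{(k)}}$ variables rather than through $P$ at the average.
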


\textbf{Network-independent sample complexity, linear speed-up, and communication complexity.} Theorem~\ref{theorem:constant_convergence} establishes convergence based on the sequence $\{\z^{(k)}\}$ defined in~\eqref{algo:comm_update}. As a consequence, if we let each agent start with the same initial variable $\blx^{(0)}$, set $\alpha=\frac{N^{\frac{2}{3}}}{64L}$ and the initial batch size $m_0=\sqrt[3]{NK}$, and choose initial communication rounds $T_0=\tilde{\mathcal{O}}\left((1-\rho)^{-0.5}\right)$ for $\y^{(0)}$, then for all $K\ge\frac{N^2}{(1-\tilde{\rho})^6}$, \ourmethod achieves stochastic $\varepsilon$-stationarity for some iterate $\z^{(\tau)}$, where $\tau$ is selected uniformly from $\{0,\dots,K-1\}$, by using
\begin{equation}\label{corollary:constant_convergence:gradients_and_comms}
  \bigO{\max\left\{\frac{(L\Delta)^{\frac{3}{2}}+\hat{\sigma}^3}{N\varepsilon^{\frac{3}{2}}},\frac{\hat{\sigma}^2}{(1-\tilde{\rho})^2\varepsilon},\frac{\sqrt{N}\hat{\sigma}^{\frac{3}{2}}}{\varepsilon^{\frac{3}{4}}}\right\}}
\end{equation}
local stochastic gradient computations. For the formal statement, see Corollary~\ref{corollary:constant_convergence} in Appendix~\ref{appendix:constant_complexity}. Here, $\Delta=\Phi^{(0)}-\phi^*$ denotes an initial function gap, which is independent of $\tilde{\rho}$, $N$, and $K$. Moreover, when $\varepsilon\le N^{-2}(1-\tilde{\rho})^4$, we see that $\bigO{N^{-1}\varepsilon^{-1.5}}$ dominates in~\eqref{corollary:constant_convergence:gradients_and_comms}; hence, this result manifests a linear speed-up with respect to $N$ over the centralized counterparts~\cite{cutkosky19,tran22} of \ourmethod. Furthermore, if the number of Chebyshev mixing rounds is $T=\lceil\frac{2}{\sqrt{1-\rho}}\rceil$, we have $(1-\tilde{\rho})\ge\frac{1}{\sqrt{2}}$, which suggests that $\varepsilon$ does not need to be small for the linear speed-up to hold. For details, see Lemma~\ref{lemma:chebyshev} and Remark~\ref{remark:complexity_discussion} in the Appendix. The communication cost is $\bigO{T_0+TK}$. 

In parallel, we state a result for the case of diminishing step size. Its proof is given in Appendix~\ref{appendix:diminishing_proof}.

\begin{theorem}\label{theorem:diminishing_convergence}
  Under the same assumptions as Theorem~\ref{theorem:constant_convergence},
  let $\alpha_k$ and $\beta_k$ be chosen as 
  {\small
  \begin{equation}\label{theorem:diminishing_choice:bounds}
    \begin{split}
	&\alpha_k=\frac{\alpha}{(k+k_0)^{\frac{1}{3}}},\enskip \beta_k=1-\frac{\alpha_{k+1}}{\alpha_k}+48L^2\alpha_{k+1}^2,\text{ with }\\
	&\alpha\le\min\left\{\frac{k_0^{\frac{1}{3}}}{32L},\frac{(1-\tilde{\rho})^2k_0^{\frac{1}{3}}}{64L}\right\},
	\end{split}
  \end{equation}
  }for all $k=0,\dots,K-1$, where $k_0\ge\lceil\frac{2}{1-\tilde{\rho}^3}\rceil$.
  Then, it holds that $\beta_k\in(0,1)$ for all $k\ge0$ and that
  {\small
  \begin{equation}\label{theorem:diminishing_convergence:bound}
	\begin{split}
	  &\sum_{k=0}^{K-1}c\alpha_k\Exp\left(\frac{1}{N}\sum_{i=1}^N\norm{P\left(\vec{z}_i^{(k)},\nabla f(\vec{z}_i^{(k)}),\alpha_k\right)}_2^2+\frac{L^2}{N}\fronorm{\z_\perp^{(k)}}\right)\\
	  \le&12\left(\hat{\Phi}^{(0)}-\phi^*\right)+\sum_{k=0}^{K-1}\left(\frac{1}{L^2\alpha_{k+1}}+\frac{48}{L(1-\tilde{\rho})^2}\right)\beta_k^2\hat{\sigma}^2,
	\end{split}
  \end{equation}
  }for some $\hat{\Phi}^{(0)} > \phi^*$ that depends on initialization and $c\triangleq\frac{k_0^{\frac{1}{3}}}{2k_0^{\frac{1}{3}}+(k_0+1)^{\frac{1}{3}}}>\frac{1}{4}$. Note that $\hat{\Phi}^{(k)}$ is defined in~\eqref{lemma:base_change_diminishing:lyapunov} in Appendix~\ref{appendix:theory} for any $k\ge0.$
\end{theorem}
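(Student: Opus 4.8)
\enskip The plan is to adapt the constant–step–size argument to diminishing $\alpha_k$, replacing the fixed Lyapunov coefficients by step–size–dependent ones. I would first record the structural identities used throughout: averaging \eqref{algo:y_update} over agents and using $\y^{(0)}=\w_{T_0}(\d^{(0)})$ gives $\tfrac1N\basis^\top\y^{(k)}=\tfrac1N\basis^\top\d^{(k)}$ for every $k$; averaging \eqref{algo:comm_update} (with $\w_T$) gives $\tfrac1N\basis^\top\z^{(k)}=\tfrac1N\basis^\top\x^{(k)}$ and $\fronorm{\z_\perp^{(k)}}\le\tilde\rho^2\fronorm{\x_\perp^{(k)}}$ by \eqref{spectral_gap_cheby}; and \eqref{algo:x_update} is exactly $\x^{(k+1)}=\prox_{\alpha_k r}(\z^{(k)}-\alpha_k\y^{(k)})$, so the proximal gradient mapping of Definition~\ref{def:prox_grad_map} is naturally evaluated at $\vec{z}_i^{(k)}$. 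I would then verify the parameter claims: since $\alpha_{k+1}<\alpha_k$ we have $1-\beta_k=\tfrac{\alpha_{k+1}}{\alpha_k}-48L^2\alpha_{k+1}^2$, which is positive because $\alpha_k\alpha_{k+1}\le\alpha_k^2\le(32L)^{-2}<(48L^2)^{-1}$ by \eqref{theorem:diminishing_choice:bounds} and $k_0\ge\lceil\tfrac{2}{1-\tilde\rho^3}\rceil$, and which is $<\tfrac{\alpha_{k+1}}{\alpha_k}\le1$, so $\beta_k\in(0,1)$; this also yields $(1-\beta_k)^2\le1-\beta_k\le\tfrac{\alpha_{k+1}}{\alpha_k}$, the single fact that lets the variance term telescope when weighted by $1/\alpha_k$.

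Next I would assemble four one–step estimates. (i) A descent inequality for $\Exp\phi(\barxk)$ from $L$–smoothness and the optimality conditions of the proximal step, producing a negative $\Theta(\alpha_k)$ multiple of $\tfrac1N\sum_i\norm{P(\vec{z}_i^{(k)},\nabla f(\vec{z}_i^{(k)}),\alpha_k)}_2^2$, a negative $\Theta(\alpha_k^{-1})$ multiple of $\norm{\barxknew-\barxk}_2^2$ held in reserve, and error terms controlled by $\tfrac{L^2}{N}\fronorm{\x_\perp^{(k)}}$, $\tfrac1N\fronorm{\y_\perp^{(k)}}$, and the estimator error $\tfrac1N\Exp\fronorm{\d^{(k)}-\nabla F(\x^{(k)})}$ (here $\nabla F(\x^{(k)})$ stacks $\nabla f_i(\blx_i^{(k)})$); along the way I use $\tfrac1N\basis^\top\y^{(k)}=\tfrac1N\basis^\top\d^{(k)}$ to identify the effective gradient seen by $\barxk$ and convert $\fronorm{\x_\perp^{(k)}}$ into $\fronorm{\z_\perp^{(k)}}$ to match Definition~\ref{def:stationarity}. (ii) A consensus contraction $\tfrac1N\fronorm{\x_\perp^{(k+1)}}\le\tfrac{1+\tilde\rho^2}{2}\cdot\tfrac1N\fronorm{\x_\perp^{(k)}}+\tfrac{2\alpha_k^2}{1-\tilde\rho^2}\cdot\tfrac1N\fronorm{\y_\perp^{(k)}}$ from nonexpansiveness of $\prox$, \eqref{spectral_gap_cheby}, and Young's inequality. (iii) A tracking contraction of the same shape, $\tfrac1N\fronorm{\y_\perp^{(k+1)}}\le\tfrac{1+\tilde\rho^2}{2}\cdot\tfrac1N\fronorm{\y_\perp^{(k)}}+\tfrac{2}{1-\tilde\rho^2}\cdot\tfrac1N\Exp\fronorm{\d^{(k+1)}-\d^{(k)}}$, where $\fronorm{\d^{(k+1)}-\d^{(k)}}$ is bounded via \eqref{base_var_reduction} by $\fronorm{\d^{(k)}-\nabla F(\x^{(k)})}$, $\fronorm{\d^{(k+1)}-\nabla F(\x^{(k+1)})}$, $L^2\fronorm{\x^{(k+1)}-\x^{(k)}}$, and $N\beta_k^2\hat\sigma^2$. (iv) The STORM variance recursion from \eqref{base_var_reduction}--\eqref{storm_u_v}: using $\Exp_\xi\vec{v}_i^{(k+1)}=\nabla f_i(\blx_i^{(k+1)})$, $\Exp_\xi\vec{u}_i^{(k+1)}=\nabla f_i(\blx_i^{(k)})$, mean–squared smoothness \eqref{assumption:smoothness}, independence of $\tilde{\vec{v}}_i^{(k+1)}$ from $B_i^{(k+1)}$, and either \eqref{unbiased_assumption} or \eqref{v2_update}, one gets $\tfrac1N\Exp\fronorm{\d^{(k+1)}-\nabla F(\x^{(k+1)})}\le(1-\beta_k)^2\tfrac1N\Exp\fronorm{\d^{(k)}-\nabla F(\x^{(k)})}+(1-\beta_k)^2L^2\tfrac1N\Exp\fronorm{\x^{(k+1)}-\x^{(k)}}+\beta_k^2\hat\sigma^2$. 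In (iii) and (iv) the quantity $\tfrac1N\fronorm{\x^{(k+1)}-\x^{(k)}}$ is split through $\x^{(k+1)}-\x^{(k)}=(\x^{(k+1)}-\z^{(k)})+(\w_T-\identity)\x_\perp^{(k)}$: the second piece lands on consensus errors, while $\x^{(k+1)}-\z^{(k)}=-\alpha_k$ times the proximal gradient step contributes $\alpha_k^2$ times $\tfrac1N\sum_i\norm{P(\vec{z}_i^{(k)},\nabla f(\vec{z}_i^{(k)}),\alpha_k)}_2^2$ (absorbed by the reserve in (i)) plus $\alpha_k^2$ times errors in $\tfrac1N\fronorm{\y_\perp^{(k)}}$, $\tfrac{L^2}{N}\fronorm{\x_\perp^{(k)}}$, and $\tfrac1N\fronorm{\d^{(k)}-\nabla F(\x^{(k)})}$ coming from $\norm{\vec{y}_i^{(k)}-\nabla f(\vec{z}_i^{(k)})}_2$.

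Then I would set $\hat\Phi^{(k)}=\Exp\phi(\barxk)-\phi^*+A_k\tfrac{L^2}{N}\fronorm{\x_\perp^{(k)}}+B_k\tfrac1N\fronorm{\y_\perp^{(k)}}+\tfrac{1}{L^2\alpha_k}\cdot\tfrac1N\Exp\fronorm{\d^{(k)}-\nabla F(\x^{(k)})}$ with coefficient sequences $A_k,B_k$ bounded above and below by absolute constants depending only on $L$ and $1-\tilde\rho$, and add (i)--(iv) so weighted. The weight $1/(L^2\alpha_k)$ together with $(1-\beta_k)^2\le\alpha_{k+1}/\alpha_k$ makes the leading part of (iv) reproduce the same term at index $k$, leaving the genuine residual $\tfrac{\beta_k^2\hat\sigma^2}{L^2\alpha_{k+1}}$; the coupling of (iv) into (iii) through $\fronorm{\d^{(k+1)}-\d^{(k)}}$ and the chaining of (ii)--(iii) leave a second residual of size $O\!\big(\tfrac{1}{L(1-\tilde\rho)^2}\big)\beta_k^2\hat\sigma^2$, and matching the constants in \eqref{theorem:diminishing_convergence:bound} fixes the $48$. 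The two contractions survive the rescaling because $k_0\ge\lceil\tfrac{2}{1-\tilde\rho^3}\rceil$ forces $\big(\tfrac{\alpha_k}{\alpha_{k+1}}\big)^3=1+\tfrac1{k+k_0}\le1+\tfrac{1-\tilde\rho^3}{2}$, whence $\tfrac{\alpha_k}{\alpha_{k+1}}\cdot\tfrac{1+\tilde\rho^2}{2}<1$ (using $\tfrac{1-\tilde\rho^3}{1-\tilde\rho^2}<\tfrac32$), so $A_k,B_k$ can be chosen to dominate the $\Theta(\alpha_k^2)$ and $\Theta(1)$ cross–terms from (ii)--(iv). Carrying out this coefficient bookkeeping — essentially solving a small system of inequalities uniformly in $k$ under a shrinking step size — is the main obstacle; once done, one obtains $\hat\Phi^{(k+1)}\le\hat\Phi^{(k)}-c\alpha_k\big(\tfrac1N\sum_i\norm{P(\vec{z}_i^{(k)},\nabla f(\vec{z}_i^{(k)}),\alpha_k)}_2^2+\tfrac{L^2}{N}\fronorm{\z_\perp^{(k)}}\big)+\big(\tfrac{1}{L^2\alpha_{k+1}}+\tfrac{48}{L(1-\tilde\rho)^2}\big)\beta_k^2\hat\sigma^2$, where $c=\tfrac{k_0^{1/3}}{2k_0^{1/3}+(k_0+1)^{1/3}}>\tfrac14$ (since $(k_0+1)^{1/3}<2k_0^{1/3}$) arises from comparing $\alpha_{k+1}$ with $\alpha_k$ inside the $\Theta(\alpha_k)$ reserve of (i). Summing over $k=0,\dots,K-1$ and discarding $\hat\Phi^{(K)}\ge\Exp\phi(\bar{\blx}^{(K)})-\phi^*\ge0$ yields \eqref{theorem:diminishing_convergence:bound}, with $\hat\Phi^{(0)}$ the initial value of this Lyapunov function.
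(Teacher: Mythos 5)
Your plan reproduces the paper's proof in all essentials: the same Lyapunov function built from $\phi(\barxk)$, the consensus error, the tracking error, and the gradient-estimator error $\fronorm{\vec{R}^{(k)}}$ weighted by $\Theta\bigl(1/(L^2\alpha_k)\bigr)$ (the paper takes $\gamma_3^{(k)}=\tfrac{1}{24NL^2\alpha_{k+1}}$); the same telescoping of the variance recursion via $(1-\beta_k)^2\le\alpha_{k+1}/\alpha_k$; the same use of $k_0\ge\lceil\tfrac{2}{1-\tilde{\rho}^3}\rceil$ (the paper's Lemma~\ref{lemma:diminishing_prep}) to keep the consensus and tracking contractions alive; the same residual $\bigl(\tfrac{1}{L^2\alpha_{k+1}}+\tfrac{48}{L(1-\tilde{\rho})^2}\bigr)\beta_k^2\hat{\sigma}^2$; and the same constant $c$ with the correct justification that $c>\tfrac14$.

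One assertion in your plan is wrong as written and would break the bookkeeping if you held to it: you claim the coefficients $A_k,B_k$ on $\fronorm{\x_\perp^{(k)}}$ and $\fronorm{\y_\perp^{(k)}}$ can be bounded above by absolute constants. The three-point identity for the proximal step, compared against $\barxk$, leaves the positive term $\tfrac{1}{2N\alpha_k}\fronorm{\bar{\x}^{(k)}-\z^{(k)}}\le\tfrac{\tilde{\rho}^2}{2N\alpha_k}\fronorm{\x_\perp^{(k)}}$ on the right-hand side of the descent inequality. A bounded coefficient on $\fronorm{\x_\perp^{(k)}}$ in the Lyapunov function yields only an $O(1)$ negative drift per step for that quantity and cannot absorb this $\Theta(1/\alpha_k)$ term once $\alpha_k$ is small; the consensus coefficient must itself grow like $1/\alpha_k$ (the paper takes $\gamma_2^{(k)}=\tfrac{16}{N(1-\tilde{\rho})\alpha_k}$), and it is precisely this growth that makes the condition $\tfrac{\alpha_k}{\alpha_{k-1}}-\tilde{\rho}\ge\tfrac{1-\tilde{\rho}}{2}$ from the $k_0$ lower bound necessary. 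Since your own ``contractions survive the rescaling'' argument presupposes exactly that growth, this is an internal inconsistency rather than a missing idea: correct the stated scaling of $A_k$ and the plan goes through as in the paper.
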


\textbf{Sample complexity.} Theorem~\ref{theorem:diminishing_convergence} establishes the convergence rate of DEEPSTORM with diminishing step sizes. If we choose $k_0=\lceil\frac{2}{(1-\tilde{\rho})^6}\rceil$ in~\eqref{theorem:diminishing_choice:bounds}, then \ourmethod achieves stochastic $\varepsilon$-stationarity for some iterate $\z^{(\tau)}$, where $\tau$ is chosen according to~\eqref{corollary:diminishing_convergence:probability}, by using $\tilde{\mathcal{O}}\left((1-\tilde{\rho})^{-3}\varepsilon^{-1.5}\right)$ local stochastic gradient computations; this sample complexity is network-dependent. However, by using an initialization technique similar to the case of constant step sizes above and letting the initial batch size be $\bigO{1}$, we can set the Chebyshev mixing rounds to be $T=\lceil\frac{2}{\sqrt{1-\rho}}\rceil$, so that $(1-\tilde{\rho})^{-1}\le \sqrt{2}$. This leads to the network-independent sample complexity reported in Table~\ref{table:related_works}. For a full statement of the complexity results, see Corollary~\ref{corollary:diminishing_convergence} in Appendix~\ref{appendix:diminishing_complexity} and Remark~\ref{remark:complexity_discussion_2}.

\begin{figure*}[!h]
  \centering
  \begin{adjustbox}{max width=\textwidth}
	{\small
	\begin{tabular}{lcccc}
	  \toprule
	  Method    &    Train loss    &    Stationarity     &     \% Non-zeros     &     Test accuracy    \\
	  \midrule
	  \multicolumn{5}{c}{a9a}     \\
	  \midrule
	  DSGT     &     0.3308$\pm$1.272e-4	&	0.0003$\pm$1.819e-4	&	74.18$\pm$160.09e-4	&	84.89$\pm$271.02e-4	\\
	  SPPDM    &    0.5457$\pm$20.014e-4	&	0.001$\pm$2.99e-4	&	46.19$\pm$51.04e-4	&	76.38$\pm$0.0e-4	\\
	  ProxGT-SR-E    &    0.545$\pm$85.017e-4	&	0.0491$\pm$64.099e-4	&	98.04$\pm$15.035e-4	&	76.38$\pm$0.0e-4	\\
      \ourmethod v1-SG    &    \underline{0.3306}$\pm$9.46e-4	&	0.0002$\pm$1.292e-4	&	2.99$\pm$60.066e-4	&	\underline{84.96}$\pm$1235.0e-4	\\
      \ourmethod v1-SVRG    &    0.3308$\pm$7.689e-4	&	\textbf{0.0001}$\pm$0.21278e-4	&	\underline{2.86}$\pm$45.018e-4	&	84.94$\pm$929.04e-4	\\
      \ourmethod v2    &    \textbf{0.3277}$\pm$7.461e-4	&	\textbf{0.0001}$\pm$0.8179e-4	&	\textbf{1.92}$\pm$53.073e-4	&	\textbf{85.11}$\pm$478.03e-4	\\
      \midrule
      \multicolumn{5}{c}{MiniBooNE}     \\
      \midrule
      DSGT     &     0.3735$\pm$3.844e-4	&	0.0003$\pm$2.076e-4	&	81.83$\pm$227.0e-4	&	\underline{84.24}$\pm$202.07e-4	\\
      SPPDM     &    0.5699$\pm$61.016e-4	&	0.0025$\pm$5.565e-4	&	35.32$\pm$77.02e-4	&	72.02$\pm$0.0e-4	\\
      ProxGT-SR-E     &    0.5663$\pm$32.027e-4	&	0.0115$\pm$7.57e-4	&	97.88$\pm$17.017e-4	&	72.02$\pm$0.0e-4	\\
      \ourmethod v1-SG    &   \textbf{0.3637}$\pm$19.015e-4	&	\underline{0.0002}$\pm$0.6464e-4	&	\underline{4.34}$\pm$60.07e-4	&	\underline{84.24}$\pm$1902.0e-4	\\
      \ourmethod v1-SVRG    &    0.3653$\pm$23.054e-4	&	\underline{0.0002}$\pm$0.9716e-4	&	4.42$\pm$65.068e-4	&	84.15$\pm$1974.0e-4	\\
      \ourmethod v2    &    \textbf{0.3637}$\pm$18.046e-4	&	\textbf{0.0001}$\pm$0.4136e-4	&	\textbf{4.2}$\pm$61.073e-4	&	\textbf{84.25}$\pm$1752.0e-4	\\
      \midrule
 	  \multicolumn{5}{c}{MNIST}     \\
      \midrule
      DSGT     &    0.1055$\pm$24.03e-4	&	\underline{0.0024}$\pm$3.554e-4	&	51.05$\pm$896.0e-4	&	97.61$\pm$1346.0e-4	\\
      SPPDM    &    0.1851$\pm$55.065e-4	&	0.0051$\pm$2.058e-4	&	66.81$\pm$616.03e-4	&	95.55$\pm$1488.0e-4	\\
      ProxGT-SR-E     &    1.699$\pm$903.07e-4	&	0.21299$\pm$268.0e-4	&	91.4$\pm$70.087e-4	&	52.25$\pm$41480.0e-4	\\
      \ourmethod v1-SG    &    0.081$\pm$33.014e-4	&	0.0027$\pm$5.376e-4	&	\underline{10.31}$\pm$70.031e-4	&	97.97$\pm$1261.0e-4	\\
      \ourmethod v1-SVRG    &   \underline{0.078}$\pm$34.022e-4	&	0.0031$\pm$7.366e-4	&	10.99$\pm$82.095e-4	&	\underline{98.08}$\pm$1485.0e-4	\\
      \ourmethod v2    &    \textbf{0.0768}$\pm$29.095e-4	&	\textbf{0.0016}$\pm$1.83e-4	&	\textbf{7.36}$\pm$50.07e-4	&	\textbf{98.15}$\pm$659.04e-4	\\
	  \bottomrule
	\end{tabular}}
	\end{adjustbox}
 	\caption{Comparisons of different methods by running them with the same number of data passes. Bold values indicate the best results and underlined values indicate the second best.}
    \label{fig:numerical_table}
  \setlength\tabcolsep{1pt}
  	\begin{adjustbox}{width=0.97\textwidth}
  \begin{tabular}{cccc}
	
	\includegraphics[width=0.21\linewidth]{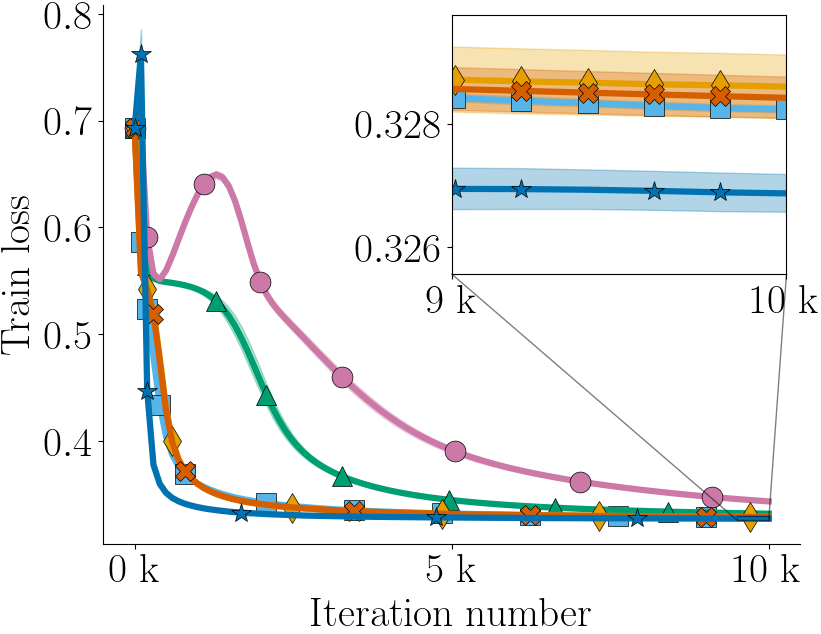} &
	
	\includegraphics[width=0.21\linewidth]{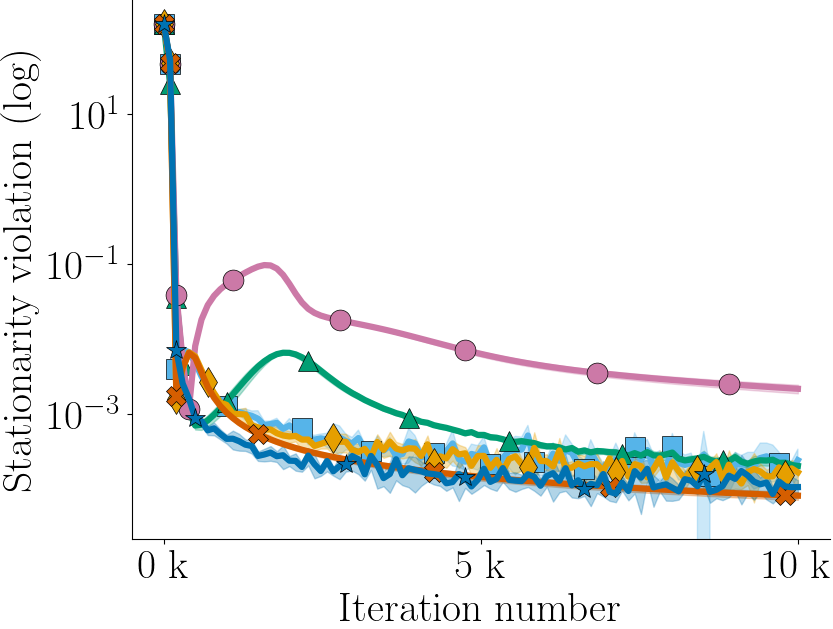} &
	
	\includegraphics[width=0.21\linewidth]{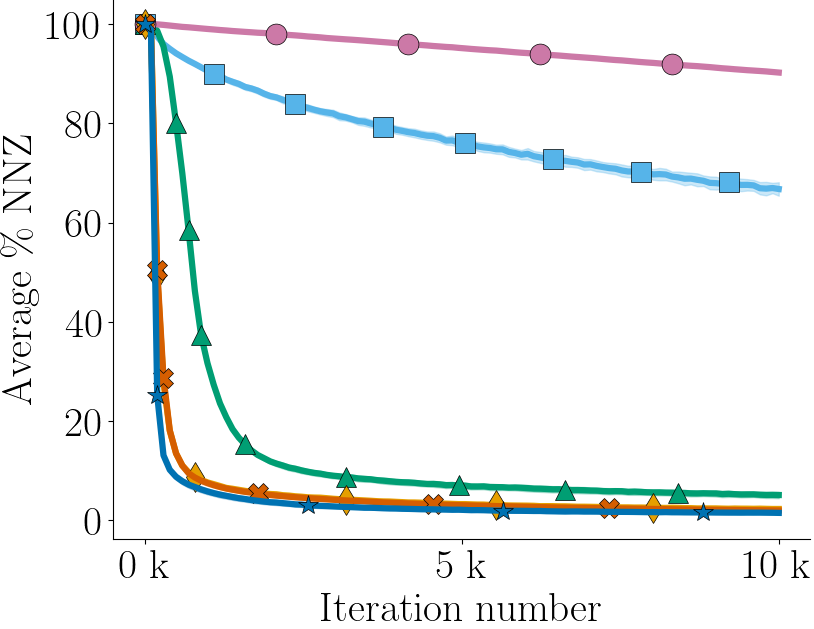} & 
	
	\includegraphics[width=0.21\linewidth]{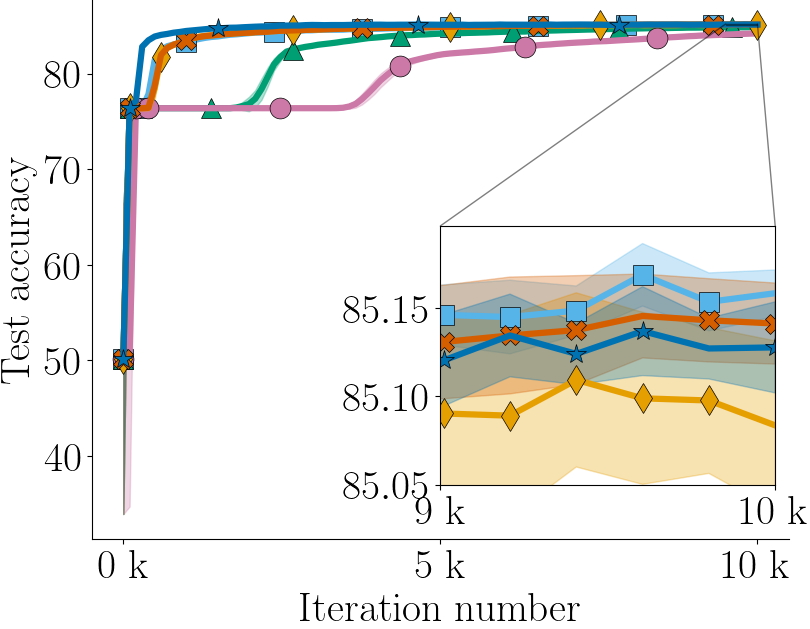} \\
	
	\includegraphics[width=0.21\linewidth]{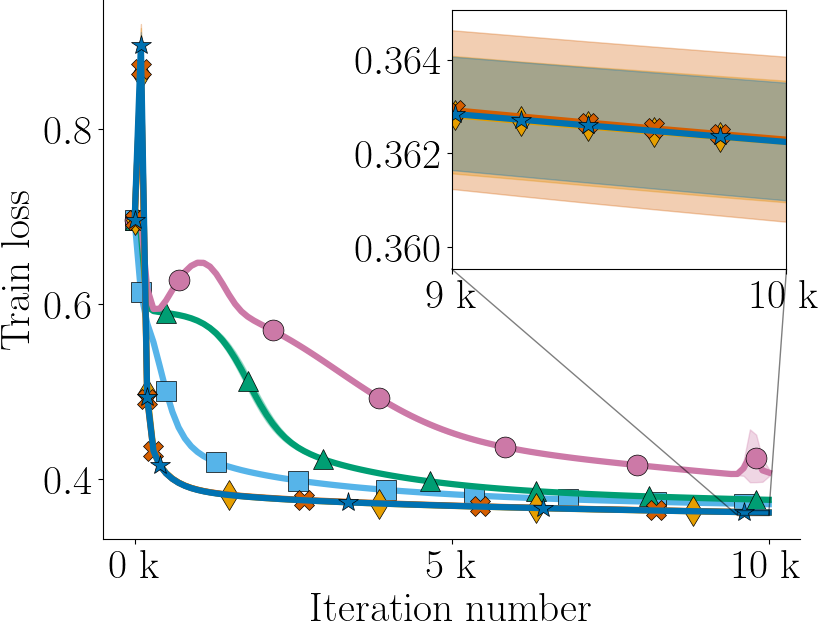} &
	
	\includegraphics[width=0.21\linewidth]{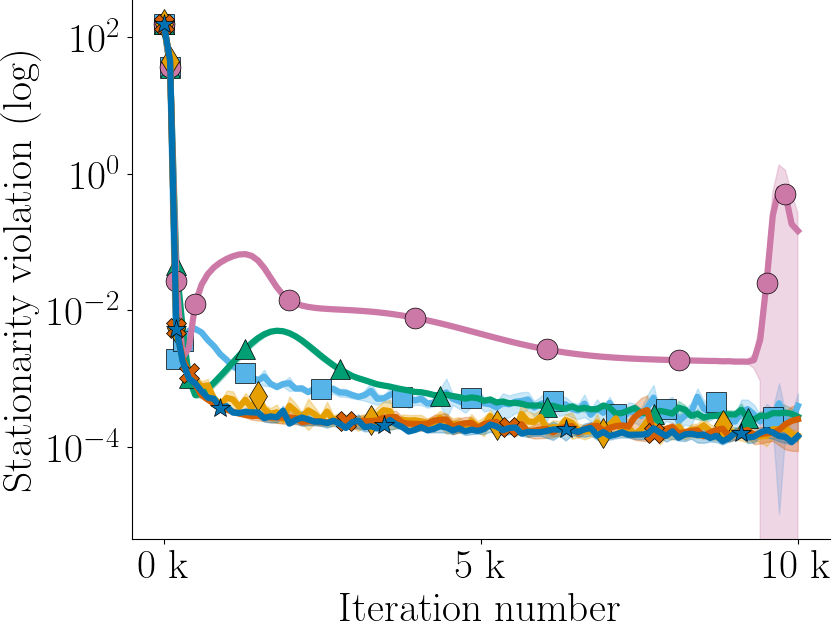} &
	
	\includegraphics[width=0.21\linewidth]{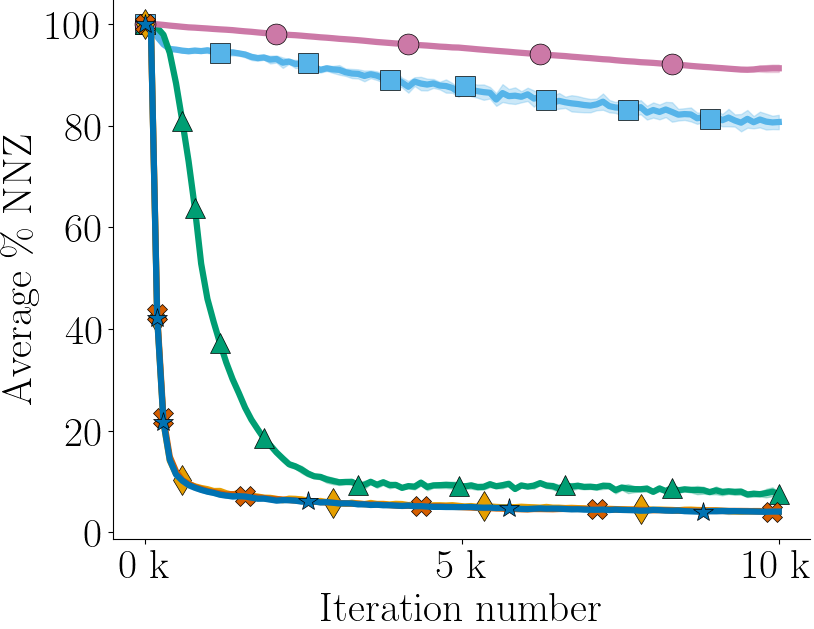} & 
	
	\includegraphics[width=0.21\linewidth]{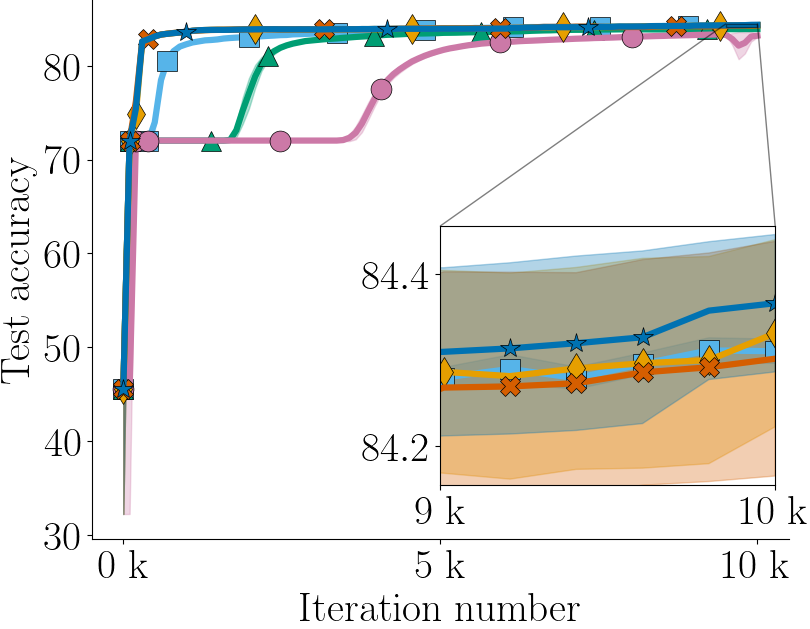} \\
	
	\includegraphics[width=0.21\linewidth]{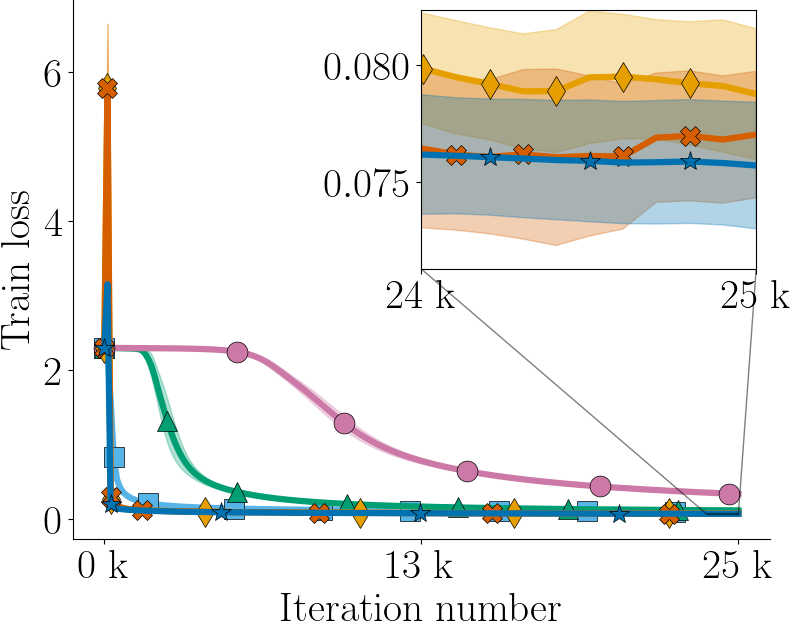} &
	
	\includegraphics[width=0.21\linewidth]{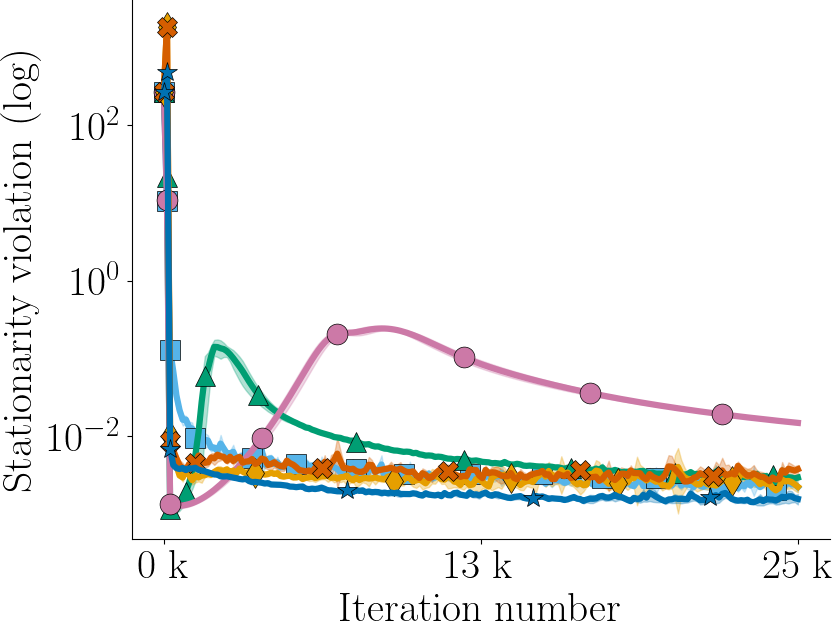} &
	
	\includegraphics[width=0.21\linewidth]{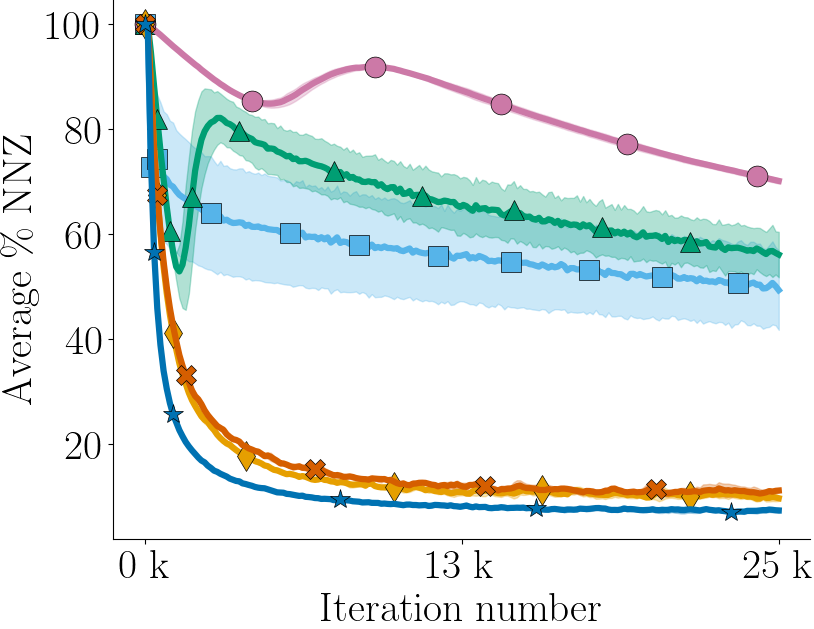} & 
	
	\includegraphics[width=0.21\linewidth]{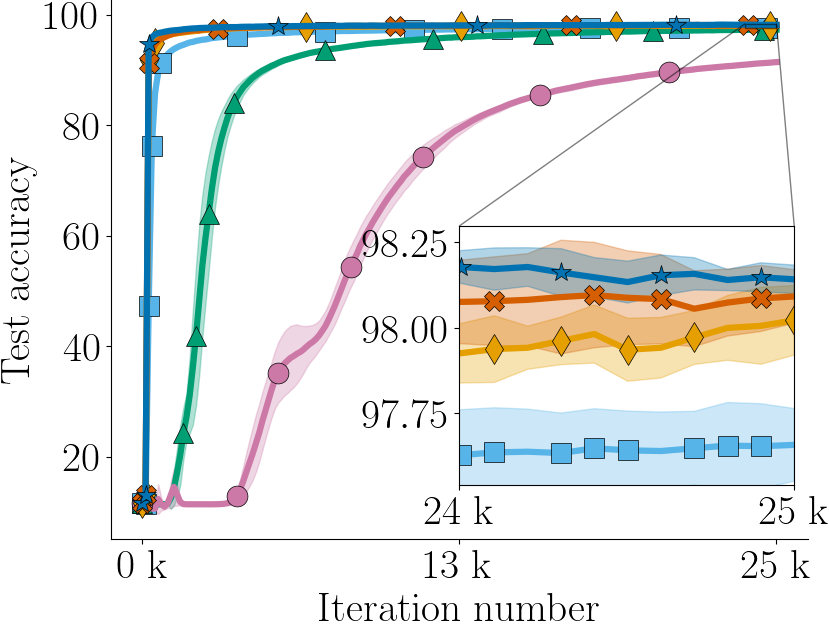} \\
	
	\multicolumn{4}{c}{\includegraphics[width=0.85\linewidth]{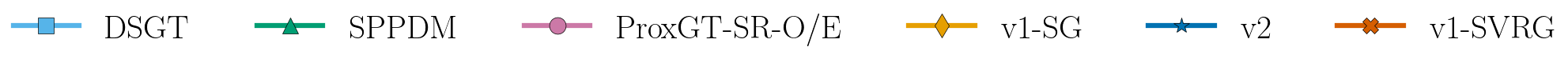}}
	
  \end{tabular}
  \end{adjustbox}
  
  \caption{Comparison of different methods by running them with the same number of iterations. From top to bottom: a9a, MiniBooNE, MNIST. From left to right: training loss, stationarity violation, average percentage non-zeros, testing accuracy. The shaded regions indicate standard deviations (with some being small and unnoticeable).}
  
  \label{fig:numerical}
  
\end{figure*}

\section{Experiments}\label{sec:numerical}

In this section, we empirically validate the convergence theory of \ourmethod and demonstrate its effectiveness in comparison with representative decentralized methods. We compare all versions of \ourmethod with DSGT~\cite{songtao19,zhang20gradtrack,koloskova21,xin21_dsgt}, SPPDM~\cite{wang21}, and ProxGT-SR-O/E~\cite{xin21}. DSGT uses gradient tracking but it is not designed for non-smooth objectives; nevertheless, it outperforms strong competitors (e.g., D-PSGD~\cite{lian17} and D$^{2}$~\cite{tang18}) in practice~\cite{zhang20gradtrack,xin21_dsgt}. SPPDM is a primal-dual method, but it does not utilize gradient tracking and its convergence theory requires a large batch size. ProxGT-SR-O/E is a double-loop algorithm, which requires using a mega-batch to compute the (stochastic) gradient at each outer iteration.  All experiments are conducted using the AiMOS~\footnote{See:~\url{https://cci.rpi.edu/aimos}} supercomputer with eight NVIDIA Tesla V100 GPUs in total, with code implemented in PyTorch (v1.6.0) and OpenMPI (v3.1.4).

\textbf{Problems.} We conduct tests on three classification problems. Each local agent $i$ has the objective $
\phi_i(\blx_i)=\frac{1}{M}\sum_{j=1}^M\ell\left(g\left(\blx_i,\vec{a}_j\right),\vec{b}_j\right)+\lambda\norm{\blx_i}_1,
$ where $g(\blx,\vec{a})$ is the output of a neural network with parameters $\blx$ on data $\vec{a}$, and $\ell$ is the cross-entropy loss function between the output and the true label $\vec{b}$. The data is uniformly randomly split among the agents, each obtaining $M$ training examples. The $L_1$ regularization promotes sparsity of the trained network. The regularization strength $\lambda$ is set to 0.0001 following general practice.

\textbf{Data sets and neural networks.} The three data sets we experiment with are summarized in Table~\ref{table:dataset_information} in Appendix~\ref{appendix:numerical}. Two of them are tabular data and we use the standard multi-layer perceptron for $g$ (one hidden layer with 64 units). The other data set contains images; thus, we use a convolutional neural network. Both neural networks use the tanh activation to satisfy the smoothness condition of the objective function.

\textbf{Communication graphs.} Each data set is paired with a different communication graph, indicated by, and visualized in, Table~\ref{table:dataset_information} in Appendix~\ref{appendix:numerical}. For the ladder and random graphs, the mixing matrix is set as $\W=\identity-\gamma\vec{L}$, where $\gamma$ is reciprocal of the maximum eigenvalue of the combinatorial Laplacian $\vec{L}$. For the ring graph, self-weighting and neighbor weights are set to be $\frac{1}{3}$.

\textbf{Performance metrics.} We evaluate on four metrics: training loss, stationarity violation, solution sparsity, and test accuracy. Further, we compare the methods with respect to data passes and algorithm iterations, which reflect the sample complexity and communication complexity, respectively. Note that for each iteration, all methods except SPPDM communicate two variables. For the training loss, stationarity violation, and test accuracy, we evaluate on the average solution $\bar{\blx}$. The stationarity violation is defined as $\norm{\bar{\blx}-\hbox{prox}_{r}\left(\bar{\blx}-\nabla f(\bar{\blx})\right)}_2^2+\sum_{i=1}^N\norm{\blx_i-\bar{\blx}}_2^2$, which measures both optimality and consensus. For sparsity, we use the average percentage of non-zeros in each $\blx_i$ prior to local communication.

\textbf{Protocols.} For hyperparameter selection, see Appendix~\ref{appendix:numerical}. We perform ten runs with different starting points for each dataset. In several runs for the MNIST dataset, DSGT and SPPDM converge to solutions with $\ll 1$\% non-zero entries, but the training loss and test accuracy are not competitive at all. We remove these runs and keep only the five best runs for reporting the (averaged) performance.

\textbf{Results.} Figure~\ref{fig:numerical_table} summarizes the results for all performance metrics, by using the same number of data passes for all methods when convergence has been observed. For a9a and MiniBooNE, the results are averaged over passes 80 to 100; while for MNIST, over passes 180 to 200. Figure~\ref{fig:numerical} compares different methods by using the same number of algorithm iterations.

Overall, we see that \ourmethod (all variants) generally yields a lower training loss and significantly fewer non-zeros in the solution than the other decentralized algorithms. This observation suggests that \ourmethod indeed solves the optimization problem~\eqref{dco_problem} much more efficiently in terms of both data passes and iterations. Moreover, the test accuracy is also highly competitive, concluding the practical usefulness of \ourmethod.

\section{Conclusion}\label{sec:conclusion}

We have presented a novel decentralized algorithm for solving the nonconvex stochastic composite problem~\eqref{dco_problem} by leveraging variance reduction and gradient tracking. It is the first such work that achieves optimal sample complexity for this class of problems by using $\bigO{1}$ batch sizes. Our algorithm is a framework with an open term (see~\eqref{base_var_reduction}), for which we analyze two examples that allow the framework to achieve network-independent complexity bounds, suggesting no sacrifice over centralized variance reduction methods. Our proof technique can be used to analyze more designs of the open term. While our work is one of the few studies on the nonconvex stochastic composite problem~\eqref{dco_problem}, our analysis is for the synchronous setting with a static communication graph. Analysis (or adaptation of the algorithm) for asynchronous or time-varying settings is an avenue of future investigation.

\section{Acknowledgments}\label{sec:ack}

This work was supported by the Rensselaer-IBM AI Research Collaboration, part of the IBM AI Horizons Network, NSF grants DMS-2053493 and DMS-2208394, and the ONR award N00014-22-1-2573.

{\fontsize{9.0pt}{10.0pt} \selectfont
\bibliography{aaai23.bib}}

\onecolumn
\appendix
\numberwithin{equation}{section}
\numberwithin{lemma}{section}
\numberwithin{remark}{section}
\numberwithin{algorithm}{section}
\section{Reproducibility}\label{appendix:numerical}
\textbf{Data sets and communication graphs.} The data sets and communication graphs are summarized/visualized in Table~\ref{table:dataset_information}.

\begin{table*}[!h]
  \centering
  \begin{tabular}[b]{lccccc}
	\toprule
	Dataset   &    Train     &    Test     &   Features  & Model & Graph  \\
	\midrule
	a9a       & 32{,}561  & 16{,}281 & 123   & MLP   & Ladder \\
	MiniBooNE & 100{,}000 & 30{,}064 & 50    & MLP   & Ring   \\
	MNIST     & 60{,}000  & 10{,}000 & 784   & LENET & Random \\
	\bottomrule
  \end{tabular} \hfill
  \includegraphics[width=0.12\linewidth]{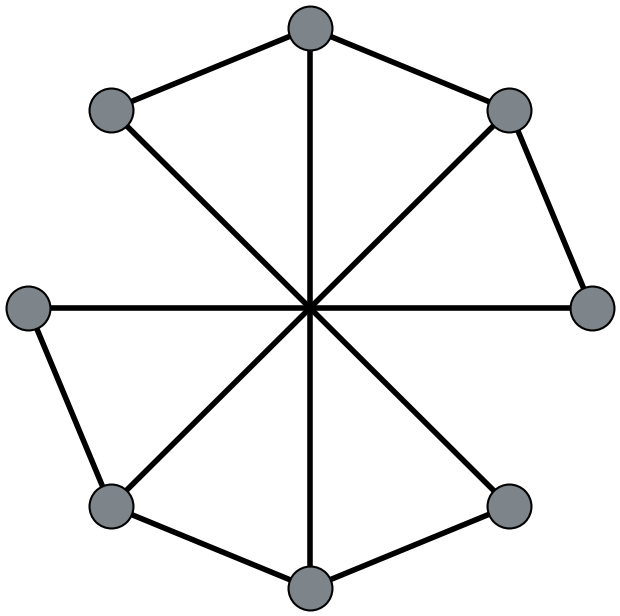}
  \includegraphics[width=0.12\linewidth]{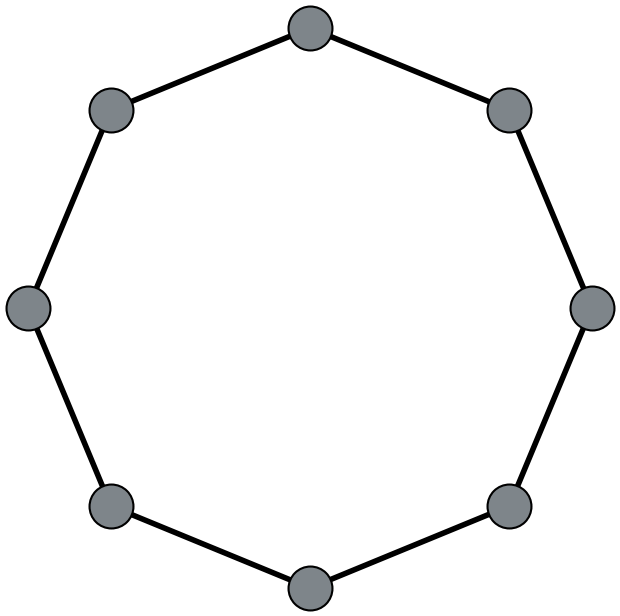}
  \includegraphics[width=0.12\linewidth]{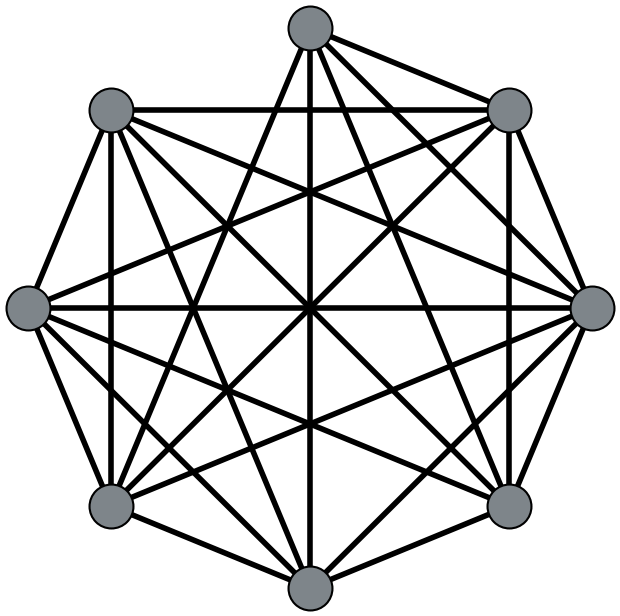}
  \caption{Summary of data sets; all are downloaded from \url{https://www.openml.org}. Graphs from left to right: ladder, ring, and random.}
    \label{table:dataset_information}
\end{table*}

\textbf{Code.} Code for all numerical experiments is available at~\url{https://github.com/gmancino/DEEPSTORM}.

\textbf{Hyperparameter selection.} We choose the batch size according to theoretical guidance, while observing reasonably good performance. For all methods but SPPDM, we set the batch size to be 64, 128, and 64 for a9a, MiniBooNE, and MNIST, respectively. For SPPDM, the respective sizes are 512, 1024, and 128.

For all variants of \ourmethod, we set the number of communication rounds to be $T=1$ such that $\w_1=\W$. We use a diminishing step size as in~\eqref{theorem:diminishing_choice:bounds} with $k_0=\lceil\frac{2}{1-\rho^3}\rceil$ and $\beta_k=1-\frac{\alpha_{k+1}}{\alpha_k}+\beta\alpha_{k+1}^2$ with $\beta<\frac{1}{\alpha_0\alpha_1}$, such that $\beta_0<1$. Such a choice ensures that $\beta_k\in(0,1)$ for all $k$. For the \eqref{v3_update} variant, we compute the snapshot gradient every four passes, by using all local data for a9a and MiniBooNE; whereas for MNIST, we compute the snapshot gradient at the end of every pass, by using 20\% of the local data.

For DSGT, we set the step size to be $\alpha_k=\frac{\alpha}{\sqrt{k+1}}$ for all $k$, according to~\cite{songtao19,xin21_dsgt}. For SPPDM, we follow the choices of many hyperparameters used in the original paper and only tune $c\in\{0.1,1\}$ and $\alpha$. For ProxGT-SR-O/E, we tune the step size $\alpha$ and the frequency of communicating the full local gradient, $q$. We find that $q=32$ yields the most stable results for a9a and MiniBooNE and $q=64$ performs the best for MNIST. For all these methods, $\alpha$ is tuned from $\{10.0, 5.0, 1.0, 0.1, 0.01, 0.005, 0.001\}$. We choose the Pareto optimal $\alpha$ that balances a small stationarity violation and a high test accuracy.

\section{Chebyshev acceleration}\label{appendix:chebyshev}

The Chebyshev mixing protocol~\cite{auzinger11} can be summarized in the following pseudo-code.

\begin{algorithm}[!h]
\caption{Chebyshev mixing protocol $\w_T(\B)$}
\label{algo:cheby}
\textbf{Input}: Mixing matrix $\W$, input $\B$, rounds $T$
\begin{algorithmic}[1] 
\STATE Let $\B_0=\B$ and $\B_1=\W\B_0$
\STATE Compute step sizes $\mu_{0}=1,\mu_{1}=\frac{1}{\rho}$
\FOR{$t=0,\dots,T-1$}
\STATE $\mu_{t+1}\gets\frac{2}{\rho}\mu_{t}-\mu_{t-1}$
\STATE $\B_{t+1}\gets\frac{2\mu_t}{\rho\mu_{t+1}}\W\B_t-\frac{\mu_{t-1}}{\mu_{t+1}}\B_{t-1}$
\ENDFOR
\end{algorithmic}
\textbf{Output}: $\B_T=\w_T(\B_0)$
\end{algorithm}

This method is accompanied with the following convergence result, relating the spectrum of $\w_T$ to the spectrum of $\W$. For a proof, see~\cite{mancino21}.

\begin{lemma}\label{lemma:chebyshev}
  The output of Algorithm~\ref{algo:cheby} can be denoted as $\B_T = \w_T\vec {B}$, where $\w_T$ is a degree-$T$ polynomial of $\W$ and it satisfies Assumptions~\ref{assumption:mixing_matrix}(ii)--(iv). Additionally, we use the bar notation to mean replacing each row of a matrix by the average of its rows; that is, $\bar{\B}=\frac{1}{N}\basis\basis^{\top}\B$. Then, $\bar{\B}_T=\bar{\B}$ for all $T$ and
  \begin{equation}\label{lemma:chebyshev:bound}
	\norm{\B_T-\bar{\B}}_F\le2\left(1-\sqrt{1-\rho}\right)^T\norm{\B-\bar{\B}}_F.
  \end{equation}
\end{lemma}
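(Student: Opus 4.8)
The plan is to identify the two coupled recursions in Algorithm~\ref{algo:cheby} with normalized Chebyshev polynomials of the first kind in $\W$, and then read off every claim from standard spectral and extremal properties of those polynomials. Write $\mathcal{C}_t$ for the degree-$t$ Chebyshev polynomial of the first kind ($\mathcal{C}_0\equiv1$, $\mathcal{C}_1(x)=x$, $\mathcal{C}_{t+1}(x)=2x\,\mathcal{C}_t(x)-\mathcal{C}_{t-1}(x)$). First I would observe that the scalar recursion for $\mu_t$ is exactly $\mu_t=\mathcal{C}_t(1/\rho)$. Then, setting $\B_t=P_t(\W)\B$ and substituting into the matrix update, an induction shows $P_t(x)=\mathcal{C}_t(x/\rho)/\mathcal{C}_t(1/\rho)$, the inductive step being the Chebyshev three-term recurrence evaluated at $x/\rho$ and divided by $\mu_{t+1}$. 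Thus $\w_T(\B)=P_T(\W)\B$ for a degree-$T$ polynomial $P_T$ with $P_T(1)=1$. Symmetry of $\w_T$ is then immediate since $\W=\W^\top$; and because $\W\basis=\basis$ by Assumption~\ref{assumption:mixing_matrix}(iii), we get $\basis^\top\w_T=P_T(1)\basis^\top=\basis^\top$, hence $\bar{\B}_T=\avg P_T(\W)\B=\avg\B=\bar{\B}$ for all $T$.

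Next I would analyze the spectrum. Diagonalizing the symmetric $\W$ and using Assumption~\ref{assumption:mixing_matrix}, its eigenvalues are $1=\lambda_1>\lambda_2\ge\cdots\ge\lambda_N>-1$ with $|\lambda_i|\le\rho$ for $i\ge2$ and $\lambda_1=1$ simple with eigenvector $\propto\basis$; the eigenvalues of $\w_T$ are therefore $P_T(\lambda_i)$. For $i\ge2$, $|\lambda_i/\rho|\le1$ forces $|\mathcal{C}_T(\lambda_i/\rho)|\le1$ (Chebyshev polynomials are bounded by $1$ on $[-1,1]$), while $\mathcal{C}_T(1/\rho)>1$ since $1/\rho>1$; hence $|P_T(\lambda_i)|\le1/\mathcal{C}_T(1/\rho)<1$. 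This shows $1$ remains a simple eigenvalue of $\w_T$ with eigenspace $\mathrm{span}\{\basis\}$, i.e. Assumption~\ref{assumption:mixing_matrix}(iii)--(iv) hold for $\w_T$, and that $\tilde\rho=\norm{\w_T-\avg}_2=\max_{i\ge2}|P_T(\lambda_i)|\le1/\mathcal{C}_T(1/\rho)$, recovering \eqref{spectral_gap_cheby}.

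For the Frobenius bound, let $M\defined\w_T-\avg=P_T(\W)-\avg$. Using $\W\basis=\basis$, $P_T(1)=1$, and $\avg^2=\avg$, one gets $M\avg=P_T(\W)\avg-\avg=\avg-\avg=0$, so $M\bar{\B}=0$ and therefore
\[
\norm{\B_T-\bar{\B}}_F=\norm{M\B}_F=\norm{M(\B-\bar{\B})}_F\le\norm{M}_2\,\norm{\B-\bar{\B}}_F=\tilde\rho\,\norm{\B-\bar{\B}}_F.
\]
It then remains to upper bound $\tilde\rho$. Using the closed form $\mathcal{C}_T(x)=\tfrac12[(x+\sqrt{x^2-1})^T+(x-\sqrt{x^2-1})^T]\ge\tfrac12(x+\sqrt{x^2-1})^T$ for $x>1$ at $x=1/\rho$, together with $(x+\sqrt{x^2-1})^{-1}=x-\sqrt{x^2-1}=\frac{1-\sqrt{1-\rho^2}}{\rho}$, gives $\tilde\rho\le2\big(\frac{1-\sqrt{1-\rho^2}}{\rho}\big)^T$. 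Finally the elementary inequality $\frac{1-\sqrt{1-\rho^2}}{\rho}\le1-\sqrt{1-\rho}$ on $(0,1)$ — which, substituting $u=\sqrt{1-\rho}$ and clearing denominators, reduces to $(1-u)^3(1+u)\ge0$ — delivers $\norm{\B_T-\bar{\B}}_F\le2(1-\sqrt{1-\rho})^T\norm{\B-\bar{\B}}_F$.

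The conceptual crux is the first step: recognizing the opaque coupled recursions for $\mu_t$ and $\B_t$ as the normalized Chebyshev iteration $P_t=\mathcal{C}_t(\cdot/\rho)/\mathcal{C}_t(1/\rho)$; once this is in hand, the remaining work is routine bookkeeping with well-known Chebyshev facts (sup-norm bound on $[-1,1]$, hyperbolic closed form outside $[-1,1]$, strict growth past $1$) plus the single scalar inequality above. A small technical point to handle carefully is that $\avg$ is the eigenvalue-$1$ spectral projector of $\W$ and commutes with $P_T(\W)$, which is precisely what lets the operator-norm contraction on $\mathrm{span}\{\basis\}^\perp$ transfer to the stated Frobenius bound on $\B-\bar{\B}$.
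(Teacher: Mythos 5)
Your proposal is correct. Note that the paper does not actually prove this lemma in-house: it defers to~\cite{mancino21} for the statement and to Corollary~6.1 of~\cite{auzinger11} for the decay rate, so you have supplied a self-contained argument where the paper only cites. Your route is the standard one and matches what those references do: identify $\mu_t=\mathcal{C}_t(1/\rho)$ and $\B_t=P_t(\W)\B$ with $P_t(x)=\mathcal{C}_t(x/\rho)/\mathcal{C}_t(1/\rho)$ by induction on the three-term recurrence, get $P_T(1)=1$ (hence $\bar{\B}_T=\bar{\B}$ and preservation of Assumptions~2(ii)--(iii)), bound $\tilde{\rho}\le 1/\mathcal{C}_T(1/\rho)$ from $\abs{\mathcal{C}_T}\le1$ on $[-1,1]$, and convert to the Frobenius bound via the commuting projector $\frac{1}{N}\basis\basis^\top$. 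All the individual steps check out, including the final scalar inequality $\frac{1-\sqrt{1-\rho^2}}{\rho}\le 1-\sqrt{1-\rho}$, which does reduce to $(1-u)^3(1+u)\ge0$ under $u=\sqrt{1-\rho}$; the only cosmetic caveat is that the strict contraction $1/\mathcal{C}_T(1/\rho)<1$ requires $T\ge1$, which is the only regime the algorithm uses.
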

The analysis in Appendix~\ref{appendix:theory} uses a sufficiently large degree $T$ such that $\tilde{\rho}$ as defined in~\eqref{spectral_gap_cheby} is bounded by a constant, independent of the communication graph. For this, by Corollary 6.1 in~\cite{auzinger11}, it holds that
\begin{equation}\label{spectral_gap_cheby_bound}
  \tilde{\rho}\le2\left(1-\sqrt{1-\rho}\right)^T.
\end{equation}
Hence, by the proof of Theorem 4 in~\cite{mancino21}, we see that when $T=\lceil\frac{2}{\sqrt{1-\rho}}\rceil$, we obtain
\begin{equation}\label{appendix:cheby:upper_bound}
  (1-\tilde{\rho})^2 \ge \frac{1}{2}.
\end{equation}

\section{Convergence results}\label{appendix:theory}

We denote the \emph{global} objective function and the corresponding smooth part to be
	\begin{equation}\label{global_obj}
		\phi\triangleq\frac{1}{N}\sum_{i=1}^N\phi_i\quad\text{ and }\quad f\triangleq\frac{1}{N}\sum_{i=1}^Nf_i
	\end{equation}
respectively. Crucially, our analysis relies on bounding the difference between the local first-order estimators given in~\eqref{base_var_reduction} and the true local gradient; namely we define
	\begin{equation}\label{error_term}
		\vec{r}_i^{(k)}\triangleq\vec{d}_i^{(k)}-\nabla f_i(\blx_i^{(k)}).
	\end{equation}
Additionally, we define the following matrix terms to be used throughout the analysis,
	\begin{align}
		\bar{\vec{A}}&\defined\avg\vec{A},\enskip\forall\vec{A}\in\R^{N\times p},\label{average_matrix}\\
		\x_{\perp}&\defined\x-\bar{\x},\label{x_perp}\\
		\y_{\perp}&\defined\y-\bar{\y},\label{y_perp}\\
		\vec{R}&\defined\d-\nabla F(\x),\label{matrix_error}
	\end{align}
where $\nabla F$ is the gradient of the smooth part of the objective function written in the following matrix form
	\begin{equation}\label{objective_grad}
		\nabla F(\x)\defined\mymatrix{\nabla f_1(\vecx_1)\\\vdots\\\nabla f_N(\vecx_N)}\in\R^{N\times p}.
	\end{equation}
Before beginning with the analysis, we present two preparatory Lemmas. The first is standard in the literature~\cite{ghadimi16}.

\begin{lemma}\label{lemma:prox}
	Let $r:\R^{1\times p}\to\R$ be a closed, convex function, then for any $\vec{a},\vec{b}\in\R^{1\times p},$ it holds that
		\begin{equation}\label{lemma:prox:bound}
			\norm{\hbox{\normalfont prox}_r\left(\vec{a}\right)-\hbox{\normalfont prox}_r\left(\vec{b}\right)}_2\le\norm{\vec{a}-\vec{b}}_2.
		\end{equation}
\end{lemma}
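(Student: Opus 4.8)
\textbf{Proof proposal for Lemma~\ref{lemma:prox} (non-expansiveness of the proximal operator).}

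The plan is to use the first-order optimality conditions that define the proximal operator, together with the monotonicity of the subdifferential of a convex function. First I would set $\vec{a}^+\triangleq\hbox{prox}_r(\vec{a})$ and $\vec{b}^+\triangleq\hbox{prox}_r(\vec{b})$. Since $r$ is closed and convex, the minimizing objective $r(\vec{u})+\tfrac12\|\vec{u}-\vec{a}\|_2^2$ is strongly convex, so $\vec{a}^+$ is well-defined and uniquely characterized by the inclusion $\vec{a}-\vec{a}^+\in\partial r(\vec{a}^+)$; similarly $\vec{b}-\vec{b}^+\in\partial r(\vec{b}^+)$.

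Next I would invoke monotonicity of $\partial r$: for any two points $\vec{x},\vec{y}$ and subgradients $\vec{g}_x\in\partial r(\vec{x})$, $\vec{g}_y\in\partial r(\vec{y})$, one has $\langle \vec{g}_x-\vec{g}_y,\ \vec{x}-\vec{y}\rangle\ge0$. Applying this with $\vec{x}=\vec{a}^+$, $\vec{y}=\vec{b}^+$, $\vec{g}_x=\vec{a}-\vec{a}^+$, $\vec{g}_y=\vec{b}-\vec{b}^+$ gives
\[
\langle (\vec{a}-\vec{a}^+)-(\vec{b}-\vec{b}^+),\ \vec{a}^+-\vec{b}^+\rangle\ge0,
\]
which rearranges to $\langle \vec{a}-\vec{b},\ \vec{a}^+-\vec{b}^+\rangle \ge \|\vec{a}^+-\vec{b}^+\|_2^2$. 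Then by Cauchy--Schwarz, $\|\vec{a}^+-\vec{b}^+\|_2^2 \le \|\vec{a}-\vec{b}\|_2\,\|\vec{a}^+-\vec{b}^+\|_2$, and dividing through by $\|\vec{a}^+-\vec{b}^+\|_2$ (the case where this is zero being trivial) yields $\|\vec{a}^+-\vec{b}^+\|_2\le\|\vec{a}-\vec{b}\|_2$, which is exactly~\eqref{lemma:prox:bound}.

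There is essentially no serious obstacle here; the only points requiring mild care are the well-definedness and uniqueness of the proximal map (handled by strong convexity of the regularized objective) and the degenerate case $\vec{a}^+=\vec{b}^+$ in the final division. Alternatively, one could cite this as a textbook fact (e.g., the standard firm non-expansiveness of proximal operators, which in particular implies $1$-Lipschitz continuity), since the statement notes it is standard in the literature; but the short monotonicity argument above is self-contained and worth including.
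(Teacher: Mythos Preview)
Your argument is correct and is the standard monotonicity-plus-Cauchy--Schwarz proof of the non-expansiveness (in fact, firm non-expansiveness) of the proximal operator. There is no gap.

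As for comparison: the paper does not actually prove this lemma. It merely states it as ``standard in the literature'' with a citation to~\cite{ghadimi16} and moves on. So your self-contained derivation goes beyond what the paper provides. Your closing remark about citing it as a textbook fact is thus exactly what the paper does; either choice (cite or include the short proof) is fine, and your written-out argument adds value for a reader who wants the paper to be self-contained.
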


\begin{lemma}\label{lemma:y_d_relation}
		For all $k\ge0$,
			\begin{equation}\label{lemma:y_d_relation:bound}
				\bar{\bly}^{(k)}=\bar{\vec{d}}^{(k)}.
			\end{equation}
\end{lemma}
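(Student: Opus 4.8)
The plan is to prove $\bar{\bly}^{(k)}=\bar{\vec{d}}^{(k)}$ by induction on $k$, using the update rule \eqref{algo:y_update} for the gradient tracking variable together with the key fact that the Chebyshev communication operator $\w_T$ preserves row averages (established in Lemma~\ref{lemma:chebyshev}, namely $\bar{\B}_T=\bar{\B}$). Recall that averaging a matrix is the linear operation $\bar{\vec{A}}=\frac{1}{N}\basis\basis^\top\vec{A}$, so it commutes with matrix addition and subtraction; the only nontrivial ingredient is that it also commutes with the communication step.

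For the base case $k=0$, Algorithm~\ref{algo:} sets $\y^{(0)}=\w_{T_0}(\d^{(0)})$. Applying the averaging identity from Lemma~\ref{lemma:chebyshev} (with $T_0$ mixing rounds in place of $T$) gives $\bar{\y}^{(0)}=\overline{\w_{T_0}(\d^{(0)})}=\bar{\d}^{(0)}$, which is exactly $\bar{\bly}^{(0)}=\bar{\vec{d}}^{(0)}$ when read row-wise.

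For the inductive step, assume $\bar{\bly}^{(k)}=\bar{\vec{d}}^{(k)}$, equivalently $\bar{\y}^{(k)}=\bar{\d}^{(k)}$ in matrix form. The update \eqref{algo:y_update} (as implemented in Algorithm~\ref{algo:}) reads $\y^{(k+1)}=\w_T\!\left(\y^{(k)}+\d^{(k+1)}-\d^{(k)}\right)$. Taking row averages and using Lemma~\ref{lemma:chebyshev} to pull the average through $\w_T$, then linearity of averaging, yields
\begin{equation*}
\bar{\y}^{(k+1)}=\overline{\y^{(k)}+\d^{(k+1)}-\d^{(k)}}=\bar{\y}^{(k)}+\bar{\d}^{(k+1)}-\bar{\d}^{(k)}=\bar{\d}^{(k)}+\bar{\d}^{(k+1)}-\bar{\d}^{(k)}=\bar{\d}^{(k+1)},
\end{equation*}
where the third equality invokes the induction hypothesis. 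Reading this row-wise gives $\bar{\bly}^{(k+1)}=\bar{\vec{d}}^{(k+1)}$, completing the induction.

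There is essentially no hard part here; the result is a standard invariant of gradient tracking schemes. The only point requiring a little care is making sure the averaging operator genuinely commutes with the generalized communication operator $\w_T$ — this is precisely why Assumption~\ref{assumption:mixing_matrix}(ii)--(iv) (inherited by $\w_T$) and the explicit statement $\bar{\B}_T=\bar{\B}$ in Lemma~\ref{lemma:chebyshev} are needed; with the plain mixing matrix $\W$ it follows from the null-space/symmetry properties since $\basis^\top\W=\basis^\top$. One should also note that the base case uses $\w_{T_0}$ rather than $\w_T$, but Lemma~\ref{lemma:chebyshev} applies verbatim for any number of rounds, so this causes no difficulty.
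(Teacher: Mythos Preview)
Your proof is correct and follows essentially the same induction argument as the paper. The only cosmetic difference is that the paper justifies the average-preservation step by noting directly that $\basis^\top\w_T=\basis^\top$ (since $\w_T$ is a polynomial in $\W$), whereas you invoke the statement $\bar{\B}_T=\bar{\B}$ from Lemma~\ref{lemma:chebyshev}; these are the same fact, so the approaches coincide.
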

\begin{proof}
	We proceed by induction. Notice $\w_T$ is a degree-$T$ polynomial of $\W$, so $\basis^\top\w_T=\basis^\top$ and thus $\bar{\bly}^{(0)}=\frac{1}{N}\basis^\top\y^{(0)}=\frac{1}{N}\basis^\top\w_T\d^{(0)}=\frac{1}{N}\basis^\top\d^{(0)}=\bar{\vec{d}}^{(0)}$. For $k\ge0$, we have
		\begin{align*}
			\bar{\bly}^{(k)}=\frac{1}{N}\basis^\top\y^{(k)}&\stack{\eqref{algo:y_update}}{=}\frac{1}{N}\basis^\top\w_T\left(\y^{(k-1)}+\d^{(k)}-\d^{(k-1)}\right)\\
			&=\frac{1}{N}\basis^\top\left(\y^{(k-1)}+\d^{(k)}-\d^{(k-1)}\right)\\
			&=\bar{\bly}^{(k-1)}+\bar{\vec{d}}^{(k)}-\bar{\vec{d}}^{(k-1)}\\
			&=\bar{\vec{d}}^{(k)}
		\end{align*}
	where in the last step we used the inductive hypothesis, $\bar{\bly}^{(k-1)}=\bar{\vec{d}}^{(k-1)}.$
\end{proof}

\subsection{Building blocks for constant and diminishing step size convergence.}

Our analysis begins by building a non-increasing Lyapunov function by relating changes in $\x$ and $\y$ to various quantities.

\begin{lemma}\label{lemma:r_change}
		For all $k\ge0$ and for all $i=1,\dots,N$,
			\begin{equation}\label{lemma:r_change:bound}
			\begin{split}
				&r(\xiknew)-r(\barxk)+\ip{\xiknew-\barxk,\bly_i^{(k)}}\\
				\le&-\frac{1}{2\alpha_k}\left(\norm{\xiknew-\barxk}_2^2+\norm{\xiknew-\vec{z}_i^{(k)}}_2^2-\norm{\barxk-\vec{z}_i^{(k)}}_2^2\right)
			\end{split}
			\end{equation}
\end{lemma}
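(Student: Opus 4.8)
The plan is to recognize that $\xiknew$ as defined in~\eqref{algo:x_update} is the solution of a strongly convex minimization problem, and then to exploit the optimality condition of that problem against the feasible point $\barxk$. Concretely, define
\begin{equation*}
  g_i(\blx)\triangleq\alpha_k r(\blx)+\tfrac{1}{2}\norm{\blx-\left(\vec{z}_i^{(k)}-\alpha_k\bly_i^{(k)}\right)}_2^2,
\end{equation*}
so that $\xiknew=\argmin_{\blx} g_i(\blx)$. Since $\alpha_k r(\cdot)$ is convex and the quadratic term is $1$-strongly convex, $g_i$ is $1$-strongly convex, hence for the minimizer $\xiknew$ and any comparison point $\barxk$ we have the standard inequality $g_i(\barxk)\ge g_i(\xiknew)+\tfrac{1}{2}\norm{\barxk-\xiknew}_2^2$.

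Next I would expand both $g_i(\barxk)$ and $g_i(\xiknew)$ explicitly and rearrange. Expanding the quadratic terms using $\norm{\blx-(\vec z_i^{(k)}-\alpha_k\bly_i^{(k)})}_2^2 = \norm{\blx-\vec z_i^{(k)}}_2^2 + 2\alpha_k\ip{\blx-\vec z_i^{(k)},\bly_i^{(k)}} + \alpha_k^2\norm{\bly_i^{(k)}}_2^2$, the $\alpha_k^2\norm{\bly_i^{(k)}}_2^2$ terms cancel between the two sides. What remains, after dividing through by $\alpha_k$ and collecting the inner-product terms into $\ip{\xiknew-\barxk,\bly_i^{(k)}}$ (note $\ip{\barxk-\vec z_i^{(k)},\bly_i^{(k)}}-\ip{\xiknew-\vec z_i^{(k)},\bly_i^{(k)}}=-\ip{\xiknew-\barxk,\bly_i^{(k)}}$), is exactly
\begin{equation*}
  r(\xiknew)-r(\barxk)+\ip{\xiknew-\barxk,\bly_i^{(k)}}\le -\tfrac{1}{2\alpha_k}\Big(\norm{\xiknew-\barxk}_2^2+\norm{\xiknew-\vec z_i^{(k)}}_2^2-\norm{\barxk-\vec z_i^{(k)}}_2^2\Big),
\end{equation*}
which is the claimed bound~\eqref{lemma:r_change:bound}. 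The strong-convexity term $\tfrac12\norm{\barxk-\xiknew}_2^2$ supplies precisely the $\norm{\xiknew-\barxk}_2^2$ contribution on the right.

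Alternatively, and essentially equivalently, one can write the first-order optimality condition for~\eqref{algo:x_update}, namely that there exists a subgradient $\vec s\in\partial r(\xiknew)$ with $\alpha_k\vec s + \xiknew-\vec z_i^{(k)}+\alpha_k\bly_i^{(k)}=\0$, pair it with $\barxk-\xiknew$, and use convexity of $r$ in the form $r(\barxk)\ge r(\xiknew)+\ip{\vec s,\barxk-\xiknew}$, together with the polarization identity $\ip{\xiknew-\vec z_i^{(k)},\xiknew-\barxk} = \tfrac12(\norm{\xiknew-\vec z_i^{(k)}}_2^2+\norm{\xiknew-\barxk}_2^2-\norm{\barxk-\vec z_i^{(k)}}_2^2)$. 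I do not expect any genuine obstacle here; the only thing to be careful about is the bookkeeping of which point the quadratic is centered at ($\vec z_i^{(k)}-\alpha_k\bly_i^{(k)}$, not $\vec z_i^{(k)}$) so that the linear-in-$\bly_i^{(k)}$ cross terms land correctly as the inner product $\ip{\xiknew-\barxk,\bly_i^{(k)}}$ on the left-hand side. This lemma is a per-agent, per-iteration algebraic identity; its role is to be summed over $i$ and combined with the smoothness and tracking estimates later, so stating it in this raw three-squared-norms form is exactly what the subsequent Lyapunov construction will need.
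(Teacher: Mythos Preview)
Your proposal is correct and essentially matches the paper. The paper uses precisely your ``alternative'' route: it writes the first-order optimality condition for~\eqref{algo:x_update}, invokes convexity of $r$ via the subgradient inequality, and then applies the polarization identity $\ip{\vec{a},\vec{b}}=\tfrac{1}{2}(\norm{\vec{a}}_2^2+\norm{\vec{b}}_2^2-\norm{\vec{a}-\vec{b}}_2^2)$ to $\ip{\xiknew-\barxk,\xiknew-\vec{z}_i^{(k)}}$; your primary route through the $1$-strong convexity of $g_i$ is an equivalent repackaging of the same computation.
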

\begin{proof}
	By~\eqref{algo:x_update}, we have
		\begin{align*}
			\vec{0}&\in\alpha_k\partial r(\xiknew)+\xiknew-\left(\vec{z}_i^{(k)}-\alpha_k\bly_i^{(k)}\right).
		\end{align*}
	Thus, for some $\tilde{\nabla}r(\xiknew)\in\partial r(\xiknew)$, and for any $\blx_i\in\R^{1\times p}$
		\begin{equation}\label{lemma:r_change_eqn1}
			\ip{\xiknew-\blx_i,\tilde{\nabla}r(\xiknew)+\frac{1}{\alpha_k}\left(\xiknew-\vec{z}_i^{(k)}\right)+\bly_i^{(k)}}=0.
		\end{equation}
	By the convexity of $r$, it holds for any $\blx_i\in\R^{1\times p}$,
		\begin{align*}
			&r(\xiknew)-r(\blx_i)+\ip{\xiknew-\barxk,\bly_i^{(k)}}\\
			\le&\ip{\xiknew-\blx_i,\tilde{\nabla}r(\xiknew)+\bly_i^{(k)}}\\
			\stack{\eqref{lemma:r_change_eqn1}}{=}&-\frac{1}{\alpha_k}\ip{\xiknew-\blx_i,\xiknew-\vec{z}_i^{(k)}}\\
			\stack{(a)}{=}&-\frac{1}{2\alpha_k}\left(\norm{\xiknew-\blx_i}_2^2+\norm{\xiknew-\vec{z}_i^{(k)}}_2^2-\norm{\blx_i-\vec{z}_i^{(k)}}_2^2\right),
		\end{align*}
	where (\textit{a}) follows from $\ip{\vec{a},\vec{b}}=\frac{1}{2}\left(\norm{\vec{a}}_2^2+\norm{\vec{b}}_2^2-\norm{\vec{a}-\vec{b}}_2^2\right)$. Letting $\blx_i=\barxk$ completes the proof.
\end{proof}

\begin{lemma}\label{lemma:first_change}
	For all $k\ge0,$
		\begin{equation}\label{lemma:first_change:bound}
			\begin{split}
				&\phi(\barxknew)-\phi(\barxk)\\
				\le&\frac{L}{2}\norm{\barxknew-\barxk}_2^2+\ip{\nabla f(\barxk),\barxknew-\barxk}-\frac{1}{N}\sum_{i=1}^N\ip{\xiknew-\barxk,\bly_i^{(k)}}\\
					&-\frac{1}{2N\alpha_k}\sum_{i=1}^N\left(\norm{\xiknew-\barxk}_2^2+\norm{\xiknew-\vec{z}_i^{(k)}}_2^2-\norm{\barxk-\vec{z}_i^{(k)}}_2^2\right),
			\end{split}
		\end{equation}
	where $\nabla f(\barxk)\triangleq\frac{1}{N}\sum_{i=1}^N\nabla f_i(\barxk)$ comes from~\eqref{global_obj}.
\end{lemma}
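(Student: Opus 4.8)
The plan is to write $\phi = f + r$ and bound the two pieces separately over the step from $\barxk$ to $\barxknew$, then recombine. For the smooth part $f$, I would first upgrade the mean-squared $L$-smoothness of each $f_i$ in \eqref{assumption:smoothness} to ordinary $L$-smoothness: by Jensen's inequality, $\norm{\nabla f_i(\vec{a}) - \nabla f_i(\vec{b})}_2 = \norm{\Exp_\xi[\nabla f_i(\vec{a};\xi) - \nabla f_i(\vec{b};\xi)]}_2 \le \big(\Exp_\xi\norm{\nabla f_i(\vec{a};\xi) - \nabla f_i(\vec{b};\xi)}_2^2\big)^{1/2} \le L\norm{\vec{a}-\vec{b}}_2$, and since averaging preserves this, $f = \frac1N\sum_i f_i$ is $L$-smooth. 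The standard descent lemma then gives $f(\barxknew) - f(\barxk) \le \ip{\nabla f(\barxk), \barxknew - \barxk} + \frac{L}{2}\norm{\barxknew - \barxk}_2^2$, which accounts for the first two terms on the right-hand side of \eqref{lemma:first_change:bound}.

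For the nonsmooth part, the key observation is that $\barxknew$, being the average of the rows of $\x^{(k+1)}$, equals $\frac1N\sum_{i=1}^N\xiknew$ — a convex combination of the $\xiknew$, each of which lies in $\mathrm{dom}(r)$ by the proximal update \eqref{algo:x_update}. Convexity of $r$ and Jensen's inequality then give $r(\barxknew) \le \frac1N\sum_i r(\xiknew)$, hence $r(\barxknew) - r(\barxk) \le \frac1N\sum_i\big(r(\xiknew) - r(\barxk)\big)$. Applying Lemma~\ref{lemma:r_change} to each summand, moving the inner-product term to the other side, and averaging over $i$ produces precisely the $-\frac1N\sum_i\ip{\xiknew - \barxk, \bly_i^{(k)}}$ term together with the $-\frac{1}{2N\alpha_k}\sum_i\big(\norm{\xiknew-\barxk}_2^2 + \norm{\xiknew-\vec{z}_i^{(k)}}_2^2 - \norm{\barxk-\vec{z}_i^{(k)}}_2^2\big)$ term.

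Adding the two bounds and recalling $\phi = f + r$ yields \eqref{lemma:first_change:bound}. I expect this lemma to be short — essentially the bookkeeping step that sets up the telescoping structure of the Lyapunov function; the only points needing a little care are the smoothness transfer (so that the deterministic descent lemma legitimately applies to $f$ at the averaged iterate) and keeping the identity $\barxknew = \frac1N\sum_i\xiknew$ straight so that Jensen applies to $r$. I do not anticipate a genuine obstacle here: the delicate estimates — controlling the error terms $\vec{r}_i^{(k)}$ and the consensus quantities $\x_\perp,\y_\perp$ — appear in the later lemmas, not in this one.
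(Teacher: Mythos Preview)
Your proposal is correct and matches the paper's proof essentially line for line: split $\phi=f+r$, use the $L$-smoothness descent inequality for the smooth part at $\barxk$, use Jensen on $r$ to pass from $r(\barxknew)$ to $\frac1N\sum_i r(\xiknew)$, and then invoke Lemma~\ref{lemma:r_change} on each summand. The only cosmetic difference is that the paper applies the descent lemma to each $f_i$ and averages, whereas you apply it directly to $f=\frac1N\sum_i f_i$; your explicit justification that mean-squared $L$-smoothness in~\eqref{assumption:smoothness} implies deterministic $L$-smoothness (via Jensen on $\Exp_\xi$) is a detail the paper leaves implicit.
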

\begin{proof}
	From the $L$-smoothness of each $f_i$ and the convexity of $r$, we have
		\begin{align*}
			&\phi(\barxknew)-\phi(\barxk)\\
			=&\frac{1}{N}\sum_{i=1}^N\left(f_i(\barxknew)+r(\barxknew)\right)-\frac{1}{N}\sum_{i=1}^N\left(f_i(\barxk)+r(\barxk)\right)\\
			=&\frac{1}{N}\sum_{i=1}^N\left(f_i(\barxknew)-f_i(\barxk)\right)+r(\barxknew)-r(\barxk)\\
			\le&\frac{1}{N}\sum_{i=1}^N\left(f_i(\barxknew)-f_i(\barxk)\right)+\frac{1}{N}\sum_{i=1}^Nr(\blx_i^{(k+1)})-r(\barxk)\\
			\le&\frac{L}{2}\norm{\barxknew-\barxk}_2^2+\ip{\frac{1}{N}\sum_{i=1}^N\nabla f_i(\barxk),\barxknew-\barxk}+\frac{1}{N}\sum_{i=1}^Nr(\blx_i^{(k+1)})-r(\barxk)\\
			\stack{\eqref{lemma:r_change:bound}}{\le}&\enskip\frac{L}{2}\norm{\barxknew-\barxk}_2^2+\ip{\frac{1}{N}\sum_{i=1}^N\nabla f_i(\barxk),\barxknew-\barxk}-\frac{1}{N}\sum_{i=1}^N\ip{\xiknew-\barxk,\bly_i^{(k)}}\\
				&-\frac{1}{2N\alpha_k}\sum_{i=1}^N\left(\norm{\xiknew-\barxk}_2^2+\norm{\xiknew-\vec{z}_i^{(k)}}_2^2-\norm{\barxk-\vec{z}_i^{(k)}}_2^2\right).
		\end{align*}
	Utilizing~\eqref{global_obj} to have $\nabla f(\barxk)=\frac{1}{N}\sum_{i=1}^N\nabla f_i(\barxk)$ completes the proof.
\end{proof}

\begin{lemma}\label{lemma:first_inner_product}
	For all $k\ge0,$ the following equality holds,
		\begin{equation}\label{lemma:first_inner_product:bound}
			\begin{split}
				&\ip{\nabla f(\barxk),\barxknew-\barxk}-\frac{1}{N}\sum_{i=1}^N\ip{\xiknew-\barxk,\bly_i^{(k)}}\\
				=&\ip{\nabla f(\barxk)-\frac{1}{N}\sum_{i=1}^N\nabla f_i(\xik),\barxknew-\barxk}-\ip{\bar{\vec{r}}^{(k)},\barxknew-\barxk}\\
				&+\frac{1}{N}\sum_{i=1}^N\ip{\bar{\bly}^{(k)}-\bly_i^{(k)},\xiknew-\barxk},
			\end{split}
		\end{equation}
	where $\bar{\vec{r}}^{(k)}=\frac{1}{N}\sum_{i=1}^N\vec{r}_i^{(k)}$ for all $k.$
\end{lemma}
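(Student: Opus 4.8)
The plan is to prove~\eqref{lemma:first_inner_product:bound} by a direct algebraic rearrangement of its left-hand side, using only two facts already available: that $\barxknew-\barxk$ is the row-average of the displacements $\xiknew-\barxk$, and that $\bar{\bly}^{(k)}=\bar{\vec{d}}^{(k)}$ from Lemma~\ref{lemma:y_d_relation}, together with the definition $\vec{r}_i^{(k)}=\vec{d}_i^{(k)}-\nabla f_i(\xik)$ in~\eqref{error_term}. No inequality, convexity, or smoothness is needed: the statement is an identity.

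First I would write $\barxknew-\barxk=\frac1N\sum_{i=1}^N(\xiknew-\barxk)$, which holds because $\barxknew=\frac1N\sum_{i=1}^N\xiknew$ while $\barxk$ is constant in $i$; hence, for the $i$-independent vector $\bar{\bly}^{(k)}$, $\frac1N\sum_{i=1}^N\ip{\xiknew-\barxk,\bar{\bly}^{(k)}}=\ip{\barxknew-\barxk,\bar{\bly}^{(k)}}$. Inserting $\pm\bar{\bly}^{(k)}$ into each summand of the second term on the left-hand side then gives
\[
  -\frac1N\sum_{i=1}^N\ip{\xiknew-\barxk,\bly_i^{(k)}}
  =\frac1N\sum_{i=1}^N\ip{\bar{\bly}^{(k)}-\bly_i^{(k)},\xiknew-\barxk}
  -\ip{\barxknew-\barxk,\bar{\bly}^{(k)}},
\]
which already isolates the last term on the right-hand side of~\eqref{lemma:first_inner_product:bound}.

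Second I would rewrite the residual inner product. By Lemma~\ref{lemma:y_d_relation}, $\bar{\bly}^{(k)}=\bar{\vec{d}}^{(k)}=\frac1N\sum_{i=1}^N\vec{d}_i^{(k)}$, and by~\eqref{error_term} this equals $\frac1N\sum_{i=1}^N\nabla f_i(\xik)+\bar{\vec{r}}^{(k)}$. Substituting this and combining with the leading term $\ip{\nabla f(\barxk),\barxknew-\barxk}$ of the left-hand side yields
\[
  \ip{\nabla f(\barxk),\barxknew-\barxk}-\ip{\barxknew-\barxk,\bar{\bly}^{(k)}}
  =\ip{\nabla f(\barxk)-\tfrac1N\sum_{i=1}^N\nabla f_i(\xik),\barxknew-\barxk}-\ip{\bar{\vec{r}}^{(k)},\barxknew-\barxk},
\]
which are precisely the first two terms on the right-hand side. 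Adding the two displays establishes the equality.

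The main (and essentially only) obstacle is bookkeeping: one must center $\bly_i^{(k)}$ around $\bar{\bly}^{(k)}$ before using the averaging identity, and apply Lemma~\ref{lemma:y_d_relation} to the averaged tracking variable rather than to the individual $\bly_i^{(k)}$. Once these two points are handled, collecting terms finishes the proof.
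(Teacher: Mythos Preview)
Your proposal is correct and follows essentially the same approach as the paper: both proofs insert $\pm\bar{\bly}^{(k)}$ to split off the consensus term $\frac{1}{N}\sum_i\ip{\bar{\bly}^{(k)}-\bly_i^{(k)},\xiknew-\barxk}$, then apply Lemma~\ref{lemma:y_d_relation} to replace $\bar{\bly}^{(k)}$ by $\bar{\vec{d}}^{(k)}$ and finally add and subtract $\frac{1}{N}\sum_i\nabla f_i(\xik)$ to expose $\bar{\vec{r}}^{(k)}$. The only difference is the order of presentation---you center $\bly_i^{(k)}$ first and then decompose $\bar{\bly}^{(k)}$, whereas the paper carries both manipulations inside a single combined sum---but the underlying identity and the ingredients used are identical.
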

\begin{proof}
	We have,
		\begin{align}
			&\ip{\nabla f(\barxk),\barxknew-\barxk}-\frac{1}{N}\sum_{i=1}^N\ip{\xiknew-\barxk,\bly_i^{(k)}}\nonumber\\
			=&\ip{\nabla f(\barxk),\frac{1}{N}\sum_{i=1}^N\blx_i^{(k+1)}-\barxk}-\frac{1}{N}\sum_{i=1}^N\ip{\xiknew-\barxk,\bly_i^{(k)}}\nonumber\\
			\stack{(a)}{=}&\frac{1}{N}\sum_{i=1}^N\ip{\nabla f(\barxk),\blx_i^{(k+1)}-\barxk}-\frac{1}{N}\sum_{i=1}^N\ip{\xiknew-\barxk,\bly_i^{(k)}}\nonumber\\
			=&\frac{1}{N}\sum_{i=1}^N\ip{\nabla f(\barxk)-\bar{\bly}^{(k)}+\bar{\bly}^{(k)}-\bly_i^{(k)},\xiknew-\barxk}\nonumber\\
			\stack{(b)}{=}&\ip{\nabla f(\barxk)-\bar{\vec{d}}^{(k)},\barxknew-\barxk}+\frac{1}{N}\sum_{i=1}^N\ip{\bar{\bly}^{(k)}-\bly_i^{(k)},\xiknew-\barxk}\label{lemma:first_inner_product:eqn1}
		\end{align}
	where (\textit{a}) utilizes the linearity of the inner product and (\textit{b}) comes from Lemma~\ref{lemma:y_d_relation} in conjunction with the linearity of the inner product. Now,
		\begin{align}
			\ip{\nabla f(\barxk)-\bar{\vec{d}}^{(k)},\barxknew-\barxk}&=\ip{\nabla f(\barxk)-\frac{1}{N}\sum_{i=1}^N\nabla f_i(\xik),\barxknew-\barxk}\nonumber\\
			&+\ip{\frac{1}{N}\sum_{i=1}^N\nabla f_i(\xik)-\frac{1}{N}\sum_{i=1}^N\vec{d}_i^{(k)},\barxknew-\barxk}.\label{lemma:first_inner_product:eqn2}
		\end{align}
	 Plugging~\eqref{lemma:first_inner_product:eqn2} into~\eqref{lemma:first_inner_product:eqn1} and utilizing~\eqref{error_term} completes the proof.
\end{proof}

\begin{lemma}\label{lemma:second_change}
	For all $k\ge0,$ the following inequality holds,
		\begin{equation}\label{lemma:second_change:bound}
			\begin{split}
				\phi(\barxknew)-\phi(\barxk)\le&-\frac{1}{2N}\left(\frac{1}{\alpha_k}-3L\right)\fronorm{\x^{(k+1)}-\bar{\x}^{(k)}}-\ip{\bar{\vec{r}}^{(k)},\barxknew-\barxk}\\
					&+\frac{1}{2N\alpha_k}\fronorm{\bar{\x}^{(k)}-\z^{(k)}}-\frac{1}{2N\alpha_k}\fronorm{\x^{(k+1)}-\z^{(k)}}\\
					&+\frac{L}{2N}\fronorm{\x_\perp^{(k)}}+\frac{1}{2NL}\fronorm{\y_\perp^{(k)}}.
			\end{split}
		\end{equation}
\end{lemma}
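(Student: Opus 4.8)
\textbf{Proof strategy for Lemma~\ref{lemma:second_change}.}
The plan is to combine the two preceding lemmas and then clean up the ``cross'' terms with elementary inequalities. First I would start from the descent bound of Lemma~\ref{lemma:first_change} and replace, verbatim, the two inner-product terms $\ip{\nabla f(\barxk),\barxknew-\barxk}-\frac1N\sum_i\ip{\xiknew-\barxk,\bly_i^{(k)}}$ by the right-hand side of the identity in Lemma~\ref{lemma:first_inner_product}. After this substitution the residual term $-\ip{\bar{\vec{r}}^{(k)},\barxknew-\barxk}$ already appears and is kept untouched (it is handled later), together with the telescoping-friendly quadratics $-\frac1{2N\alpha_k}\sum_i\big(\norm{\xiknew-\barxk}_2^2+\norm{\xiknew-\vec{z}_i^{(k)}}_2^2-\norm{\barxk-\vec{z}_i^{(k)}}_2^2\big)$ coming from Lemma~\ref{lemma:first_change}, plus exactly three terms that still need bounding: the curvature term $\frac L2\norm{\barxknew-\barxk}_2^2$, the gradient-drift inner product $\ip{\nabla f(\barxk)-\frac1N\sum_i\nabla f_i(\xik),\barxknew-\barxk}$, and the tracking-error inner product $\frac1N\sum_i\ip{\bar{\bly}^{(k)}-\bly_i^{(k)},\xiknew-\barxk}$.

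Before bounding these I would record two facts. By convexity of the squared norm and $\barxknew-\barxk=\frac1N\sum_i(\xiknew-\barxk)$, we have $\norm{\barxknew-\barxk}_2^2\le\frac1N\sum_i\norm{\xiknew-\barxk}_2^2$, to be used wherever $\norm{\barxknew-\barxk}_2^2$ appears. Also, Assumption~\ref{assumption:objective_function}(ii) combined with Jensen's inequality (pulling the expectation inside the norm) gives the deterministic smoothness $\norm{\nabla f_i(\vec{a})-\nabla f_i(\vec{b})}_2\le L\norm{\vec{a}-\vec{b}}_2$; since $\nabla f(\barxk)=\frac1N\sum_i\nabla f_i(\barxk)$ and the $i$-th row of $\x_\perp^{(k)}$ is $\xik-\barxk$, another use of convexity of the squared norm yields $\norm{\nabla f(\barxk)-\frac1N\sum_i\nabla f_i(\xik)}_2^2\le\frac{L^2}{N}\fronorm{\x_\perp^{(k)}}$.

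Next I would bound each cross term by Young's inequality in the form $\ip{\vec{a},\vec{b}}\le\frac1{2L}\norm{\vec{a}}_2^2+\frac L2\norm{\vec{b}}_2^2$, always assigning the gradient-drift/tracking-error vector to $\vec{a}$. For the gradient-drift term this produces $\frac L{2N}\fronorm{\x_\perp^{(k)}}+\frac L2\norm{\barxknew-\barxk}_2^2$, using the second recorded fact. For the tracking-error term, applying Young's inequality inside the average gives $\frac1{2NL}\sum_i\norm{\bar{\bly}^{(k)}-\bly_i^{(k)}}_2^2+\frac L{2N}\sum_i\norm{\xiknew-\barxk}_2^2=\frac1{2NL}\fronorm{\y_\perp^{(k)}}+\frac L{2N}\sum_i\norm{\xiknew-\barxk}_2^2$, since the $i$-th row of $\y_\perp^{(k)}$ is $\bly_i^{(k)}-\bar{\bly}^{(k)}$. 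The two leftover curvature terms $\frac L2\norm{\barxknew-\barxk}_2^2$ (the original one and the one produced by the first Young step) are each replaced by $\frac L{2N}\sum_i\norm{\xiknew-\barxk}_2^2$ via the first recorded fact.

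The proof then finishes by bookkeeping. The coefficient of $\frac1N\sum_i\norm{\xiknew-\barxk}_2^2=\frac1N\fronorm{\x^{(k+1)}-\bar{\x}^{(k)}}$ is $-\frac1{2\alpha_k}$ from the proximal quadratic plus three contributions of $\frac L2$ (the curvature term, the first Young step, the tracking Young step), i.e.\ $-\frac1{2\alpha_k}+\frac{3L}2$, which equals $-\frac1{2N}(\frac1{\alpha_k}-3L)$ times $\fronorm{\x^{(k+1)}-\bar{\x}^{(k)}}$; the $\vec{z}_i^{(k)}$-quadratics give $\frac1{2N\alpha_k}\fronorm{\bar{\x}^{(k)}-\z^{(k)}}-\frac1{2N\alpha_k}\fronorm{\x^{(k+1)}-\z^{(k)}}$; and the remaining pieces $\frac L{2N}\fronorm{\x_\perp^{(k)}}$, $\frac1{2NL}\fronorm{\y_\perp^{(k)}}$, and $-\ip{\bar{\vec{r}}^{(k)},\barxknew-\barxk}$ match the claimed bound. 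The only point requiring care — the ``main obstacle,'' modest as it is — is to use the Young parameter equal to $L$ uniformly, so that the $\x_\perp^{(k)}$ and $\y_\perp^{(k)}$ terms land with exactly the constants $\frac L{2N}$ and $\frac1{2NL}$, and to check that the three independent $\frac L2$ contributions assemble into the stated $\frac{3L}2$; there is no deeper idea here, as this lemma is purely the descent-bookkeeping step feeding the later Lyapunov construction.
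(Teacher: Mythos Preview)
Your proposal is correct and follows essentially the same route as the paper: start from Lemma~\ref{lemma:first_change}, substitute Lemma~\ref{lemma:first_inner_product}, then use Jensen on $\norm{\barxknew-\barxk}_2^2$ and the Peter--Paul/Young inequality with parameter $L$ on the two cross terms, exactly as you describe. The only cosmetic differences are that the paper invokes the name ``Peter--Paul'' and writes the smoothness bound $\norm{\nabla f(\barxk)-\frac1N\sum_i\nabla f_i(\xik)}_2^2\le\frac{L^2}{N}\fronorm{\x_\perp^{(k)}}$ inline rather than recording it separately; your coefficient bookkeeping matches the paper's.
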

\begin{proof}
	From~\eqref{lemma:first_change:bound}, we use~\eqref{lemma:first_inner_product:bound} to have
		\begin{align*}
			&\phi(\barxknew)-\phi(\barxk)\\
			\le&\frac{L}{2}\norm{\barxknew-\barxk}_2^2-\ip{\bar{\vec{r}}^{(k)},\barxknew-\barxk}\\
				&+\ip{\nabla f(\barxk)-\frac{1}{N}\sum_{i=1}^N\nabla f_i(\xik),\barxknew-\barxk}+\frac{1}{N}\sum_{i=1}^N\ip{\bar{\bly}^{(k)}-\bly_i^{(k)},\xiknew-\barxk}\\
				&-\frac{1}{2N\alpha_k}\sum_{i=1}^N\left(\norm{\xiknew-\barxk}_2^2+\norm{\xiknew-\vec{z}_i^{(k)}}_2^2-\norm{\barxk-\vec{z}_i^{(k)}}_2^2\right).
			\end{align*}
	We bound terms individually. By Jensen's inequality, we have
			\begin{align}\label{lemma:second_change:jensens}
				\norm{\barxknew-\barxk}_2^2\le\frac{1}{N}\sum_{i=1}^N\norm{\xiknew-\barxk}_2^2.
			\end{align}
	By the Peter-Paul inequality, we have
		\begin{align*}
			\frac{1}{N}\sum_{i=1}^N\ip{\bar{\bly}^{(k)}-\bly_i^{(k)},\xiknew-\barxk}\le&\frac{1}{N}\sum_{i=1}^N\left(\frac{L}{2}\norm{\xiknew-\barxk}_2^2+\frac{1}{2L}\norm{\bar{\bly}^{(k)}-\bly_i^{(k)}}_2^2\right)\\
		\end{align*}
	and
		\begin{align*}
			&\ip{\nabla f(\barxk)-\frac{1}{N}\sum_{i=1}^N\nabla f_i(\xik),\barxknew-\barxk}\\
			\le&\frac{L}{2}\norm{\barxknew-\barxk}_2^2+\frac{1}{2L}\norm{\nabla f(\barxk)-\frac{1}{N}\sum_{i=1}^N\nabla f_i(\xik)}_2^2\\
			\le&\frac{L}{2}\norm{\barxknew-\barxk}_2^2+\frac{1}{2NL}\sum_{i=1}^N\norm{\nabla f_i(\barxk)-\nabla f_i(\xik)}_2^2\\
			\stack{\eqref{assumption:smoothness},\eqref{lemma:second_change:jensens}}\le&\quad\frac{L}{2N}\sum_{i=1}^N\norm{\xiknew-\barxk}_2^2+\frac{L}{2N}\sum_{i=1}^N\norm{\xik-\barxk}_2^2,
		\end{align*}
	where the second inequality also uses Jensen's inequality. Combining like terms results in
		\begin{align*}
			&\phi(\barxknew)-\phi(\barxk)\\
			\le&-\frac{1}{2N}\left(\frac{1}{\alpha_k}-3L\right)\sum_{i=1}^N\norm{\xiknew-\barxk}_2^2-\ip{\bar{\vec{r}}^{(k)},\barxknew-\barxk}\\
				&+\frac{1}{2N\alpha_k}\sum_{i=1}^N\norm{\barxk-\vec{z}_i^{(k)}}_2^2-\frac{1}{2N\alpha_k}\sum_{i=1}^N\norm{\blx_i^{(k+1)}-\vec{z}_i^{(k)}}_2^2\\
				&+\frac{L}{2N}\sum_{i=1}^N\norm{\xik-\barxk}_2^2+\frac{1}{2NL}\sum_{i=1}^N\norm{\bar{\bly}^{(k)}-\bly_i^{(k)}}_2^2.
		\end{align*}
	We complete the proof by writing the summations of the 2-norms into the equivalent Frobenius norm expressions.
\end{proof}

\begin{lemma}\label{lemma:matrix_bounds}
	For all $k\ge0,$ the followings hold,
		\begin{align}
			\fronorm{\x_\perp^{(k+1)}}&\le\tilde{\rho}\fronorm{\x_\perp^{(k)}}+\frac{\alpha_k^2}{1-\tilde{\rho}}\fronorm{\y_{\perp}^{(k)}},\label{lemma:matrix_bounds:x}
		\end{align}
	and
		\begin{align}
			\Exp\fronorm{\y_\perp^{(k+1)}}&\le\tilde{\rho}\Exp\fronorm{\y_\perp^{(k)}}+\frac{1}{1-\tilde{\rho}}\left(8L^2\Exp\fronorm{\x^{(k+1)}-\x^{(k)}}+4\beta_k^2\Exp\fronorm{\vec{R}^{(k)}}+4N\beta_k^2\hat{\sigma}^2\right),\label{lemma:matrix_bounds:y}
		\end{align}
	where $\tilde{\rho}$ is defined in~\eqref{spectral_gap_cheby} and $\hat{\sigma}^2>0$ is defined in~\eqref{unbiased_assumption}.
\end{lemma}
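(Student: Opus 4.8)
The plan is to prove the two inequalities separately, each by exploiting the contraction property of the generalized mixing operator $\w_T$ recorded in~\eqref{spectral_gap_cheby}, namely $\norm{\w_T - \avg}_2 = \tilde\rho < 1$, together with the null-space/averaging property $\basis^\top\w_T = \basis^\top$ (which gives $\bar{\vec A} = \avg\vec A$ for every matrix and is invariant under $\w_T$). For~\eqref{lemma:matrix_bounds:x}: starting from the $\x$-update, write $\x^{(k+1)} = \prox_{\alpha_k r}(\z^{(k)} - \alpha_k\y^{(k)})$ (row-wise), subtract its row-average, and use that the proximal map is non-expansive (Lemma~\ref{lemma:prox}) together with the fact that $\z^{(k)} = \w_T\x^{(k)}$ and $\y^{(k)}$ enters linearly. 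The key point is that projecting onto the orthogonal complement of $\basis$ commutes with $\w_T$, so $\z_\perp^{(k)} = (\w_T - \avg)\x^{(k)}$ and hence $\norm{\z_\perp^{(k)}}_F \le \tilde\rho\norm{\x_\perp^{(k)}}_F$. After that, I would split the squared norm $\norm{\x_\perp^{(k+1)}}_F^2$ using a Young/Peter–Paul inequality with weight tuned to $\tilde\rho$ (i.e.\ $\norm{a+b}^2 \le (1+c)\norm{a}^2 + (1+c^{-1})\norm{b}^2$ with $1+c = 1/\tilde\rho$, so that $\tilde\rho(1+c) = 1$ is not quite it — rather one wants $\tilde\rho^2(1+c) \le \tilde\rho$, giving $c = 1/\tilde\rho - 1$ and $1+c^{-1} = 1/(1-\tilde\rho)$), which produces exactly the claimed coefficients $\tilde\rho$ on $\norm{\x_\perp^{(k)}}_F^2$ and $\frac{\alpha_k^2}{1-\tilde\rho}$ on $\norm{\y_\perp^{(k)}}_F^2$.

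For~\eqref{lemma:matrix_bounds:y}: from the $\y$-update $\y^{(k+1)} = \w_T(\y^{(k)} + \d^{(k+1)} - \d^{(k)})$, again subtract the row-average and use $\overline{\y^{(k+1)}} = \overline{\y^{(k)}} + \overline{\d^{(k+1)}} - \overline{\d^{(k)}}$ (consistent with Lemma~\ref{lemma:y_d_relation}), so that $\y_\perp^{(k+1)} = (\w_T - \avg)\big(\y_\perp^{(k)} + (\d^{(k+1)} - \d^{(k)})_\perp\big)$. Applying $\norm{\w_T - \avg}_2 \le \tilde\rho$ and then the same Young inequality peels off $\tilde\rho\norm{\y_\perp^{(k)}}_F^2 + \frac{1}{1-\tilde\rho}\norm{(\d^{(k+1)}-\d^{(k)})_\perp}_F^2 \le \tilde\rho\norm{\y_\perp^{(k)}}_F^2 + \frac{1}{1-\tilde\rho}\norm{\d^{(k+1)}-\d^{(k)}}_F^2$ (dropping the averaging projection only decreases the norm). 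The remaining work is to bound $\Exp\norm{\d^{(k+1)} - \d^{(k)}}_F^2$. Using the recursion~\eqref{base_var_reduction}, one writes $\d^{(k+1)} - \d^{(k)} = (1-\beta_k)\big(\vec v^{(k+1)} - \vec u^{(k+1)}\big) - \beta_k\d^{(k)} + \beta_k\tilde{\vec v}^{(k+1)}$; adding and subtracting $\nabla F(\x^{(k+1)})$ and $\nabla F(\x^{(k)})$ to rewrite this as $(1-\beta_k)(\vec v^{(k+1)} - \vec u^{(k+1)} - \nabla F(\x^{(k+1)}) + \nabla F(\x^{(k)})) + (1-\beta_k)(\nabla F(\x^{(k+1)}) - \nabla F(\x^{(k)})) - \beta_k \vec R^{(k)} + \beta_k(\tilde{\vec v}^{(k+1)} - \nabla F(\x^{(k+1)}))$, where $\vec R^{(k)} = \d^{(k)} - \nabla F(\x^{(k)})$ from~\eqref{matrix_error}. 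Then: the mean-squared $L$-smoothness~\eqref{assumption:smoothness} controls both $\Exp\norm{\vec v^{(k+1)} - \vec u^{(k+1)} - (\nabla F(\x^{(k+1)}) - \nabla F(\x^{(k)}))}_F^2 \le \Exp\norm{\vec v^{(k+1)} - \vec u^{(k+1)}}_F^2 \le L^2\Exp\norm{\x^{(k+1)} - \x^{(k)}}_F^2$ (using that variance is bounded by the second moment, and the per-sample Lipschitz bound, agent-wise and batch-averaged) and $\norm{\nabla F(\x^{(k+1)}) - \nabla F(\x^{(k)})}_F^2 \le L^2\norm{\x^{(k+1)} - \x^{(k)}}_F^2$; the estimator's variance bound~\eqref{unbiased_assumption}/\eqref{v2_update} controls $\Exp\norm{\tilde{\vec v}^{(k+1)} - \nabla F(\x^{(k+1)})}_F^2 \le N\hat\sigma^2$; and $\norm{\vec R^{(k)}}_F^2$ is carried as-is. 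Combining these with a four-term (or $(1-\beta_k)^2 \le 1$, $\beta_k^2 \le \beta_k$ type) expansion and absorbing $(1-\beta_k)^2 \le 1$ yields a bound of the form $8L^2\Exp\norm{\x^{(k+1)} - \x^{(k)}}_F^2 + 4\beta_k^2\Exp\norm{\vec R^{(k)}}_F^2 + 4N\beta_k^2\hat\sigma^2$, matching~\eqref{lemma:matrix_bounds:y}.

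I expect the main obstacle to be the bookkeeping in bounding $\Exp\norm{\d^{(k+1)} - \d^{(k)}}_F^2$: one must split the difference into the right groups of terms before taking expectations, use the conditional unbiasedness of $\vec v^{(k+1)} - \vec u^{(k+1)}$ relative to $\nabla F(\x^{(k+1)}) - \nabla F(\x^{(k)})$ carefully (to reduce the second moment to a variance wherever possible, otherwise one loses a factor), and track constants so that the $4$'s and $8$ come out exactly. The mixing-contraction parts~\eqref{lemma:matrix_bounds:x} and the first two terms of~\eqref{lemma:matrix_bounds:y} are routine once the commuting relation $\w_T\circ(\,\cdot\,)_\perp = (\w_T - \avg)$ on the orthogonal complement is invoked; the subtlety there is only choosing the Young-inequality weight correctly so that the residual coefficient is $\tilde\rho$ (not $\tilde\rho^2$ or $\sqrt{\tilde\rho}$) and the cross term coefficient is $\frac{1}{1-\tilde\rho}$ rather than something larger. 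A minor additional care is needed to confirm that $\w_T$ indeed satisfies $\norm{\w_T - \avg}_2 = \tilde\rho$ as a spectral-norm bound valid on all of $\R^{N\times p}$ columnwise, which follows from Assumption~\ref{assumption:mixing_matrix}(ii)–(iv) applied to the polynomial $\w_T$ and is exactly the content of~\eqref{spectral_gap_cheby}.
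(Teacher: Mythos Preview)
Your proposal is correct and follows essentially the same route as the paper: the same Peter--Paul split with weight $c=\tfrac{1}{\tilde\rho}-1$ for both consensus bounds, the non-expansiveness of $\prox$ combined with the variational/projection characterization of the row-average for~\eqref{lemma:matrix_bounds:x}, and a four-term Young expansion of $\d^{(k+1)}-\d^{(k)}$ for~\eqref{lemma:matrix_bounds:y}. The only cosmetic difference is that the paper does not bother to center $\vec V^{(k+1)}-\vec U^{(k+1)}$ before bounding its second moment (it bounds $\Exp\fronorm{\vec V^{(k+1)}-\vec U^{(k+1)}}$ directly by~\eqref{assumption:smoothness}), whereas you first subtract $\nabla F(\x^{(k+1)})-\nabla F(\x^{(k)})$ and then bound the variance by the second moment; both decompositions yield the same $8L^2$ coefficient after absorbing $(1-\beta_k)^2\le 1$.
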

\begin{proof}
	We first prove~\eqref{lemma:matrix_bounds:x}. First, we use the following identity
		\begin{align*}
			\hbox{prox}_{\alpha_kR}\left(\bar{\x}^{(k)}-\alpha_k\bar{\y}^{(k)}\right)&\triangleq\begin{bmatrix}
				\hbox{prox}_{\alpha_kr}\left(\barxk-\alpha_k\bar{\bly}^{(k)}\right) \\
				\vdots\\
				\hbox{prox}_{\alpha_kr}\left(\barxk-\alpha_k\bar{\bly}^{(k)}\right)
			\end{bmatrix}\in\R^{N\times p}\\
			&=\avg\hbox{prox}_{\alpha_kR}\left(\bar{\x}^{(k)}-\alpha_k\bar{\y}^{(k)}\right),
		\end{align*}
	since each row is identical. Then by~\eqref{algo:x_update} we have
		\begin{align*}
			\fronorm{\x_\perp^{(k+1)}}=&\fronorm{\hbox{prox}_{\alpha_kR}\left(\w_T(\x^{(k)})-\alpha_k\y^{(k)}\right)-\avg\hbox{prox}_{\alpha_kR}\left(\w_T(\x^{(k)})-\alpha_k\y^{(k)}\right)}\\
			\stack{(a)}{=}&\fronorm{\hbox{prox}_{\alpha_kR}\left(\w_T(\x^{(k)})-\alpha_k\y^{(k)}\right)-\hbox{prox}_{\alpha_kR}\left(\bar{\x}^{(k)}-\alpha_k\bar{\y}^{(k)}\right)}\\
				-&\fronorm{\avg\left(\hbox{prox}_{\alpha_kR}\left(\bar{\x}^{(k)}-\alpha_k\bar{\y}^{(k)}\right)-\hbox{prox}_{\alpha_kR}\left(\w_T(\x^{(k)})-\alpha_k\y^{(k)}\right)\right)}\\
			\stack{\eqref{lemma:prox:bound}}{\le}&\enskip\fronorm{\w_T(\x^{(k)})-\bar{\x}^{(k)}-\alpha_k\left(\y^{(k)}-\bar{\y}^{(k)}\right)}\\
			=&\fronorm{\w_T(\x^{(k)})-\bar{\x}^{(k)}}+\fronorm{\alpha_k\left(\y^{(k)}-\bar{\y}^{(k)}\right)}\\
				&-2\ip{\w_T(\x^{(k)})-\bar{\x}^{(k)},\alpha_k\left(\y^{(k)}-\bar{\y}^{(k)}\right)}\\
			\stack{(b)}{\le}&\fronorm{\w_T(\x^{(k)})-\bar{\x}^{(k)}}+\fronorm{\alpha_k\left(\y^{(k)}-\bar{\y}^{(k)}\right)}\\
				&+\delta\fronorm{\w_T(\x^{(k)})-\bar{\x}^{(k)}}+\frac{1}{\delta}\fronorm{\alpha_k\left(\y^{(k)}-\bar{\y}^{(k)}\right)}\\
			\stack{(c)}{=}&(1+\delta)\fronorm{\left(\w_T-\avg\right)\x_\perp^{(k)}}+(1+\frac{1}{\delta})\alpha_k^2\fronorm{\y_\perp^{(k)}},
		\end{align*}
	where (\textit{a}) uses that $\avg$ is a projection operator to have, for any matrix $\vec{A}\in\R^{N\times p},$
		\begin{align*}
			\fronorm{\vec{A}-\avg\vec{A}}=&\fronorm{\vec{A}}-2\ip{\vec{A},\avg\vec{A}}+\fronorm{\avg\vec{A}}\\
			=&\fronorm{\vec{A}}-2\fronorm{\avg\vec{A}}+\fronorm{\avg\vec{A}}\\
			=&\fronorm{\vec{A}}-\fronorm{\avg\vec{A}},
		\end{align*}
	(\textit{b}) uses the Peter-Paul inequality with $\delta>0$, and (\textit{c}) uses $\left(\w_T-\avg\right)=\left(\w_T-\avg\right)\left(\identity-\avg\right)$.  Choosing $\delta=\frac{1-\tilde{\rho}}{\tilde{\rho}}$ with $\tilde{\rho}$ defined in~\eqref{spectral_gap_cheby} and using the compatibility of the Frobenius norm and the 2-norm to have
		\begin{align*}
			\fronorm{\left(\w_T-\avg\right)\x_\perp^{(k)}}\le\norm{\w_T-\avg}_2^2\fronorm{\x_\perp^{(k)}}\stack{\eqref{spectral_gap_cheby}}=\tilde{\rho}^2\fronorm{\x_\perp^{(k)}}
		\end{align*}
	yields~\eqref{lemma:matrix_bounds:x}.\\
	To prove~\eqref{lemma:matrix_bounds:y}, we use Assumption~\ref{assumption:mixing_matrix} parts (\textit{ii}) and (\textit{iii}) to have
		\begin{align*}
			&\fronorm{\y_\perp^{(k+1)}}\\
			\stack{\eqref{algo:y_update}}{=}&\fronorm{\w_T\left(\y^{(k)}+\d^{(k+1)}-\d^{(k)}\right)-\avg\left(\y^{(k)}+\d^{(k+1)}-\d^{(k)}\right)}\\
			\le&(1+c_1)\fronorm{\left(\w_T-\avg\right)\y^{(k)}}+(1+\frac{1}{c_1})\fronorm{\left(\w_T-\avg\right)\left(\d^{(k+1)}-\d^{(k)}\right)}\\
			\stack{(a)}{\le}&(1+c_1)\tilde{\rho}^2\fronorm{\y_\perp^{(k)}}+(1+\frac{1}{c_1})\tilde{\rho}^2\fronorm{\left(\identity-\avg\right)\d^{(k+1)}-\d^{(k)}}\\
			\stack{(b)}{\le}&(1+c_1)\tilde{\rho}^2\fronorm{\y_\perp^{(k)}}+(1+\frac{1}{c_1})\fronorm{\d^{(k+1)}-\d^{(k)}},
		\end{align*}
	where (\textit{a}) utilizes $\left(\w-\avg\right)=\left(\w-\avg\right)\left(\identity-\avg\right)$ coupled with part \textit{(iv)} of Assumption~\ref{assumption:mixing_matrix} and (\textit{b}) uses $\tilde{\rho}^2<1$ and $\norm{\identity-\avg}_2\le1.$ Next, by Young's inequality we have
		\begin{align}
			&\fronorm{\d^{(k+1)}-\d^{(k)}}\nonumber\\
			=&\fronorm{(1-\beta_k)\left(\vec{V}^{(k+1)}-\vec{U}^{(k+1)}\right)+\beta_k\left(\tilde{\vec{V}}^{(k+1)}-\d^{(k)}\right)}\nonumber\\
			\le&4\left((1-\beta_k)^2\fronorm{\vec{V}^{(k+1)}-\vec{U}^{(k+1)}}+\beta_k^2\fronorm{\vec{R}^{(k)}}\right)\nonumber\\
					&+4\left(\beta_k^2\fronorm{\tilde{\vec{V}}^{(k+1)}-\nabla F(\x^{(k+1)})}+\beta_k^2\fronorm{\nabla F(\x^{(k+1)})-\nabla F(\x^{(k)})}\right)\nonumber\\
			\le&4\left(\fronorm{\vec{V}^{(k+1)}-\vec{U}^{(k+1)}}+\beta_k^2\fronorm{\vec{R}^{(k)}}\right)\nonumber\\
					&+4\left(\beta_k^2\fronorm{\tilde{\vec{V}}^{(k+1)}-\nabla F(\x^{(k+1)})}+\fronorm{\nabla F(\x^{(k+1)})-\nabla F(\x^{(k)})}\right),\label{lemma:matrix_bounds:d_diff}
		\end{align}
	where the last inequality comes from the assumption that $\beta_k\in(0,1).$ Hence we have
		\begin{align*}
			&\fronorm{\y_\perp^{(k+1)}}\\
			\stack{\eqref{lemma:matrix_bounds:d_diff}}{\le}&\enskip(1+c_1)\tilde{\rho}^2\fronorm{\y_\perp^{(k)}}+4(1+\frac{1}{c_1})\left(\fronorm{\vec{V}^{(k+1)}-\vec{U}^{(k+1)}}+\beta_k^2\fronorm{\vec{R}^{(k)}}\right)\\
					&+4(1+\frac{1}{c_1})\left(\beta_k^2\fronorm{\tilde{\vec{V}}^{(k+1)}-\nabla F(\x^{(k+1)})}+\fronorm{\nabla F(\x^{(k+1)})-\nabla F(\x^{(k)})}\right).\\
		\end{align*}
	Letting $c_1=\frac{1}{\tilde{\rho}}-1>0$ and then first taking the expectation with respect to the samples and utilizing~\eqref{assumption:smoothness} and~\eqref{unbiased_assumption} on the above two inequalities and then taking the full expectation, completes the proof.
\end{proof}

Our analysis relies on bounding the gradient error term defined in~\eqref{matrix_error}. Hence, we present the following two Lemmas which define a recursive error bound given either~\eqref{unbiased_assumption} or~\eqref{v2_update} holds for the unbiased estimator $\tilde{\vec{v}}_i^{(k+1)}$ in~\eqref{base_var_reduction}.

\begin{lemma}\label{lemma:hsgd_error}
	Suppose $\{\vec{d}_i^{(t)}\}_{t=0}^{(k)}$ is updated by~\eqref{base_var_reduction} such that $\tilde{\vec{v}}_i$ satisfies~\eqref{unbiased_assumption} for all iterates $t=0,\dots,k$, for each agent $i=1,\dots,N$. Then at iteration $k+1$, the following bound holds
		\begin{equation}\label{lemma:hsgd_error:bound}
			\Exp\fronorm{\vec{R}^{(k+1)}}\le N\beta_k^2\hat{\sigma}^2+(1-\beta_k)^2L^2\Exp\fronorm{\x^{(k+1)}-\x^{(k)}}+(1-\beta_k)^2\Exp\fronorm{\vec{R}^{(k)}}.
		\end{equation}
\end{lemma}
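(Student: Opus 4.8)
\emph{Proof proposal.} The plan is to track the per-agent error $\vec{r}_i^{(k+1)}=\vec{d}_i^{(k+1)}-\nabla f_i(\blx_i^{(k+1)})$ (so that $\fronorm{\vec{R}^{(k+1)}}=\sum_{i=1}^N\norm{\vec{r}_i^{(k+1)}}_2^2$) and to exploit the martingale-difference structure that the update~\eqref{base_var_reduction} is designed to create. Substituting~\eqref{base_var_reduction} and writing $\nabla f_i(\blx_i^{(k+1)})=(1-\beta_k)\nabla f_i(\blx_i^{(k+1)})+\beta_k\nabla f_i(\blx_i^{(k+1)})$, one gets the decomposition
\[
  \vec{r}_i^{(k+1)}=(1-\beta_k)\vec{r}_i^{(k)}+(1-\beta_k)\vec{a}_i^{(k+1)}+\beta_k\vec{b}_i^{(k+1)},
\]
where $\vec{a}_i^{(k+1)}\triangleq\left(\vec{v}_i^{(k+1)}-\vec{u}_i^{(k+1)}\right)-\left(\nabla f_i(\blx_i^{(k+1)})-\nabla f_i(\blx_i^{(k)})\right)$ and $\vec{b}_i^{(k+1)}\triangleq\tilde{\vec{v}}_i^{(k+1)}-\nabla f_i(\blx_i^{(k+1)})$.

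Next I would condition on the filtration $\mathcal{F}_k$ generated by all randomness up to and including the computation of $\x^{(k+1)}$, so that $\vec{r}_i^{(k)}$, $\blx_i^{(k)}$ and $\blx_i^{(k+1)}$ are $\mathcal{F}_k$-measurable while the fresh batches $B_i^{(k+1)}$ and $\tilde{B}_i^{(k+1)}$ are not. Since $\vec{v}_i^{(k+1)}$ and $\vec{u}_i^{(k+1)}$ in~\eqref{storm_u_v} are averages over the \emph{same} batch $B_i^{(k+1)}$, Assumption~\ref{assumption:objective_function}(iii) gives $\Exp[\vec{a}_i^{(k+1)}\mid\mathcal{F}_k]=\vec{0}$; unbiasedness of $\tilde{\vec{v}}_i^{(k+1)}$ gives $\Exp[\vec{b}_i^{(k+1)}\mid\mathcal{F}_k]=\vec{0}$; and by~\eqref{unbiased_assumption} the batch $\tilde{B}_i^{(k+1)}$ is independent of $B_i^{(k+1)}$, so $\vec{a}_i^{(k+1)}$ and $\vec{b}_i^{(k+1)}$ are conditionally independent given $\mathcal{F}_k$. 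Expanding $\norm{\vec{r}_i^{(k+1)}}_2^2$ and taking $\Exp[\,\cdot\mid\mathcal{F}_k]$ therefore annihilates all three cross terms, leaving
\[
  \Exp\!\left[\norm{\vec{r}_i^{(k+1)}}_2^2\mid\mathcal{F}_k\right]=(1-\beta_k)^2\norm{\vec{r}_i^{(k)}}_2^2+(1-\beta_k)^2\Exp\!\left[\norm{\vec{a}_i^{(k+1)}}_2^2\mid\mathcal{F}_k\right]+\beta_k^2\Exp\!\left[\norm{\vec{b}_i^{(k+1)}}_2^2\mid\mathcal{F}_k\right].
\]

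It then remains to bound the two variance terms. The $\vec{b}_i^{(k+1)}$ term is controlled directly by~\eqref{unbiased_assumption}: $\Exp[\norm{\vec{b}_i^{(k+1)}}_2^2\mid\mathcal{F}_k]\le\hat{\sigma}^2$. For $\vec{a}_i^{(k+1)}$, I would note that it is a centered average of $m$ independent summands, so its conditional second moment is at most that of a single (uncentered) summand, $\Exp_\xi\norm{\nabla f_i(\blx_i^{(k+1)};\xi)-\nabla f_i(\blx_i^{(k)};\xi)}_2^2$, which is $\le L^2\norm{\blx_i^{(k+1)}-\blx_i^{(k)}}_2^2$ by the mean-squared smoothness~\eqref{assumption:smoothness}; equivalently, one drops the centering and applies Jensen's inequality directly to $\vec{v}_i^{(k+1)}-\vec{u}_i^{(k+1)}=\tfrac1m\sum_{\xi}\left(\nabla f_i(\blx_i^{(k+1)};\xi)-\nabla f_i(\blx_i^{(k)};\xi)\right)$. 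Summing the per-agent recursions over $i=1,\dots,N$ (no cross-agent independence is needed, since one just sums the squared errors by linearity), identifying $\sum_i\norm{\vec{r}_i^{(k)}}_2^2=\fronorm{\vec{R}^{(k)}}$, $\sum_i\norm{\vec{r}_i^{(k+1)}}_2^2=\fronorm{\vec{R}^{(k+1)}}$ and $\sum_i\norm{\blx_i^{(k+1)}-\blx_i^{(k)}}_2^2=\fronorm{\x^{(k+1)}-\x^{(k)}}$, and taking the total expectation yields~\eqref{lemma:hsgd_error:bound}. The only real subtlety is the conditioning bookkeeping: the fact that $\vec{u}_i^{(k+1)}$ and $\vec{v}_i^{(k+1)}$ share a batch is precisely what turns the gradient-difference term into an $\mathcal{O}(\fronorm{\x^{(k+1)}-\x^{(k)}})$ quantity rather than an $\mathcal{O}(\sigma^2)$ one, and the independence of $\tilde{B}_i^{(k+1)}$ from $B_i^{(k+1)}$ is what kills the $\langle\vec{a}_i^{(k+1)},\vec{b}_i^{(k+1)}\rangle$ cross term; the rest is routine algebra.
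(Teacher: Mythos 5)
Your proposal is correct and follows essentially the same route as the paper's proof: the identical three-term decomposition of $\vec{r}_i^{(k+1)}$, cancellation of the cross terms via conditional unbiasedness and the independence of $\tilde{B}_i^{(k+1)}$ from $B_i^{(k+1)}$, the bound $\hat\sigma^2$ on the $\tilde{\vec{v}}$ variance, and the bound of the centered SARAH term by $\Exp_B\|\vec{v}_i^{(k+1)}-\vec{u}_i^{(k+1)}\|_2^2\le L^2\|\blx_i^{(k+1)}-\blx_i^{(k)}\|_2^2$ via Jensen and mean-squared smoothness. The conditioning bookkeeping you describe matches the paper's use of $\Exp_{(B,\tilde B)}[\cdot]=\Exp_{\tilde B}[\Exp_B[\cdot\mid\tilde B]]$.
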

\begin{proof}
	The proof follows the same logic as the proof of Lemmas 3 and 4 in~\cite{tran22}, but is included here for the sake of completeness. For sake of brevity, define $B\triangleq B_i^{(k+1)}$ and $\tilde{B}\triangleq \tilde{B}_i^{(k+1)}$. Then for each agent $i$, by~\eqref{unbiased_assumption} and the definition of $\vec{r}_i^{(k+1)}$ in~\eqref{error_term}, it holds that
		\begin{align}
			&\Exp_{(B,\tilde{B})}\norm{\vec{r}_i^{(k+1)}}_2^2\nonumber\\
			\stack{\eqref{base_var_reduction}}{=}&\enskip\Exp_{(B,\tilde{B})}\norm{(1-\beta_k)\vec{d}_i^{(k)}+(1-\beta_k)\left(\vec{v}_i^{(k+1)}-\vec{u}_i^{(k+1)}\right)+\beta_k\tilde{\vec{v}}_i^{(k+1)}-\nabla f_i(\blx_i^{(k+1)})}_2^2\nonumber\\
			=&\Exp_{(B,\tilde{B})}\left((1-\beta_k)^2\norm{\vec{d}_i^{(k)}-\nabla f_i(\blx_i^{(k)})}_2^2+\beta_k^2\norm{\tilde{\vec{v}}_i^{(k+1)}-\nabla f_i(\blx_i^{(k+1)})}_2^2\right)\nonumber\\
				&+\Exp_{(B,\tilde{B})}\left((1-\beta_k)^2\norm{\left(\vec{v}_i^{(k+1)}-\nabla f_i(\blx_i^{(k+1)})\right)-\left(\vec{u}_i^{(k+1)}-\nabla f_i(\blx_i^{(k)})\right)}_2^2\right)\nonumber\\
				&+2(1-\beta_k)^2\Exp_{(B,\tilde{B})}\ip{\vec{d}_i^{(k)}-\nabla f_i(\blx_i^{(k)}),\left(\vec{v}_i^{(k+1)}-\nabla f_i(\blx_i^{(k+1)})\right)-\left(\vec{u}_i^{(k+1)}-\nabla f_i(\blx_i^{(k)})\right)}\nonumber\\
				&+2\beta_k(1-\beta_k)\Exp_{(B,\tilde{B})}\ip{\vec{d}_i^{(k)}-\nabla f_i(\blx_i^{(k)}),\tilde{\vec{v}}_i^{(k+1)}-\nabla f_i(\blx_i^{(k+1)})}\nonumber\\
				&+2\beta_k(1-\beta_k)\Exp_{(B,\tilde{B})}\ip{\tilde{\vec{v}}_i^{(k+1)}-\nabla f_i(\blx_i^{(k+1)}),\left(\vec{v}_i^{(k+1)}-\nabla f_i(\blx_i^{(k+1)})\right)-\left(\vec{u}_i^{(k+1)}-\nabla f_i(\blx_i^{(k)})\right)},\nonumber\\\label{lemma:hsgd_error:eqn1}
		\end{align}
	where the second equality comes from adding and subtracting $\beta_k\nabla f_i(\blx_i^{(k+1)})$ and $(1-\beta_k)\nabla f_i(\blx_i^{(k)})$ and expanding the norm squared. The first two inner products evaluate to zero by the unbiasedness in Assumption~\ref{assumption:objective_function} (\textit{iv}). Next, since all $\xi\in B$ are independent from all $\tilde{\xi}\in\tilde{B}$, it holds by $\Exp_{(B,\tilde{B})}[\cdot]=\Exp_{\tilde{B}}\left[\Exp_{B}\left[\cdot\big|\tilde{B}\right]\right]$ that the final inner product is zero. Using the unbiasedness assumption of $\vec{v}_i^{(k+1)}$ and $\vec{u}_i^{(k+1)}$, we have
		\begin{align}
				&\Exp_{(B,\tilde{B})}\left((1-\beta_k)^2\norm{\left(\vec{v}_i^{(k+1)}-\nabla f_i(\blx_i^{(k+1)})\right)-\left(\vec{u}_i^{(k+1)}-\nabla f_i(\blx_i^{(k)})\right)}_2^2\right)\nonumber\\
				=&\Exp_{(B,\tilde{B})}(1-\beta_k)^2\left(\norm{\vec{v}_i^{(k+1)}-\vec{u}_i^{(k+1)}}_2^2+\norm{\nabla f_i(\blx_i^{(k+1)})-\nabla f_i(\blx_i^{(k)})}_2^2\right)\nonumber\\
						&-\Exp_{(B,\tilde{B})}\left[2(1-\beta_k)^2\ip{\vec{v}_i^{(k+1)}-\vec{u}_i^{(k+1)},\nabla f_i(\blx_i^{(k+1)})-\nabla f_i(\blx_i^{(k)})}\right]\nonumber\\
				=&(1-\beta_k)^2\left(\Exp_{B}\norm{\vec{v}_i^{(k+1)}-\vec{u}_i^{(k+1)}}_2^2-\norm{\nabla f_i(\blx_i^{(k+1)})-\nabla f_i(\blx_i^{(k)})}_2^2\right)\nonumber\\
				\le&(1-\beta_k)^2\Exp_{B}\norm{\vec{v}_i^{(k+1)}-\vec{u}_i^{(k+1)}}_2^2.\label{lemma:hsgd_error:total_exp}
		\end{align}
	Summing over the agents $i=1,\dots,N$, utilizing~\eqref{assumption:smoothness},~\eqref{assumption:variance},~\eqref{unbiased_assumption}, and taking the full expectation completes the proof.
\end{proof}

\begin{lemma}\label{lemma:storm_error}
	Suppose $\{\vec{d}_i^{(t)}\}_{t=0}^{(k)}$ is updated by~\eqref{base_var_reduction} with~\eqref{v2_update} for each agent $i=1,\dots,N$. Then at iteration $k+1$, the following bound holds
		\begin{equation}\label{lemma:storm_error:bound}
			\Exp\fronorm{\vec{R}^{(k+1)}}\le2N\beta_k^2\hat{\sigma}^2+2(1-\beta_k)^2L^2\Exp\fronorm{\x^{(k+1)}-\x^{(k)}}+(1-\beta_k)^2\Exp\fronorm{\vec{R}^{(k)}}.
		\end{equation}
\end{lemma}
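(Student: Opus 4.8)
The plan is to mirror the proof of Lemma~\ref{lemma:hsgd_error}, but now account for the fact that $\tilde{\vec{v}}_i^{(k+1)}=\vec{v}_i^{(k+1)}$ shares randomness with the SARAH part $\vec{v}_i^{(k+1)}-\vec{u}_i^{(k+1)}$, so the cross term involving $\tilde{\vec{v}}_i$ and $\vec{v}_i^{(k+1)}-\vec{u}_i^{(k+1)}$ no longer vanishes and must instead be absorbed by a Young-type inequality, which is the source of the extra factor of $2$ in the bound relative to~\eqref{lemma:hsgd_error:bound}.

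Concretely, I would first fix an agent $i$, write $B\triangleq B_i^{(k+1)}$, and substitute~\eqref{base_var_reduction} with $\tilde{\vec{v}}_i^{(k+1)}=\vec{v}_i^{(k+1)}$ into $\vec{r}_i^{(k+1)}=\vec{d}_i^{(k+1)}-\nabla f_i(\blx_i^{(k+1)})$. Adding and subtracting $(1-\beta_k)\nabla f_i(\blx_i^{(k)})$ and $\beta_k\nabla f_i(\blx_i^{(k+1)})$ gives
$\vec{r}_i^{(k+1)}=(1-\beta_k)\vec{r}_i^{(k)}+(1-\beta_k)\big[(\vec{v}_i^{(k+1)}-\nabla f_i(\blx_i^{(k+1)}))-(\vec{u}_i^{(k+1)}-\nabla f_i(\blx_i^{(k)}))\big]+\beta_k(\vec{v}_i^{(k+1)}-\nabla f_i(\blx_i^{(k+1)}))$.
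Taking $\Exp_B$, the cross term between $(1-\beta_k)\vec{r}_i^{(k)}$ and the remaining (zero-mean, $B$-measurable) bracketed quantities vanishes by unbiasedness (Assumption~\ref{assumption:objective_function}(iii)), exactly as before, since $\vec{r}_i^{(k)}$ is independent of $B$. This leaves $\Exp_B\norm{\vec{r}_i^{(k+1)}}_2^2 = (1-\beta_k)^2\norm{\vec{r}_i^{(k)}}_2^2 + \Exp_B\norm{(1-\beta_k)\big[(\vec{v}_i^{(k+1)}-\nabla f_i(\blx_i^{(k+1)}))-(\vec{u}_i^{(k+1)}-\nabla f_i(\blx_i^{(k)}))\big]+\beta_k(\vec{v}_i^{(k+1)}-\nabla f_i(\blx_i^{(k+1)}))}_2^2$.

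The key step is bounding the last expectation. I would apply Young's inequality $\norm{\vec{a}+\vec{b}}_2^2\le 2\norm{\vec{a}}_2^2+2\norm{\vec{b}}_2^2$ to split the two pieces; the first piece, $2(1-\beta_k)^2\Exp_B\norm{(\vec{v}_i^{(k+1)}-\nabla f_i(\blx_i^{(k+1)}))-(\vec{u}_i^{(k+1)}-\nabla f_i(\blx_i^{(k)}))}_2^2$, is handled exactly as in~\eqref{lemma:hsgd_error:total_exp} — expand, use unbiasedness of $\vec{v}_i^{(k+1)},\vec{u}_i^{(k+1)}$ to cancel the inner product partially, drop the nonpositive term $-\norm{\nabla f_i(\blx_i^{(k+1)})-\nabla f_i(\blx_i^{(k)})}_2^2$, and bound $\Exp_B\norm{\vec{v}_i^{(k+1)}-\vec{u}_i^{(k+1)}}_2^2\le L^2\norm{\blx_i^{(k+1)}-\blx_i^{(k)}}_2^2$ via mean-squared $L$-smoothness~\eqref{assumption:smoothness} (the batch averaging does not hurt this bound because of independence within $B$). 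The second piece, $2\beta_k^2\Exp_B\norm{\vec{v}_i^{(k+1)}-\nabla f_i(\blx_i^{(k+1)})}_2^2$, is at most $2\beta_k^2\hat{\sigma}^2$ by~\eqref{v2_update}. Summing over $i=1,\dots,N$ — so that $\sum_i\norm{\blx_i^{(k+1)}-\blx_i^{(k)}}_2^2=\fronorm{\x^{(k+1)}-\x^{(k)}}$ and $\sum_i\norm{\vec{r}_i^{(k)}}_2^2=\fronorm{\vec{R}^{(k)}}$, $\sum_i\norm{\vec{r}_i^{(k+1)}}_2^2=\fronorm{\vec{R}^{(k+1)}}$ — and then taking the full (tower) expectation yields~\eqref{lemma:storm_error:bound}.

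The main obstacle, and the only real departure from Lemma~\ref{lemma:hsgd_error}, is correctly handling the now-nonzero correlation between $\tilde{\vec{v}}_i^{(k+1)}=\vec{v}_i^{(k+1)}$ and the SARAH increment: I must resist the temptation to expand the full square and instead use Young's inequality up front so that the two correlated blocks are separated before any cancellation is attempted. One also has to be a little careful that $\vec{r}_i^{(k)}$ (equivalently $\vec{d}_i^{(k)}$ and $\blx_i^{(k)},\blx_i^{(k+1)}$) are all measurable with respect to the past and independent of the fresh batch $B_i^{(k+1)}$, which is precisely what the standing assumption on the batches guarantees; this justifies both the vanishing of the first cross term and pulling $\norm{\vec{r}_i^{(k)}}_2^2$ and $\norm{\blx_i^{(k+1)}-\blx_i^{(k)}}_2^2$ outside $\Exp_B$. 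Everything else is a routine repetition of the computation in the previous lemma.
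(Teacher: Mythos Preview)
Your proposal is correct and follows essentially the same route as the paper: decompose $\vec{r}_i^{(k+1)}$ into $(1-\beta_k)\vec{r}_i^{(k)}$ plus the sum of the $\beta_k$- and $(1-\beta_k)$-weighted zero-mean pieces, use unbiasedness to drop the cross term with $\vec{r}_i^{(k)}$, then apply Young's inequality to separate the two correlated pieces before invoking~\eqref{lemma:hsgd_error:total_exp} and~\eqref{v2_update}, sum over agents, and take full expectation. The only cosmetic difference is that the paper first groups the non-$\vec{r}_i^{(k)}$ terms as $\vec{v}_i^{(k+1)}-\nabla f_i(\blx_i^{(k+1)})+(1-\beta_k)(\nabla f_i(\blx_i^{(k)})-\vec{u}_i^{(k+1)})$ and then explicitly adds and subtracts $\beta_k(\vec{v}_i^{(k+1)}-\nabla f_i(\blx_i^{(k+1)}))$ to reach the same split you write down directly.
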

\begin{proof}
	The proof follows from Lemma 2 of~\cite{xu20}, but is included here for sake of completeness. Using~\eqref{base_var_reduction} with~\eqref{v2_update} and defining $B\triangleq B_i^{(k+1)}$, for each agent $i$ we have
		\begin{align*}
			&\Exp_{B}\left[\norm{\vec{r}_i^{(k+1)}}^2\right]\\
			&=\Exp_{B}\left[\norm{\vec{v}_i^{(k+1)}-\nabla f_i(\xiknew)+(1-\beta_k)\left(\nabla f_i(\xik)-\vec{u}_i^{(k+1)}\right)+(1-\beta_k)\vec{r}_i^{(k)}}_2^2\right]\\
			&=\Exp_{B}\left[\norm{\vec{v}_i^{(k+1)}-\nabla f_i(\xiknew)+(1-\beta_k)\left(\nabla f_i(\xik)-\vec{u}_i^{(k+1)}\right)}_2^2\right]+(1-\beta_k)^2\norm{\vec{r}_i^{(k)}}_2^2,
		\end{align*}
	where we have used
		\begin{align*}
			\Exp_{B}\left[\ip{\vec{v}_i^{(k+1)}-\nabla f_i(\xiknew),\vec{r}_i^{(k)}}\right]=0\text{ and }\Exp_{B}\left[\ip{\vec{u}_i^{(k+1)}-\nabla f_i(\xik),\vec{r}_i^{(k)}}\right]=0
		\end{align*}
	by the definition of the $\vec{v}_i$ and $\vec{u}_i$ and the unbiasedness in Assumption~\ref{assumption:objective_function} (\textit{iv}). Adding and subtracting $\beta_k\left(\vec{v}_i^{(k+1)}-\nabla f_i(\xiknew)\right)$ inside of the norm of the first term and using Young's inequality results in
		\begin{align*}
					\Exp_{B}\left[\norm{\vec{r}_i^{(k+1)}}^2\right]&\le2\beta_k^2\Exp_{B}\norm{\vec{v}_i^{(k+1)}-\nabla f_i(\xiknew)}_2^2+(1-\beta_k)^2\norm{\vec{r}_i^{(k)}}^2\\
						&\quad+2(1-\beta_k)^2\Exp_{B}\norm{\left(\vec{v}_i^{(k+1)}-\nabla f_i(\xiknew)\right)+\left(\nabla f_i(\xik)-\vec{u}_i^{(k+1)}\right)}_2^2.
		\end{align*}
	Applying~\eqref{lemma:hsgd_error:total_exp} to the last term above, summing over all agents $i=1,\dots,N$, utilizing both~\eqref{assumption:smoothness} and~\eqref{v2_update}, and taking the full expectation completes the proof.
\end{proof}

\begin{lemma}\label{lemma:avg_error}
	Suppose $\{\vec{d}_i^{(t)}\}_{t=0}^{(k)}$ is updated by~\eqref{base_var_reduction} such that $\tilde{\vec{v}}_i$ satisfies~\eqref{unbiased_assumption} for each agent $i=1,\dots,N$. Then at iteration $k+1$, the following bound holds
		\begin{equation}\label{lemma:avg_error:bound}
			\Exp\norm{\bar{\vec{r}}^{(k+1)}}_2^2\le(1-\beta_k)^2\Exp\norm{\bar{\vec{r}}^{(k)}}_2^2+\frac{(1-\beta_k)^2L^2}{N^2}\Exp\fronorm{\x^{(k+1)}-\x^{(k)}}+\frac{\beta_k^2\hat{\sigma}^2}{N}.
		\end{equation}
\end{lemma}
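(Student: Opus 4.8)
The plan is to mirror the argument of Lemma~\ref{lemma:hsgd_error}, but to track the consensus-averaged error $\bar{\vec{r}}^{(k+1)}=\frac{1}{N}\sum_{i=1}^N\vec{r}_i^{(k+1)}$ directly rather than the agent-wise sum, so that the mutual independence of the per-agent batches can be exploited to extract an extra $1/N$ factor. Let $\mathcal{F}_k$ be the $\sigma$-algebra generated by all randomness through iteration $k$, so that $\x^{(k+1)}$, $\d^{(k)}$, and hence each $\vec{r}_i^{(k)}$ and each increment $\blx_i^{(k+1)}-\blx_i^{(k)}$ are $\mathcal{F}_k$-measurable. First I would expand the update~\eqref{base_var_reduction}: adding and subtracting $(1-\beta_k)\nabla f_i(\blx_i^{(k)})$ and $\beta_k\nabla f_i(\blx_i^{(k+1)})$ yields the decomposition $\vec{r}_i^{(k+1)}=(1-\beta_k)\vec{r}_i^{(k)}+(1-\beta_k)\vec{b}_i+\beta_k\vec{c}_i$, where $\vec{b}_i\triangleq\bigl(\vec{v}_i^{(k+1)}-\nabla f_i(\blx_i^{(k+1)})\bigr)-\bigl(\vec{u}_i^{(k+1)}-\nabla f_i(\blx_i^{(k)})\bigr)$ and $\vec{c}_i\triangleq\tilde{\vec{v}}_i^{(k+1)}-\nabla f_i(\blx_i^{(k+1)})$. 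By the unbiasedness in~\eqref{assumption:variance} and by~\eqref{unbiased_assumption}, both satisfy $\Exp[\vec{b}_i\mid\mathcal{F}_k]=\vec{0}$ and $\Exp[\vec{c}_i\mid\mathcal{F}_k]=\vec{0}$.

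Next I would average, giving $\bar{\vec{r}}^{(k+1)}=(1-\beta_k)\bar{\vec{r}}^{(k)}+\frac{1-\beta_k}{N}\sum_i\vec{b}_i+\frac{\beta_k}{N}\sum_i\vec{c}_i$, and take $\Exp[\norm{\cdot}_2^2\mid\mathcal{F}_k]$, expanding the square. Three families of cross terms must be argued away: (i) inner products of the $\mathcal{F}_k$-measurable term $\bar{\vec{r}}^{(k)}$ with $\sum_i\vec{b}_i$ or $\sum_i\vec{c}_i$ vanish by conditional unbiasedness; (ii) inner products $\ip{\vec{b}_i,\vec{b}_j}$ and $\ip{\vec{c}_i,\vec{c}_j}$ for $i\neq j$ vanish because $B_i^{(k+1)}$ and $B_j^{(k+1)}$ (resp. $\tilde{B}_i^{(k+1)}$ and $\tilde{B}_j^{(k+1)}$) are mutually independent; and (iii) all inner products $\ip{\vec{b}_i,\vec{c}_j}$ vanish by the tower property over $B_i^{(k+1)}$ and $\tilde{B}_i^{(k+1)}$, exactly as in the proof of Lemma~\ref{lemma:hsgd_error} (using that $\tilde{B}_i^{(k+1)}$ is independent of $B_i^{(k+1)}$). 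What remains is $\Exp[\norm{\bar{\vec{r}}^{(k+1)}}_2^2\mid\mathcal{F}_k]=(1-\beta_k)^2\norm{\bar{\vec{r}}^{(k)}}_2^2+\frac{(1-\beta_k)^2}{N^2}\sum_i\Exp[\norm{\vec{b}_i}_2^2\mid\mathcal{F}_k]+\frac{\beta_k^2}{N^2}\sum_i\Exp[\norm{\vec{c}_i}_2^2\mid\mathcal{F}_k]$.

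It then remains to bound the two second-moment sums. For $\vec{c}_i$, assumption~\eqref{unbiased_assumption} gives $\Exp[\norm{\vec{c}_i}_2^2\mid\mathcal{F}_k]\le\hat{\sigma}^2$, so the last term is at most $\beta_k^2\hat{\sigma}^2/N$. For $\vec{b}_i$, I would observe that it is the deviation of $\vec{v}_i^{(k+1)}-\vec{u}_i^{(k+1)}$ from its conditional mean $\nabla f_i(\blx_i^{(k+1)})-\nabla f_i(\blx_i^{(k)})$, hence $\Exp[\norm{\vec{b}_i}_2^2\mid\mathcal{F}_k]\le\Exp[\norm{\vec{v}_i^{(k+1)}-\vec{u}_i^{(k+1)}}_2^2\mid\mathcal{F}_k]$ --- this is precisely the per-agent version of~\eqref{lemma:hsgd_error:total_exp} --- and then Jensen's inequality over the $m$ i.i.d.\ samples in $B_i^{(k+1)}$ combined with mean-squared $L$-smoothness~\eqref{assumption:smoothness} gives $\Exp[\norm{\vec{b}_i}_2^2\mid\mathcal{F}_k]\le L^2\norm{\blx_i^{(k+1)}-\blx_i^{(k)}}_2^2$. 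Summing over $i$ converts $\frac{1}{N^2}\sum_i$ into $\frac{L^2}{N^2}\fronorm{\x^{(k+1)}-\x^{(k)}}$, and taking the full expectation yields~\eqref{lemma:avg_error:bound}.

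The norm expansion and the tower-property/unbiasedness arguments are routine and already carried out in Lemma~\ref{lemma:hsgd_error}; I would not redo them in detail. The one place that genuinely needs care --- and the reason this lemma is separated from Lemma~\ref{lemma:hsgd_error} --- is step (ii): the double sum $\frac{1}{N^2}\sum_{i,j}$ must collapse to $\frac{1}{N^2}\sum_i$ under the mutual independence of the per-agent batches, and this collapse is what produces the network-size gain, i.e.\ the $\frac{(1-\beta_k)^2L^2}{N^2}\fronorm{\cdot}$ and $\frac{\beta_k^2\hat{\sigma}^2}{N}$ terms, rather than the weaker $\frac{(1-\beta_k)^2L^2}{N}\fronorm{\cdot}$ and $\beta_k^2\hat{\sigma}^2$ one would get by naively combining Jensen's inequality with Lemma~\ref{lemma:hsgd_error}. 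Tracking these constants correctly is the whole content of the lemma.
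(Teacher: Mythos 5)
Your proposal is correct and follows essentially the same route as the paper's proof: the identical decomposition $\bar{\vec{r}}^{(k+1)}=(1-\beta_k)\bar{\vec{r}}^{(k)}+\frac{1-\beta_k}{N}\sum_i\vec{b}_i+\frac{\beta_k}{N}\sum_i\vec{c}_i$, the same cancellation of cross terms via conditional unbiasedness, mutual independence across agents, and the tower property over $B_i^{(k+1)}$ and $\tilde{B}_i^{(k+1)}$, and the same second-moment bounds yielding the $1/N^2$ and $1/N$ factors. Your emphasis on the collapse of the double sum $\frac{1}{N^2}\sum_{i,j}$ to $\frac{1}{N^2}\sum_i$ as the source of the network-size gain is exactly the content of the paper's equations for the $\hat{\nabla}_i$ cross terms.
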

\begin{proof}
	The proof follows from Lemma 3 of~\cite{xin21hsgd}, but is included here for sake of completeness. Following similar notation to the proof of Lemma~\ref{lemma:hsgd_error}, by~\eqref{base_var_reduction}, it holds that
		\begin{align*}
			\vec{r}_i^{(k+1)}\stack{\eqref{error_term}}{=}&\enskip(1-\beta_k)\left(\vec{d}_i^{(k)}+\vec{v}_i^{(k+1)}-\vec{u}_i^{(k+1)}\right)+\beta_k\tilde{\vec{v}}_i^{(k+1)}-\nabla f_i(\blx_i^{(k+1)})\\
			=&\beta_k\left(\tilde{\vec{v}}_i^{(k+1)}-\nabla f_i(\blx_i^{(k+1)})\right)+(1-\beta_k)\left(\vec{d}_i^{(k)}-\nabla f_i(\blx_i^{(k)})+\vec{v}_i^{(k+1)}-\vec{u}_i^{(k+1)}\right)\\
				&+(1-\beta_k)\left(\nabla f_i(\blx_i^{(k)})-\nabla f_i(\blx_i^{(k+1)})\right).\\
		\end{align*}
	Taking the average results in
		\begin{align}
			\bar{\vec{r}}^{(k+1)}=&(1-\beta_k)\bar{\vec{r}}^{(k)}+\frac{\beta_k}{N}\sum_{i=1}^N\left(\tilde{\vec{v}}_i^{(k+1)}-\nabla f_i(\blx_i^{(k+1)})\right)\nonumber\\
				&+\frac{(1-\beta_k)}{N}\sum_{i=1}^N\left(\vec{v}_i^{(k+1)}-\vec{u}_i^{(k+1)}+\nabla f_i(\blx_i^{(k)})-\nabla f_i(\blx_i^{(k+1)})\right).\label{lemma:avg_error:eqn1}
		\end{align}
	Defining $B\triangleq\left\{\xi:\xi\in\bigcup_{i=1}^NB_i^{(k+1)}\right\}$ and $\tilde{B}\triangleq\left\{\xi:\xi\in\bigcup_{i=1}^N\tilde{B}_i^{(k+1)}\right\}$ we take the norm squared and compute $\Exp_{(B,\tilde{B})}$, resulting in
		\begin{align}
					&\Exp_{(B,\tilde{B})}\norm{\bar{\vec{r}}^{(k+1)}}_2^2\nonumber\\
					=&\Exp_{(B,\tilde{B})}\left((1-\beta_k)^2\norm{\bar{\vec{r}}^{(k)}}_2^2+\frac{\beta_k^2}{N^2}\norm{\sum_{i=1}^N\left(\tilde{\vec{v}}_i^{(k+1)}-\nabla f_i(\blx_i^{(k+1)})\right)}_2^2\right)\nonumber\\
						&+\Exp_{(B,\tilde{B})}\left(\frac{(1-\beta_k)^2}{N^2}\norm{\sum_{i=1}^N\left(\left(\vec{v}_i^{(k+1)}-\nabla f_i(\blx_i^{(k+1)})\right)-\left(\vec{u}_i^{(k+1)}-\nabla f_i(\blx_i^{(k)})\right)\right)}_2^2\right)\nonumber\\
						&+\frac{2(1-\beta_k)^2}{N}\Exp_{(B,\tilde{B})}\sum_{i=1}^N\ip{\bar{\vec{r}}^{(k)},\left(\vec{v}_i^{(k+1)}-\nabla f_i(\blx_i^{(k+1)})\right)-\left(\vec{u}_i^{(k+1)}-\nabla f_i(\blx_i^{(k)})\right)}\nonumber\\
						&+\frac{2\beta_k(1-\beta_k)}{N}\Exp_{(B,\tilde{B})}\sum_{i=1}^N\ip{\bar{\vec{r}}^{(k)},\tilde{\vec{v}}_i^{(k+1)}-\nabla f_i(\blx_i^{(k+1)})}\nonumber\\
						&+\frac{2\beta_k(1-\beta_k)}{N^2}\Exp_{(B,\tilde{B})}\sum_{i=1}^N\sum_{j=1}^N\ip{\tilde{\vec{v}}_i^{(k+1)}-\nabla f_i(\blx_i^{(k+1)}),\left(\vec{v}_j^{(k+1)}-\nabla f_j(\blx_j^{(k+1)})\right)-\left(\vec{u}_j^{(k+1)}-\nabla f_j(\blx_j^{(k)})\right)}.\label{lemma:avg_error:eqn2}
		\end{align}
	Similar to the proof of Lemma~\ref{lemma:hsgd_error}, we have the first two inner products evaluate to zero by the unbiasedness in Assumption~\ref{assumption:objective_function} (\textit{iv}). Next, since all $\xi\in B$ are independent from all $\tilde{\xi}\in\tilde{B}$, it holds by $\Exp_{(B,\tilde{B})}[\cdot]=\Exp_{\tilde{B}}\left[\Exp_{B}\left[\cdot\big|\tilde{B}\right]\right]$ that the final inner product is zero. Define
		\begin{align*}
		\hat{\nabla}_i^{(k+1)}\triangleq\left(\vec{v}_i^{(k+1)}-\nabla f_i(\blx_i^{(k+1)})\right)-\left(\vec{u}_i^{(k+1)}-\nabla f_i(\blx_i^{(k)})\right)
		\end{align*}
	for all $i=1,\dots,N$. Hence, by using $\xi\in B$ are independent from $\tilde{\xi}\in\tilde{B}$,
		\begin{align}
			&\Exp_{(B,\tilde{B})}\left(\frac{(1-\beta_k)^2}{N^2}\norm{\sum_{i=1}^N\left(\left(\vec{v}_i^{(k+1)}-\nabla f_i(\blx_i^{(k+1)})\right)-\left(\vec{u}_i^{(k+1)}-\nabla f_i(\blx_i^{(k)})\right)\right)}_2^2\right)\nonumber\\
			=&\Exp_{(B,\tilde{B})}\left(\frac{(1-\beta_k)^2}{N^2}\sum_{i=1}^N\norm{\left(\left(\vec{v}_i^{(k+1)}-\nabla f_i(\blx_i^{(k+1)})\right)-\left(\vec{u}_i^{(k+1)}-\nabla f_i(\blx_i^{(k)})\right)\right)}_2^2\right)\nonumber\\
				&+\frac{(1-\beta_k)^2}{N^2}\sum_{i\ne j}\Exp_{(B,\tilde{B})}\ip{\hat{\nabla}_i^{(k+1)},\hat{\nabla}_j^{(k+1)}}\nonumber\\
			=&\Exp_{(B,\tilde{B})}\left(\frac{(1-\beta_k)^2}{N^2}\sum_{i=1}^N\norm{\left(\left(\vec{v}_i^{(k+1)}-\nabla f_i(\blx_i^{(k+1)})\right)-\left(\vec{u}_i^{(k+1)}-\nabla f_i(\blx_i^{(k)})\right)\right)}_2^2\right)\nonumber\\
			\stack{\eqref{lemma:hsgd_error:total_exp}}{\le}&\enskip\frac{(1-\beta_k)^2}{N^2}\sum_{i=1}^N\Exp_B\norm{\vec{v}_i^{(k+1)}-\vec{u}_i^{(k+1)}}_2^2.\label{lemma:avg_error:eqn3}
		\end{align}
	By similar logic,
		\begin{align}
			&\Exp_{\tilde{B}}\frac{\beta_k^2}{N^2}\norm{\sum_{i=1}^N\left(\tilde{\vec{v}}_i^{(k+1)}-\nabla f_i(\blx_i^{(k+1)})\right)}_2^2\nonumber\\
			=&\frac{\beta_k^2}{N^2}\sum_{i=1}^N\Exp_{\tilde{B}}\norm{\left(\tilde{\vec{v}}_i^{(k+1)}-\nabla f_i(\blx_i^{(k+1)})\right)}_2^2\nonumber\\
				&+\frac{\beta_k^2}{N^2}\sum_{i\ne j}\Exp_{\tilde{B}}\ip{\tilde{\vec{v}}_i^{(k+1)}-\nabla f_i(\blx_i^{(k+1)}),\tilde{\vec{v}}_j^{(k+1)}-\nabla f_j(\blx_j^{(k+1)})}\nonumber\\
			=&\frac{\beta_k^2}{N^2}\sum_{i=1}^N\Exp_{\tilde{B}}\norm{\left(\tilde{\vec{v}}_i^{(k+1)}-\nabla f_i(\blx_i^{(k+1)})\right)}_2^2.\label{lemma:avg_error:eqn4}
		\end{align}
	Plugging~\eqref{lemma:avg_error:eqn3} to~\eqref{lemma:avg_error:eqn4} into~\eqref{lemma:avg_error:eqn2} and taking the full expectation yields
		\begin{align*}
			&\Exp\norm{\bar{\vec{r}}^{(k+1)}}_2^2\\
			\le&(1-\beta_k)^2\Exp\norm{\bar{\vec{r}}^{(k)}}_2^2+\frac{(1-\beta_k)^2}{N^2}\sum_{i=1}^N\Exp\norm{\vec{v}_i^{(k+1)}-\vec{u}_i^{(k+1)}}_2^2+\frac{\beta_k^2\hat{\sigma}^2}{N}\\
			\stack{\eqref{assumption:smoothness}}{\le}&(1-\beta_k)^2\Exp\norm{\bar{\vec{r}}^{(k)}}_2^2+\frac{(1-\beta_k)^2L^2}{N^2}\Exp\fronorm{\x^{(k+1)}-\x^{(k)}}+\frac{\beta_k^2\hat{\sigma}^2}{N},
		\end{align*}
	where we have used~\eqref{assumption:variance} and the equivalence of the Frobenius norm to the sum of the squared 2-norms. This completes the proof.
\end{proof}

\begin{lemma}\label{lemma:avg_error_storm}
	Suppose $\{\vec{d}_i^{(t)}\}_{t=0}^{(k)}$ is updated by~\eqref{base_var_reduction} such that $\tilde{\vec{v}}_i$ satisfies~\eqref{v2_update} for each agent $i=1,\dots,N$. Then at iteration $k+1$, the following bound holds
		\begin{equation}\label{lemma:avg_error:bound_storm}
			\Exp\norm{\bar{\vec{r}}^{(k+1)}}_2^2\le(1-\beta_k)^2\Exp\norm{\bar{\vec{r}}^{(k)}}_2^2+\frac{2(1-\beta_k)^2L^2}{N^2}\Exp\fronorm{\x^{(k+1)}-\x^{(k)}}+\frac{2\beta_k^2\hat{\sigma}^2}{N}.
		\end{equation}
\end{lemma}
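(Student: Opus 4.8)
The plan is to mirror the proof of Lemma~\ref{lemma:avg_error}, but to replace the independent‑batch cancellation used there (which relied on a separate batch $\tilde{B}_i^{(k+1)}$) with the single‑batch splitting from the proof of Lemma~\ref{lemma:storm_error}; the cost of this replacement is precisely the extra factors of $2$ appearing in~\eqref{lemma:avg_error:bound_storm}. First I would substitute~\eqref{v2_update} into~\eqref{base_var_reduction}, so that $\tilde{\vec{v}}_i^{(k+1)}=\vec{v}_i^{(k+1)}$, and rewrite the per‑agent error exactly as in the first display of the proof of Lemma~\ref{lemma:storm_error}:
\[
\vec{r}_i^{(k+1)}=\big(\vec{v}_i^{(k+1)}-\nabla f_i(\xiknew)\big)+(1-\beta_k)\big(\nabla f_i(\xik)-\vec{u}_i^{(k+1)}\big)+(1-\beta_k)\vec{r}_i^{(k)}.
\]
Averaging over $i=1,\dots,N$ then gives $\bar{\vec{r}}^{(k+1)}=(1-\beta_k)\bar{\vec{r}}^{(k)}+\frac1N\sum_{i=1}^N\big[(\vec{v}_i^{(k+1)}-\nabla f_i(\xiknew))+(1-\beta_k)(\nabla f_i(\xik)-\vec{u}_i^{(k+1)})\big]$.

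Next I would take $\norm{\cdot}_2^2$ and the conditional expectation $\Exp_B$ over the pooled batch $B\triangleq\bigcup_{i=1}^N B_i^{(k+1)}$. Since $\bar{\vec{r}}^{(k)}$ is determined by the history while $\vec{v}_i^{(k+1)}$ and $\vec{u}_i^{(k+1)}$ are conditionally unbiased for $\nabla f_i(\xiknew)$ and $\nabla f_i(\xik)$, the cross term with $\bar{\vec{r}}^{(k)}$ drops, leaving
\[
\Exp_B\norm{\bar{\vec{r}}^{(k+1)}}_2^2=(1-\beta_k)^2\norm{\bar{\vec{r}}^{(k)}}_2^2+\Exp_B\norm{\tfrac1N\textstyle\sum_{i=1}^N\big[(\vec{v}_i^{(k+1)}-\nabla f_i(\xiknew))+(1-\beta_k)(\nabla f_i(\xik)-\vec{u}_i^{(k+1)})\big]}_2^2.
\]
For the last term I would split $\vec{v}_i^{(k+1)}-\nabla f_i(\xiknew)=\beta_k(\vec{v}_i^{(k+1)}-\nabla f_i(\xiknew))+(1-\beta_k)(\vec{v}_i^{(k+1)}-\nabla f_i(\xiknew))$ and regroup the $i$‑th summand as $\beta_k a_i+(1-\beta_k)\hat{\nabla}_i^{(k+1)}$, where $a_i\triangleq\vec{v}_i^{(k+1)}-\nabla f_i(\xiknew)$ and $\hat{\nabla}_i^{(k+1)}\triangleq(\vec{v}_i^{(k+1)}-\nabla f_i(\xiknew))-(\vec{u}_i^{(k+1)}-\nabla f_i(\xik))$ is exactly the quantity already used in the proof of Lemma~\ref{lemma:avg_error}; Young's inequality then yields the bound $2\beta_k^2\Exp_B\norm{\frac1N\sum_i a_i}_2^2+2(1-\beta_k)^2\Exp_B\norm{\frac1N\sum_i\hat{\nabla}_i^{(k+1)}}_2^2$. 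Because the batches $B_i^{(k+1)}$ are mutually independent across agents and $\Exp_B a_i=\Exp_B\hat{\nabla}_i^{(k+1)}=0$, the off‑diagonal terms in both squared sums vanish, collapsing them to $\frac1{N^2}\sum_i\Exp_B\norm{a_i}_2^2$ and $\frac1{N^2}\sum_i\Exp_B\norm{\hat{\nabla}_i^{(k+1)}}_2^2$; this decoupling is where the mutual‑independence hypothesis on $\{B_i^{(k+1)}\}$ is essential, and it is the one step that most needs care.

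Finally I would bound the two per‑agent sums: $\Exp_B\norm{a_i}_2^2\le\hat{\sigma}^2$ is immediate from~\eqref{v2_update} and produces the $\frac{2\beta_k^2\hat{\sigma}^2}{N}$ term, while $\Exp_B\norm{\hat{\nabla}_i^{(k+1)}}_2^2\le\Exp_B\norm{\vec{v}_i^{(k+1)}-\vec{u}_i^{(k+1)}}_2^2\le L^2\norm{\xiknew-\xik}_2^2$, where the first inequality is~\eqref{lemma:hsgd_error:total_exp} (discarding the non‑positive $-\norm{\nabla f_i(\xiknew)-\nabla f_i(\xik)}_2^2$) and the second combines Jensen's inequality with the mean‑squared smoothness~\eqref{assumption:smoothness}. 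Summing over $i$ turns $\frac1{N^2}\sum_i\norm{\xiknew-\xik}_2^2$ into $\frac1{N^2}\fronorm{\x^{(k+1)}-\x^{(k)}}$, and taking the full expectation over the history via the tower property gives~\eqref{lemma:avg_error:bound_storm}. I do not anticipate any genuinely new obstacle relative to Lemmas~\ref{lemma:avg_error} and~\ref{lemma:storm_error}: the only subtlety is the Young's split forced by the single‑batch estimator, which is exactly what separates~\eqref{lemma:avg_error:bound_storm} from~\eqref{lemma:avg_error:bound}.
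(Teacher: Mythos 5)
Your proposal is correct and follows essentially the same route as the paper's proof: the same per-agent decomposition of $\vec{r}_i^{(k+1)}$, the same vanishing of the cross term with $\bar{\vec{r}}^{(k)}$ by conditional unbiasedness, the same Young's split into $\beta_k a_i+(1-\beta_k)\hat{\nabla}_i^{(k+1)}$ (the paper phrases it as adding and subtracting $\frac{\beta_k}{N}\sum_i(\vec{v}_i^{(k+1)}-\nabla f_i(\blx_i^{(k+1)}))$), and the same collapse of the squared sums via mutual independence across agents, exactly as in~\eqref{lemma:avg_error:eqn3}--\eqref{lemma:avg_error:eqn4}. No gaps.
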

\begin{proof}
	The proof follows from Lemma 3 of~\cite{xin21hsgd}, but is included here for sake of completeness. Following similar notation as the proof of Lemma~\ref{lemma:avg_error}, by~\eqref{v2_update}, we define $B\triangleq\left\{\xi:\xi\in\bigcup_{i=1}^NB_i^{(k+1)}\right\}$ to have
				\begin{align*}
					&\Exp_{B}\left[\norm{\bar{\vec{r}}^{(k+1)}}_2^2\right]\\
					&\stack{\eqref{base_var_reduction}}{=}\Exp_{B}\left[\norm{\frac{1}{N}\sum_{i=1}^N\left(\vec{v}_i^{(k+1)}-\nabla f_i(\xiknew)+(1-\beta_k)\left(\nabla f_i(\xik)-\vec{u}_i^{(k+1)}\right)\right)+(1-\beta_k)\bar{\vec{r}}^{(k)}}_2^2\right]\\
					&=\Exp_{B}\left[\norm{\frac{1}{N}\sum_{i=1}^N\left(\vec{v}_i^{(k+1)}-\nabla f_i(\xiknew)+(1-\beta_k)\left(\nabla f_i(\xik)-\vec{u}_i^{(k+1)}\right)\right)}_2^2\right]+(1-\beta_k)^2\norm{\bar{\vec{r}}^{(k)}}_2^2
				\end{align*}
			where we have used, for all $i=1,\dots,N$,
				\begin{align*}
					\Exp_{B}\left[\ip{\vec{v}_i^{(k+1)}-\nabla f_i(\xiknew),\bar{\vec{r}}^{(k)}}\right]=0\text{ and }\Exp_{B}\left[\ip{\vec{u}_i^{(k+1)}-\nabla f_i(\xik),\bar{\vec{r}}^{(k)}}\right]=0
				\end{align*}
			by the definition of the $\vec{v}_i$ and $\vec{u}_i$ and the unbiasedness in Assumption~\ref{assumption:objective_function} (\textit{iv}). Adding and subtracting $\frac{\beta_k}{N}\sum_{i=1}^N\left(\vec{v}_i^{(k+1)}-\nabla f_i(\xiknew)\right)$ inside of the norm of the first term and using Young's inequality results in
				\begin{align*}
							\Exp_{B}\left[\norm{\bar{\vec{r}}^{(k+1)}}_2^2\right]\le&\frac{2\beta_k^2}{N^2}\Exp_{B}\norm{\sum_{i=1}^N\left(\vec{v}_i^{(k+1)}-\nabla f_i(\xiknew)\right)}_2^2+(1-\beta_k)^2\norm{\bar{\vec{r}}^{(k)}}_2^2\\
								&+\frac{2(1-\beta_k)^2}{N^2}\Exp_{B}\norm{\sum_{i=1}^N\left(\left(\vec{v}_i^{(k+1)}-\nabla f_i(\xiknew)\right)+\left(\nabla f_i(\xik)-\vec{u}_i^{(k+1)}\right)\right)}_2^2.
				\end{align*}
			Applying~\eqref{lemma:avg_error:eqn3} and~\eqref{lemma:avg_error:eqn4}, utilizing both~\eqref{assumption:smoothness} and~\eqref{v2_update}, and taking the full expectation completes the proof.
\end{proof}

Before presenting our convergence results, we give the following Lemma which relates relevant terms to a stochastic $\varepsilon$-stationary point as defined in Definition~\ref{def:stationarity}.

\begin{lemma}\label{lemma:stationary_bound}
	For all $k\ge0,$ the following bound holds,
		\begin{equation}\label{lemma:stationary_bound:bound}
			\begin{split}
				&\frac{1}{N}\sum_{i=1}^N\norm{P\left(\vec{z}_i^{(k)},\nabla f(\vec{z}_i^{(k)}),\alpha_k\right)}_2^2+\frac{L^2}{N}\fronorm{\x_\perp^{(k)}}+\frac{L^2}{N}\fronorm{\z_\perp^{(k)}}\\
						\le&\frac{6}{N}\fronorm{\y_\perp^{(k)}}+6\norm{\bar{\vec{r}}^{(k)}}_2^2+\frac{32L^2}{N}\fronorm{\x_\perp^{(k)}}+\frac{2}{N\alpha_k^2}\fronorm{\x^{(k+1)}-\z^{(k)}}.
			\end{split}
		\end{equation}
\end{lemma}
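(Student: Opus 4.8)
The plan is to convert the proximal gradient mapping into the ``algorithmic'' displacement $\tfrac{1}{\alpha_k}(\vec{z}_i^{(k)}-\xiknew)$. Since \eqref{algo:x_update} is exactly $\xiknew=\prox_{\alpha_k r}\!\big(\vec{z}_i^{(k)}-\alpha_k\vec{y}_i^{(k)}\big)$ while, by Definition~\ref{def:prox_grad_map}, $\alpha_k P\big(\vec{z}_i^{(k)},\nabla f(\vec{z}_i^{(k)}),\alpha_k\big)=\vec{z}_i^{(k)}-\prox_{\alpha_k r}\!\big(\vec{z}_i^{(k)}-\alpha_k\nabla f(\vec{z}_i^{(k)})\big)$, subtracting the two and invoking non-expansiveness of $\prox$ (Lemma~\ref{lemma:prox}) gives $\big\|\alpha_k P(\vec{z}_i^{(k)},\nabla f(\vec{z}_i^{(k)}),\alpha_k)-(\vec{z}_i^{(k)}-\xiknew)\big\|_2\le\alpha_k\|\vec{y}_i^{(k)}-\nabla f(\vec{z}_i^{(k)})\|_2$. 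Applying $(a+b)^2\le 2a^2+2b^2$, dividing by $\alpha_k^2$, and averaging over $i$ yields
\[
\frac{1}{N}\sum_{i=1}^N\norm{P(\vec{z}_i^{(k)},\nabla f(\vec{z}_i^{(k)}),\alpha_k)}_2^2\le\frac{2}{N\alpha_k^2}\fronorm{\x^{(k+1)}-\z^{(k)}}+\frac{2}{N}\sum_{i=1}^N\norm{\vec{y}_i^{(k)}-\nabla f(\vec{z}_i^{(k)})}_2^2,
\]
which already isolates the $\tfrac{2}{N\alpha_k^2}\fronorm{\x^{(k+1)}-\z^{(k)}}$ term of the claim.

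The second step bounds the gradient-tracking discrepancy $\|\vec{y}_i^{(k)}-\nabla f(\vec{z}_i^{(k)})\|_2^2$. I would write $\vec{y}_i^{(k)}-\nabla f(\vec{z}_i^{(k)})=(\vec{y}_i^{(k)}-\bar{\bly}^{(k)})+(\bar{\bly}^{(k)}-\nabla f(\vec{z}_i^{(k)}))$, replace $\bar{\bly}^{(k)}$ by $\bar{\vec{d}}^{(k)}$ using Lemma~\ref{lemma:y_d_relation}, and then split $\bar{\vec{d}}^{(k)}-\nabla f(\vec{z}_i^{(k)})=\bar{\vec{r}}^{(k)}+\tfrac{1}{N}\sum_j\big(\nabla f_j(\blx_j^{(k)})-\nabla f_j(\vec{z}_i^{(k)})\big)$ via the definition \eqref{error_term} of $\vec{r}_i^{(k)}$ and $\nabla f=\tfrac{1}{N}\sum_j\nabla f_j$. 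Using $\|a+b+c\|^2\le 3(\|a\|^2+\|b\|^2+\|c\|^2)$, the first summand contributes $\norm{\vec{y}_i^{(k)}-\bar{\bly}^{(k)}}_2^2$, the second $\norm{\bar{\vec{r}}^{(k)}}_2^2$, and the third I would bound by Jensen/Cauchy--Schwarz, then $L$-smoothness of each $\nabla f_j$ (which follows from Assumption~\ref{assumption:objective_function}(ii) by Jensen), then $\norm{\blx_j^{(k)}-\vec{z}_i^{(k)}}_2^2\le 2\norm{\blx_j^{(k)}-\barxk}_2^2+2\norm{\vec{z}_i^{(k)}-\barxk}_2^2$, which is legitimate because $\w_T$ preserves row-averages (Lemma~\ref{lemma:chebyshev}), so $\bar{\z}^{(k)}=\bar{\x}^{(k)}$. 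Averaging over $i$ and recognizing $\sum_i\norm{\vec{y}_i^{(k)}-\bar{\bly}^{(k)}}_2^2=\fronorm{\y_\perp^{(k)}}$, $\sum_j\norm{\blx_j^{(k)}-\barxk}_2^2=\fronorm{\x_\perp^{(k)}}$, $\sum_i\norm{\vec{z}_i^{(k)}-\barxk}_2^2=\fronorm{\z_\perp^{(k)}}$ gives $\tfrac{1}{N}\sum_i\norm{\vec{y}_i^{(k)}-\nabla f(\vec{z}_i^{(k)})}_2^2\le\tfrac{3}{N}\fronorm{\y_\perp^{(k)}}+3\norm{\bar{\vec{r}}^{(k)}}_2^2+\tfrac{6L^2}{N}\big(\fronorm{\x_\perp^{(k)}}+\fronorm{\z_\perp^{(k)}}\big)$.

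The last step is bookkeeping. Since $\z^{(k)}=\w_T(\x^{(k)})$ and $\w_T-\avg=(\w_T-\avg)(\identity-\avg)$, we have $\z_\perp^{(k)}=(\w_T-\avg)\x_\perp^{(k)}$, hence $\fronorm{\z_\perp^{(k)}}\le\tilde{\rho}^2\fronorm{\x_\perp^{(k)}}\le\fronorm{\x_\perp^{(k)}}$ by \eqref{spectral_gap_cheby}. Substituting the step-2 bound into step~1, using this to replace every $\fronorm{\z_\perp^{(k)}}$ by $\fronorm{\x_\perp^{(k)}}$, and finally adding $\tfrac{L^2}{N}\fronorm{\x_\perp^{(k)}}+\tfrac{L^2}{N}\fronorm{\z_\perp^{(k)}}$ to both sides, the coefficient of $\tfrac{L^2}{N}\fronorm{\x_\perp^{(k)}}$ on the right totals $2\cdot 12+1+1=26\le 32$, which gives exactly \eqref{lemma:stationary_bound:bound}. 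I do not foresee a genuine difficulty; the only delicate point is carrying out the three-way decomposition of $\vec{y}_i^{(k)}-\nabla f(\vec{z}_i^{(k)})$ without leaking terms and keeping straight which quantities live on the local iterate $\x^{(k)}$ versus the communicated iterate $\z^{(k)}$ (and exploiting that their row-averages coincide).
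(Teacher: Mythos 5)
Your proposal is correct and follows essentially the same route as the paper's proof: isolate $\tfrac{2}{N\alpha_k^2}\fronorm{\x^{(k+1)}-\z^{(k)}}$ via non-expansiveness of the prox and Young's inequality, then bound $\norm{\vec{y}_i^{(k)}-\nabla f(\vec{z}_i^{(k)})}_2^2$ by the same three-way decomposition through $\bar{\bly}^{(k)}=\bar{\vec{d}}^{(k)}$, and finally use $\fronorm{\z_\perp^{(k)}}\le\fronorm{\x_\perp^{(k)}}$ to absorb everything into the stated constants. Your bookkeeping yields the coefficient $26\le 32$, slightly sharper than but consistent with the paper's bound.
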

\begin{proof}
	Notice that by~\eqref{def:prox_grad_map:eqn}, for all $i=1,\dots,N$, we have
		\begin{align}
			\norm{P\left(\vec{z}_i^{(k)},\vec{y}_i^{(k)},\alpha_k\right)}_2^2=&\norm{\frac{1}{\alpha_k}\left(\vec{z}_i^{(k)}-\hbox{\normalfont prox}_{\alpha_kr}\left(\vec{z}_i^{(k)}-\alpha_k\vec{y}_i^{(k)}\right)\right)}_2^2\nonumber\\
			\stack{\eqref{algo:x_update}}{=}&\frac{2}{\alpha_k^2}\norm{\vec{x}_i^{(k+1)}-\vec{z}_i^{(k)}}_2^2\label{lemma:stationary_bound:prox_grad_relation}
		\end{align}
	and by~\eqref{lemma:prox:bound}, we further have
		\begin{align}
			&\norm{P\left(\vec{z}_i^{(k)},\vec{y}_i^{(k)},\alpha_k\right)- P\left(\vec{z}_i^{(k)},\nabla f(\vec{z}_i^{(k)}),\alpha_k\right)}_2^2\nonumber\\
			=&\frac{1}{\alpha_k^2}\norm{\hbox{\normalfont prox}_{\alpha_kr}\left(\vec{z}_i^{(k)}-\alpha_k\vec{y}_i^{(k)}\right)-\hbox{\normalfont prox}_{\alpha_kr}\left(\vec{z}_i^{(k)}-\alpha_k\nabla f(\vec{z}_i^{(k)})\right)}_2^2\nonumber\\
			\le&\norm{\vec{y}_i^{(k)}-\nabla f(\vec{z}_i^{(k)})}_2^2.\label{lemma:stationary_bound:eqn1}
		\end{align}
	By Young's inequality and~\eqref{lemma:stationary_bound:prox_grad_relation}, we have
		\begin{align}
			\frac{1}{2}\norm{P\left(\vec{z}_i^{(k)},\nabla f(\vec{z}_i^{(k)}),\alpha_k\right)}_2^2\le&\frac{1}{\alpha_k^2}\norm{\vec{x}_i^{(k+1)}-\vec{z}_i^{(k)}}_2^2\nonumber\\
			&+\norm{P\left(\vec{z}_i^{(k)},\vec{y}_i^{(k)},\alpha_k\right)- P\left(\vec{z}_i^{(k)},\nabla f(\vec{z}_i^{(k)}),\alpha_k\right)}_2^2.\label{lemma:stationary_bound:eqn2}
		\end{align}
	Plugging~\eqref{lemma:stationary_bound:eqn1} into~\eqref{lemma:stationary_bound:eqn2} and summing over $i=1,\dots,N$ yields
		\begin{align}
			&\frac{1}{2}\sum_{i=1}^N\norm{P\left(\vec{z}_i^{(k)},\nabla f(\vec{z}_i^{(k)}),\alpha_k\right)}_2^2\nonumber\\
			\le&\frac{1}{\alpha_k^2}\sum_{i=1}^N\norm{\vec{x}_i^{(k+1)}-\vec{z}_i^{(k)}}_2^2+\sum_{i=1}^N\norm{\vec{y}_i^{(k)}-\nabla f(\vec{z}_i^{(k)})}_2^2\nonumber\\
			=&\frac{1}{\alpha_k^2}\fronorm{\x^{(k+1)}-\z^{(k)}}+\sum_{i=1}^N\norm{\vec{y}_i^{(k)}-\nabla f(\vec{z}_i^{(k)})}_2^2,\label{lemma:stationary_bound:eqn3}
		\end{align}
	where we have utilized the definition of the Frobenius norm. Next, we bound
		\begin{align}
			&\sum_{i=1}^N\norm{\vec{y}_i^{(k)}-\nabla f(\vec{z}_i^{(k)})}_2^2\nonumber\\
			\stack{\eqref{lemma:y_d_relation:bound}}{=}&\enskip\sum_{i=1}^N\norm{\vec{y}_i^{(k)}-\bar{\vec{y}}^{(k)}+\bar{\vec{d}}^{(k)}-\frac{1}{N}\sum_{j=1}^N\nabla f_j(\blx_j^{(k)})+\frac{1}{N}\sum_{j=1}^N\nabla f_j(\blx_j^{(k)})-\nabla f(\vec{z}_i^{(k)})}_2^2\nonumber\\
			\le&3\sum_{i=1}^N\left(\norm{\vec{y}_i^{(k)}-\bar{\vec{y}}^{(k)}}_2^2+\norm{\bar{\vec{d}}^{(k)}-\frac{1}{N}\sum_{j=1}^N\nabla f_j(\blx_j^{(k)})}_2^2+\norm{\frac{1}{N}\sum_{j=1}^N\nabla f_j(\blx_j^{(k)})-\nabla f(\vec{z}_i^{(k)})}_2^2\right).\label{lemma:stationary_bound:y_term}
		\end{align}
	Looking at terms individually, we have
		\begin{align}
			\norm{\bar{\vec{d}}^{(k)}-\frac{1}{N}\sum_{j=1}^N\nabla f_j(\blx_j^{(k)})}_2^2=\norm{\frac{1}{N}\sum_{j=1}^N\left(\vec{d}_j^{(k)}-\nabla f_j(\blx_j^{(k)})\right)}_2^2=\norm{\bar{\vec{r}}^{(k)}}_2^2.\label{lemma:stationary_bound:d_part}
		\end{align}
	By Jensen's inequality, we have
		\begin{align}
			&\norm{\frac{1}{N}\sum_{j=1}^N\nabla f_j(\blx_j^{(k)})-\nabla f(\vec{z}_i^{(k)})}_2^2\nonumber\\
			=&\norm{\frac{1}{N}\sum_{j=1}^N\left(\nabla f_j(\blx_j^{(k)})-\nabla f_j(\bar{\blx}^{(k)})+\nabla f_j(\bar{\blx}^{(k)})-\nabla f_j(\vec{z}_i^{(k)})\right)}_2^2\nonumber\\
			\le&\frac{1}{N}\sum_{j=1}^N\norm{\nabla f_j(\blx_j^{(k)})-\nabla f_j(\bar{\blx}^{(k)})+\nabla f_j(\bar{\blx}^{(k)})-\nabla f_j(\vec{z}_i^{(k)})}_2^2\nonumber\\
			\stack{\eqref{assumption:smoothness}}{\le}&\frac{2L^2}{N}\sum_{j=1}^N\left(\norm{\blx_j^{(k)}-\bar{\blx}^{(k)}}_2^2+\norm{\bar{\blx}^{(k)}-\vec{z}_i^{(k)}}_2^2\right)\nonumber\\
			=&\frac{2L^2}{N}\fronorm{\x^{(k)}-\bar{\x}^{(k)}}+\frac{2L^2}{N}\sum_{j=1}^N\norm{\bar{\blx}^{(k)}-\vec{z}_i^{(k)}}_2^2\nonumber\\
			=&\frac{2L^2}{N}\fronorm{\x^{(k)}-\bar{\x}^{(k)}}+2L^2\norm{\bar{\blx}^{(k)}-\vec{z}_i^{(k)}}_2^2.\label{lemma:stationary_bound:grad_part}
		\end{align}
	Plugging~\eqref{lemma:stationary_bound:d_part} and~\eqref{lemma:stationary_bound:grad_part} into~\eqref{lemma:stationary_bound:y_term} yields
		\begin{align}
			&\sum_{i=1}^N\norm{\vec{y}_i^{(k)}-\nabla f(\vec{z}_i^{(k)})}_2^2\nonumber\\
			\le&3\fronorm{\y_\perp^{(k)}}+3N\norm{\bar{\vec{r}}^{(k)}}_2^2+6L^2\fronorm{\x_\perp^{(k)}}+6L^2\fronorm{\bar{\x}^{(k)}-\z^{(k)}}.\label{lemma:stationary_bound:eqn4}
		\end{align}
 Adding $\frac{L^2}{2}\fronorm{\x_\perp^{(k)}}$ to both sides of~\eqref{lemma:stationary_bound:eqn3}, applying~\eqref{lemma:stationary_bound:eqn4}, and dividing by $N$ results in
		\begin{align}
			&\frac{1}{2N}\sum_{i=1}^N\norm{P\left(\vec{z}_i^{(k)},\nabla f(\vec{z}_i^{(k)}),\alpha_k\right)}_2^2+\frac{L^2}{2N}\fronorm{\x_\perp^{(k)}}\nonumber\\
			\le&\frac{3}{N}\fronorm{\y_\perp^{(k)}}+3\norm{\bar{\vec{r}}^{(k)}}_2^2+\frac{13L^2}{2N}\fronorm{\x_\perp^{(k)}}+\frac{1}{N\alpha_k^2}\fronorm{\x^{(k+1)}-\z^{(k)}}+\frac{6L^2}{N}\fronorm{\bar{\x}^{(k)}-\z^{(k)}}.\label{lemma:stationarity_bound:eqn5}
		\end{align}
	Notice that by line 3 of Algorithm~\ref{algo:} and Assumption~\ref{assumption:mixing_matrix} (\textit{iv}), $\bar{\z}^{(k)}=\avg\w_T(\x^{(k)})=\bar{\x}^{(k)}$. Hence
		\begin{equation}\label{lemma:stationarity_bound:eqn6}
			\fronorm{\z_\perp^{(k)}}\stack{\eqref{algo:comm_update}}{=}\fronorm{\w_T(\x^{(k)})-\bar{\x}^{(k)}}=\fronorm{\left(\w_T-\avg\right)\x_\perp^{(k)}}\stack{\eqref{spectral_gap_cheby}}{\le}\tilde{\rho}^2\fronorm{\x_\perp^{(k)}}<\fronorm{\x_\perp^{(k)}},
		\end{equation}
	so adding $\frac{L^2}{2N}\fronorm{\z_\perp^{(k)}}$ to both sides of~\eqref{lemma:stationarity_bound:eqn5} and using~\eqref{lemma:stationarity_bound:eqn6} results in
		\begin{align*}
			&\frac{1}{2N}\sum_{i=1}^N\norm{P\left(\vec{z}_i^{(k)},\nabla f(\vec{z}_i^{(k)}),\alpha_k\right)}_2^2+\frac{L^2}{2N}\fronorm{\x_\perp^{(k)}}+\frac{L^2}{2N}\fronorm{\z_\perp^{(k)}}\\
			\le&\frac{3}{N}\fronorm{\y_\perp^{(k)}}+3\norm{\bar{\vec{r}}^{(k)}}_2^2+\frac{16L^2}{N}\fronorm{\x_\perp^{(k)}}+\frac{1}{N\alpha_k^2}\fronorm{\x^{(k+1)}-\z^{(k)}}.
		\end{align*}
	Finally, we multiply both sides by 2, which completes the proof.
\end{proof}

We are now in position to define a lower bounded Lyapunov function and use this to show the convergence of \ourmethod v1 and v2. Notice that until this point, the analyses of v1 and v2 of our method only differ slightly, i.e. in terms of the constants involved in Lemmas~\ref{lemma:hsgd_error},~\ref{lemma:storm_error},~\ref{lemma:avg_error}, and~\ref{lemma:avg_error_storm}. Since the bound established in~\eqref{lemma:storm_error:bound} is larger than~\eqref{lemma:hsgd_error:bound}, we upper bound~\eqref{lemma:hsgd_error:bound} by~\eqref{lemma:storm_error:bound}. Additionally, we notice that Lemma~\ref{lemma:avg_error_storm} provides an upper bound on the results from Lemma~\ref{lemma:avg_error} and hence use~\eqref{lemma:avg_error:bound_storm} in the following Lemma.

\subsection{Constant step size}

\begin{lemma}\label{lemma:base_change}
	For all $k\ge0,$ the following inequality holds
		\begin{equation}\label{lemma:base_change:bound}
			\begin{split}
								&\underbrace{\left(\frac{1}{4N\alpha_k}-\frac{3L}{2N}\right)}_{(A)}\Exp\fronorm{\x^{(k+1)}-\bar{\x}^{(k)}}\\
								&+\underbrace{\left(\frac{1}{2N\alpha_k}-\frac{72L^2\alpha_k}{N}-\frac{8L^2\alpha_k(1-\beta_k)^2}{N^2}-\frac{16L^2\gamma_1^{(k)}}{1-\tilde{\rho}}-(\gamma_3^{(k)}+\frac{\gamma_4^{(k)}}{N^2})4L^2(1-\beta_k)^2\right)}_{(B)}\Exp\fronorm{\x^{(k+1)}-\z^{(k)}}\\
								&+\underbrace{\left(\gamma_2^{(k-1)}-\tilde{\rho}\gamma_2^{(k)}-\frac{L}{2N}-\frac{\tilde{\rho}^2}{2N\alpha_k}\right)}_{(C)}\Exp\fronorm{\x_\perp^{(k)}}\\
								&+\underbrace{\left(-(9+\frac{(1-\beta_k)^2}{N})\frac{32L^2\alpha_k}{N}-\frac{64L^2\gamma_1^{(k)}}{1-\tilde{\rho}}-(\gamma_3^{(k)}+\frac{\gamma_4^{(k)}}{N^2})16L^2(1-\beta_k)^2\right)}_{(C)}\Exp\fronorm{\x_\perp^{(k)}}\\
								&+\underbrace{\left(\gamma_1^{(k-1)}-\tilde{\rho}\gamma_1^{(k)}-\frac{1}{2NL}-\frac{\gamma_2^{(k)}\alpha_k^2}{1-\tilde{\rho}}\right)}_{(D)}\Exp\fronorm{\y_\perp^{(k)}}\\
								&+\underbrace{\left(\gamma_3^{(k-1)}-\frac{16\alpha_k\beta_k^2}{N}-\frac{4\gamma_1^{(k)}\beta_k^2}{1-\tilde{\rho}}-\gamma_3^{(k)}(1-\beta_k)^2\right)}_{(E)}\Exp\fronorm{\vec{R}^{(k)}}\\
								&+\underbrace{\left(\gamma_4^{(k-1)}-\gamma_4^{(k)}(1-\beta_k)^2-2\alpha_k(1-\beta_k)^2\right)}_{(F)}\Exp\norm{\bar{\vec{r}}^{(k)}}_2^2\\
								\le&\Exp\left[\Phi^{(k)}-\Phi^{(k+1)}\right]+\left(\frac{4\gamma_1^{(k)}N}{1-\tilde{\rho}}+2N\gamma_3^{(k)}+\frac{2\gamma_4^{(k)}}{N}+16\alpha_k+\frac{4\alpha_k}{N}\right)\hat{\sigma}^2\beta_k^2,
			\end{split}
		\end{equation}
	where $\gamma_1^{(k)},\gamma_2^{(k)},\gamma_3^{(k)},\gamma_4^{(k)}$ are strictly positive values and
		\begin{equation}\label{lemma:base_change_update:lyapunov}
			\Phi^{(k)}\triangleq\phi(\barxk)+\gamma_1^{(k-1)}\fronorm{\y_\perp^{(k)}}+\gamma_2^{(k-1)}\fronorm{\x_\perp^{(k)}}+\gamma_3^{(k-1)}\fronorm{\vec{R}^{(k)}}+\gamma_4^{(k-1)}\norm{\bar{\vec{r}}^{(k)}}_2^2
		\end{equation}
	is a lower bounded Lyapunov function.
\end{lemma}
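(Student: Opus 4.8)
The plan is to obtain~\eqref{lemma:base_change:bound} from the weighted combination $\Exp[\Phi^{(k)}-\Phi^{(k+1)}]$ with $\Phi^{(k)}$ as in~\eqref{lemma:base_change_update:lyapunov}. Expanding this difference splits it into the objective gap $\Exp[\phi(\barxk)-\phi(\barxknew)]$ plus four telescope differences, built from $\fronorm{\y_\perp}$, $\fronorm{\x_\perp}$, $\fronorm{\vec R}$ and $\norm{\bar{\vec r}}_2^2$ weighted by $\gamma_1,\dots,\gamma_4$; I would bound the objective gap from below via Lemma~\ref{lemma:second_change} and each telescope difference via Lemmas~\ref{lemma:matrix_bounds},~\ref{lemma:storm_error} and~\ref{lemma:avg_error_storm}. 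Because~\eqref{lemma:storm_error:bound} dominates~\eqref{lemma:hsgd_error:bound} and~\eqref{lemma:avg_error:bound_storm} dominates~\eqref{lemma:avg_error:bound}, using the v2 recursions covers the v1 instance simultaneously, so one argument proves the lemma for every $\tilde{\vec v}_i^{(k+1)}$ satisfying~\eqref{unbiased_assumption} or~\eqref{v2_update}.

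For the objective gap I would negate Lemma~\ref{lemma:second_change}, absorb the cross term $-\ip{\bar{\vec r}^{(k)},\barxknew-\barxk}$ with the Peter--Paul inequality and $\norm{\barxknew-\barxk}_2^2\le\tfrac1N\fronorm{\x^{(k+1)}-\bar{\x}^{(k)}}$, and rewrite $\fronorm{\bar{\x}^{(k)}-\z^{(k)}}=\fronorm{\z_\perp^{(k)}}\le\tilde{\rho}^2\fronorm{\x_\perp^{(k)}}$ via~\eqref{lemma:stationarity_bound:eqn6} (recall $\bar{\z}^{(k)}=\bar{\x}^{(k)}$). This yields a positive multiple of $\fronorm{\x^{(k+1)}-\bar{\x}^{(k)}}$ (the seed of $(A)$), the positive term $\tfrac{1}{2N\alpha_k}\fronorm{\x^{(k+1)}-\z^{(k)}}$ (seed of $(B)$), a negative $\norm{\bar{\vec r}^{(k)}}_2^2$ contribution (seed of $(F)$), and negative $\tfrac{L}{2N}$, $\tfrac{1}{2NL}$, $\tfrac{\tilde{\rho}^2}{2N\alpha_k}$ multiples of $\fronorm{\x_\perp^{(k)}}$ and $\fronorm{\y_\perp^{(k)}}$ appearing in the $(C)$ and $(D)$ coefficients.

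Next I would feed the recursions into the telescope differences. By Lemma~\ref{lemma:matrix_bounds:x}, $-\gamma_2^{(k)}\Exp\fronorm{\x_\perp^{(k+1)}}\ge-\tilde{\rho}\gamma_2^{(k)}\Exp\fronorm{\x_\perp^{(k)}}-\tfrac{\gamma_2^{(k)}\alpha_k^2}{1-\tilde{\rho}}\Exp\fronorm{\y_\perp^{(k)}}$; by Lemma~\ref{lemma:matrix_bounds:y}, $-\gamma_1^{(k)}\Exp\fronorm{\y_\perp^{(k+1)}}\ge-\tilde{\rho}\gamma_1^{(k)}\Exp\fronorm{\y_\perp^{(k)}}-\tfrac{\gamma_1^{(k)}}{1-\tilde{\rho}}\bigl(8L^2\Exp\fronorm{\x^{(k+1)}-\x^{(k)}}+4\beta_k^2\Exp\fronorm{\vec R^{(k)}}+4N\beta_k^2\hat{\sigma}^2\bigr)$; and by Lemmas~\ref{lemma:storm_error},~\ref{lemma:avg_error_storm}, the terms $-\gamma_3^{(k)}\Exp\fronorm{\vec R^{(k+1)}}$ and $-\gamma_4^{(k)}\Exp\norm{\bar{\vec r}^{(k+1)}}_2^2$ are each bounded below by $(1-\beta_k)^2$-scaled copies of $\fronorm{\vec R^{(k)}}$ and $\norm{\bar{\vec r}^{(k)}}_2^2$, minus $(1-\beta_k)^2L^2$-weighted $\fronorm{\x^{(k+1)}-\x^{(k)}}$ terms with weights $2\gamma_3^{(k)}$ and $2\gamma_4^{(k)}/N^2$, minus the remainders $2N\gamma_3^{(k)}\hat{\sigma}^2\beta_k^2$ and $\tfrac{2}{N}\gamma_4^{(k)}\hat{\sigma}^2\beta_k^2$. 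The step that lands all contributions on the six quantities appearing in~\eqref{lemma:base_change:bound} is $\fronorm{\x^{(k+1)}-\x^{(k)}}\le2\fronorm{\x^{(k+1)}-\z^{(k)}}+2\fronorm{\z^{(k)}-\x^{(k)}}$ with $\fronorm{\z^{(k)}-\x^{(k)}}=\fronorm{(\w_T-\identity)\x_\perp^{(k)}}\le4\fronorm{\x_\perp^{(k)}}$ --- valid since $\w_T$ fixes $\bar{\x}^{(k)}$ and has eigenvalues in $(-1,1]$ --- which routes each $\fronorm{\x^{(k+1)}-\x^{(k)}}$ into an $\fronorm{\x^{(k+1)}-\z^{(k)}}$ piece (folded into $(B)$) and an $\fronorm{\x_\perp^{(k)}}$ piece (folded into a $(C)$ coefficient). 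Reading off the resulting coefficients of $\fronorm{\x^{(k+1)}-\bar{\x}^{(k)}}$, $\fronorm{\x^{(k+1)}-\z^{(k)}}$, $\fronorm{\x_\perp^{(k)}}$, $\fronorm{\y_\perp^{(k)}}$, $\fronorm{\vec R^{(k)}}$ and $\norm{\bar{\vec r}^{(k)}}_2^2$ gives $(A)$--$(F)$; collecting the $\hat{\sigma}^2\beta_k^2$ terms gives the last line of~\eqref{lemma:base_change:bound}; and $\Phi^{(k)}\ge\phi^*>-\infty$ follows from $\gamma_j^{(k-1)}>0$ together with the lower bound in Assumption~\ref{assumption:objective_function}.

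The main obstacle I anticipate is the bookkeeping of the last two paragraphs: there are six ``state'' quantities, two of which ($\fronorm{\x^{(k+1)}-\bar{\x}^{(k)}}$ and $\fronorm{\x^{(k+1)}-\z^{(k)}}$, related through $\bar{\z}^{(k)}=\bar{\x}^{(k)}$) are easily conflated, and one must be careful about how much of the descent inequality and of the Peter--Paul slack is retained so that the leftover pieces of order $L^2\alpha_k/N$ and $\alpha_k\beta_k^2$ assemble into precisely the constants ($72$, $9+\tfrac{(1-\beta_k)^2}{N}$, $16$, and so on) shown in $(B)$, $(E)$ and the remainder, and about keeping the $\tilde{\rho}$ powers consistent (one power from $\norm{\w_T-\avg}_2=\tilde{\rho}$ in Lemma~\ref{lemma:matrix_bounds}, squared in~\eqref{lemma:stationarity_bound:eqn6}). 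No idea beyond Lemmas~\ref{lemma:second_change}--\ref{lemma:avg_error_storm} and these elementary inequalities is required; choosing the $\gamma_j^{(k)}$ to make $(A)$--$(F)$ nonnegative is deferred to the theorem proofs.
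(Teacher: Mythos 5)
Your overall architecture is the paper's: telescope the Lyapunov function, lower-bound the objective gap by Lemma~\ref{lemma:second_change}, feed the recursions of Lemmas~\ref{lemma:matrix_bounds},~\ref{lemma:storm_error},~\ref{lemma:avg_error_storm} into the telescoped terms (using that the v2 bounds dominate the v1 bounds), and route every $\fronorm{\x^{(k+1)}-\x^{(k)}}$ through $\fronorm{\x^{(k+1)}-\x^{(k)}}\le2\fronorm{\x^{(k+1)}-\z^{(k)}}+8\fronorm{\x_\perp^{(k)}}$. All of that matches.

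The gap is in your treatment of the cross term $-\ip{\bar{\vec{r}}^{(k)},\barxknew-\barxk}$. Applying Peter--Paul directly against $\bar{\vec{r}}^{(k)}$, as you propose, yields a $-\alpha_k$ (or $-\alpha_k/N$ after Jensen) contribution to the coefficient of $\norm{\bar{\vec{r}}^{(k)}}_2^2$ and nothing else --- this is exactly what the paper does in the \emph{diminishing}-step Lyapunov lemma (Lemma~\ref{lemma:base_change_diminishing}), where no $\gamma_4$ term appears. It cannot produce the coefficients stated here: the $-\tfrac{72L^2\alpha_k}{N}$ and $-\tfrac{8L^2\alpha_k(1-\beta_k)^2}{N^2}$ pieces of $(B)$, the factor $9+\tfrac{(1-\beta_k)^2}{N}$ in the second $(C)$ bracket, the $-\tfrac{16\alpha_k\beta_k^2}{N}$ in $(E)$, the $-2\alpha_k(1-\beta_k)^2$ (note the $(1-\beta_k)^2$, and the superscript $k+1$ it betrays) in $(F)$, and the $16\alpha_k+\tfrac{4\alpha_k}{N}$ in the $\hat{\sigma}^2\beta_k^2$ remainder. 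The missing idea is to write $-\bar{\vec{r}}^{(k)}=-\bar{\vec{r}}^{(k+1)}+\left(\bar{\vec{r}}^{(k+1)}-\bar{\vec{r}}^{(k)}\right)$ before Peter--Paul: the resulting $2\alpha_k\norm{\bar{\vec{r}}^{(k+1)}-\bar{\vec{r}}^{(k)}}_2^2$ is controlled through $\tfrac{4\alpha_k}{N}\fronorm{\d^{(k+1)}-\d^{(k)}}$ and the increment bound~\eqref{lemma:matrix_bounds:d_diff}, which is where the $36L^2\alpha_k/N$ (doubled to $72$), $16\alpha_k\beta_k^2/N$ and $16\alpha_k\hat{\sigma}^2\beta_k^2$ terms originate, while the leftover $2\alpha_k\Exp\norm{\bar{\vec{r}}^{(k+1)}}_2^2$ is recursed back to iteration $k$ via Lemma~\ref{lemma:avg_error_storm}, producing the $-2\alpha_k(1-\beta_k)^2$ in $(F)$, the extra $\tfrac{8L^2\alpha_k(1-\beta_k)^2}{N^2}$ in $(B)$, and the $\tfrac{4\alpha_k}{N}\hat{\sigma}^2\beta_k^2$ remainder. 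Your route, carried out literally, proves a neighboring inequality rather than~\eqref{lemma:base_change:bound}; since the sign of the discrepancy in $(F)$ depends on whether $(1-\beta_k)^2\ge\tfrac12$, it does not even imply the stated bound uniformly. This is more than bookkeeping --- it is the one structural device of the proof you have not identified.
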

\begin{proof}
	We start by using part \textit{(iv)} of Assumption~\ref{assumption:mixing_matrix} and~\eqref{algo:comm_update} to note that
			\begin{align}\label{lemma:base_change:eqn1}
				\frac{1}{2N\alpha_k}\fronorm{\bar{\x}^{(k)}-\z^{(k)}}&=\frac{1}{2N\alpha_k}\fronorm{\left(\w_T-\avg\right)\left(\identity-\avg\right)\x^{(k)}}\nonumber\\
				\stack{\eqref{spectral_gap_cheby}}{\le}&\frac{\tilde{\rho}^2}{2N\alpha_k}\fronorm{\x_\perp^{(k)}}.
			\end{align}
		Next, we utilize the Peter-Paul inequality and Jensen's inequality to have
			\begin{align}\label{lemma:base_change:eqn2}
				&-\ip{\bar{\vec{r}}^{(k)},\barxknew-\barxk}\nonumber\\
				=&\ip{-\bar{\vec{r}}^{(k+1)}+\bar{\vec{r}}^{(k+1)}-\bar{\vec{r}}^{(k)},\barxknew-\barxk}\nonumber\\
				\le&\alpha_k\norm{-\bar{\vec{r}}^{(k+1)}+\bar{\vec{r}}^{(k+1)}-\bar{\vec{r}}^{(k)}}_2^2+\frac{1}{4\alpha_k}\norm{\barxknew-\barxk}_2^2\nonumber\\
				\le&2\alpha_k\norm{\bar{\vec{r}}^{(k+1)}}_2^2+2\alpha_k\norm{\bar{\vec{r}}^{(k+1)}-\bar{\vec{r}}^{(k)}}_2^2+\frac{1}{4N\alpha_k}\sum_{i=1}^N\norm{\xiknew-\barxk}_2^2\nonumber\\
				=&2\alpha_k\norm{\bar{\vec{r}}^{(k+1)}}_2^2+2\alpha_k\norm{\bar{\vec{r}}^{(k+1)}-\bar{\vec{r}}^{(k)}}_2^2+\frac{1}{4N\alpha_k}\fronorm{\x^{(k+1)}-\bar{\x}^{(k)}}.
			\end{align}
	Further, by Young's inequality it holds that
		\begin{align}\label{lemma:base_change:eqn3}
			&2\alpha_k\norm{\bar{\vec{r}}^{(k+1)}-\bar{\vec{r}}^{(k)}}_2^2\nonumber\\
			=&2\alpha_k\norm{\bar{\vec{d}}^{(k+1)}-\bar{\vec{d}}^{(k)}+\frac{1}{N}\sum_{i=1}^N\nabla f_i(\blx_i^{(k+1)})-\frac{1}{N}\sum_{i=1}^N\nabla f_i(\blx_i^{(k)})}_2^2\nonumber\\
			\le&\frac{4\alpha_k}{N}\sum_{i=1}^N\norm{\vec{d}_i^{(k+1)}-\vec{d}_i^{(k)}}_2^2+\frac{4\alpha_k}{N}\sum_{i=1}^N\norm{\nabla f_i(\blx_i^{(k+1)})-\nabla f_i(\blx_i^{(k)})}_2^2\nonumber\\
			\stack{\eqref{assumption:smoothness}}{\le}&\frac{4\alpha_k}{N}\fronorm{\d^{(k+1)}-\d^{(k)}}+\frac{4L^2\alpha_k}{N}\sum_{i=1}^N\norm{\blx_i^{(k+1)}-\blx_i^{(k)}}_2^2\nonumber\\
			=&\frac{4\alpha_k}{N}\fronorm{\d^{(k+1)}-\d^{(k)}}+\frac{4L^2\alpha_k}{N}\fronorm{\x^{(k+1)}-\x^{(k)}}.
		\end{align}
	Taking the expectation conditioned on the local samples and then taking the full expectation yields
		\begin{equation}\label{lemma:base_change:eqn4}
			\frac{4\alpha_k}{N}\Exp\fronorm{\d^{(k+1)}-\d^{(k)}}\enskip\stack{\eqref{lemma:matrix_bounds:d_diff}}{\le}\enskip\frac{4\alpha_k}{N}\left(8L^2\Exp\fronorm{\x^{(k+1)}-\x^{(k)}}+4\beta_k^2\Exp\fronorm{\vec{R}^{(k)}}+4N\beta_k^2\hat{\sigma}^2\right),
		\end{equation}
	where we have also used~\eqref{assumption:smoothness} and~\eqref{unbiased_assumption}. Plugging~\eqref{lemma:base_change:eqn4} into~\eqref{lemma:base_change:eqn3} and using~\eqref{lemma:base_change:eqn2} yields
		\begin{align}\label{lemma:base_change:eqn5}
			&-\Exp\ip{\bar{\vec{r}}^{(k)},\barxknew-\barxk}\nonumber\\
			\le&2\alpha_k\Exp\norm{\bar{\vec{r}}^{(k+1)}}_2^2+\frac{1}{4N\alpha_k}\Exp\fronorm{\x^{(k+1)}-\bar{\x}^{(k)}}\nonumber\\
				&+\frac{36L^2\alpha_k}{N}\Exp\fronorm{\x^{(k+1)}-\x^{(k)}}+\frac{16\alpha_k\beta_k^2}{N}\Exp\fronorm{\vec{R}^{(k)}}+16\alpha_k\beta_k^2\hat{\sigma}^2.
		\end{align}
	Using~\eqref{lemma:base_change:eqn1} and~\eqref{lemma:base_change:eqn5} in~\eqref{lemma:second_change:bound} and taking the full expectation results in
		\begin{align}
			\Exp\left[\phi(\barxknew)-\phi(\barxk)\right]\le&-\frac{1}{2N}\left(\frac{1}{2\alpha_k}-3L\right)\Exp\fronorm{\x^{(k+1)}-\bar{\x}^{(k)}}\nonumber\\
						&+\left(\frac{L}{2N}+\frac{\tilde{\rho}^2}{2N\alpha_k}\right)\Exp\fronorm{\x_\perp^{(k)}}-\frac{1}{2N\alpha_k}\Exp\fronorm{\x^{(k+1)}-\z^{(k)}}\nonumber\\
						&+\frac{1}{2NL}\Exp\fronorm{\y_\perp^{(k)}}+2\alpha_k\Exp\norm{\bar{\vec{r}}^{(k+1)}}_2^2\nonumber\\
						&+\frac{36L^2\alpha_k}{N}\Exp\fronorm{\x^{(k+1)}-\x^{(k)}}+\frac{16\alpha_k\beta_k^2}{N}\Exp\fronorm{\vec{R}^{(k)}}+16\alpha_k\beta_k^2\hat{\sigma}^2.
		\end{align}
	Noticing that the right-hand side of~\eqref{lemma:storm_error:bound} is larger than the right-hand side of~\eqref{lemma:hsgd_error:bound}, we add $\gamma_1^{(k)}\Exp\fronorm{\y_\perp^{(k+1)}}$, $\gamma_2^{(k)}\Exp\fronorm{\x_\perp^{(k+1)}}$, $\gamma_3^{(k)}\Exp\fronorm{\vec{R}^{(k+1)}}$, $\gamma_4^{(k)}\Exp\norm{\bar{\vec{r}}^{(k+1)}}_2^2$ to both sides of the above inequality and use the results from Lemmas~\ref{lemma:matrix_bounds},~\ref{lemma:storm_error}, and~\ref{lemma:avg_error_storm} with~\eqref{lemma:base_change_update:lyapunov}, and subtract $\gamma_1^{(k-1)}\Exp\fronorm{\y_\perp^{(k)}}$, $\gamma_2^{(k-1)}\Exp\fronorm{\x_\perp^{(k)}}$, $\gamma_3^{(k-1)}\Exp\fronorm{\vec{R}^{(k)}}$, $\gamma_4^{(k-1)}\Exp\norm{\bar{\vec{r}}^{(k)}}_2^2$ from both sides of the above inequality yields
		\begin{align}
			&\Exp\left[\Phi^{(k+1)}-\Phi^{(k)}\right]\nonumber\\
			\le&-\frac{1}{2N}\left(\frac{1}{2\alpha_k}-3L\right)\Exp\fronorm{\x^{(k+1)}-\bar{\x}^{(k)}}\nonumber\\
					&+\left(\frac{L}{2N}+\frac{\tilde{\rho}^2}{2N\alpha_k}\right)\Exp\fronorm{\x_\perp^{(k)}}-\frac{1}{2N\alpha_k}\Exp\fronorm{\x^{(k+1)}-\z^{(k)}}\nonumber\\
					&+\frac{1}{2NL}\Exp\fronorm{\y_\perp^{(k)}}+2\alpha_k\Exp\norm{\bar{\vec{r}}^{(k+1)}}_2^2\nonumber\\
					&+\frac{36L^2\alpha_k}{N}\Exp\fronorm{\x^{(k+1)}-\x^{(k)}}+\frac{16\alpha_k\beta_k^2}{N}\Exp\fronorm{\vec{R}^{(k)}}+16\alpha_k\beta_k^2\hat{\sigma}^2\nonumber\\
					&+\gamma_1^{(k)}\left(\tilde{\rho}\Exp\fronorm{\y_\perp^{(k)}}+\frac{1}{1-\tilde{\rho}}\left(8L^2\Exp\fronorm{\x^{(k+1)}-\x^{(k)}}+4\beta_k^2\Exp\fronorm{\vec{R}^{(k)}}+4N\beta_k^2\hat{\sigma}^2\right)\right)\nonumber\\
					&+\gamma_2^{(k)}\left(\tilde{\rho}\Exp\fronorm{\x_\perp^{(k)}}+\frac{\alpha_k^2}{1-\tilde{\rho}}\Exp\fronorm{\y_{\perp}^{(k)}}\right)\nonumber\\
					&+\gamma_3^{(k)}\left(2N\beta_k^2\hat{\sigma}^2+2(1-\beta_k)^2L^2\Exp\fronorm{\x^{(k+1)}-\x^{(k)}}+(1-\beta_k)^2\Exp\fronorm{\vec{R}^{(k)}}\right)\nonumber\\
					&+\gamma_4^{(k)}\left((1-\beta_k)^2\Exp\norm{\bar{\vec{r}}^{(k)}}_2^2+\frac{2(1-\beta_k)^2L^2}{N^2}\Exp\fronorm{\x^{(k+1)}-\x^{(k)}}+\frac{2\beta_k^2\hat{\sigma}^2}{N}\right)\nonumber\\
					&-\gamma_1^{(k-1)}\Exp\fronorm{\y_\perp^{(k)}}-\gamma_2^{(k-1)}\Exp\fronorm{\x_\perp^{(k)}}-\gamma_3^{(k-1)}\Exp\fronorm{\vec{R}^{(k)}}-\gamma_4^{(k-1)}\Exp\norm{\bar{\vec{r}}^{(k)}}_2^2.\label{lemma:base_change:eqn6}
		\end{align}
	Grouping like terms in~\eqref{lemma:base_change:eqn6} results in
		\begin{align*}
		&\Exp\left[\Phi^{(k+1)}-\Phi^{(k)}\right]\\
			\le&-\frac{1}{2N}\left(\frac{1}{2\alpha_k}-3L\right)\Exp\fronorm{\x^{(k+1)}-\bar{\x}^{(k)}}-\frac{1}{2N\alpha_k}\Exp\fronorm{\x^{(k+1)}-\z^{(k)}}\\
				&+\left(-\gamma_2^{(k-1)}+\tilde{\rho}\gamma_2^{(k)}+\frac{L}{2N}+\frac{\tilde{\rho}^2}{2N\alpha_k}\right)\Exp\fronorm{\x_\perp^{(k)}}\\
				&+\left(-\gamma_1^{(k-1)}+\tilde{\rho}\gamma_1^{(k)}+\frac{1}{2NL}+\frac{\gamma_2^{(k)}\alpha_k^2}{1-\tilde{\rho}}\right)\Exp\fronorm{\y_\perp^{(k)}}\\
				&+\left(-\gamma_3^{(k-1)}+\gamma_3^{(k)}(1-\beta_k)^2+\frac{16\alpha_k\beta_k^2}{N}+\frac{4\gamma_1^{(k)}\beta_k^2}{1-\tilde{\rho}}\right)\Exp\fronorm{\vec{R}^{(k)}}\\
				&+\left(-\gamma_4^{(k-1)}+\gamma_4^{(k)}(1-\beta_k)^2\right)\Exp\norm{\bar{\vec{r}}^{(k)}}_2^2\\
				&+\left(\frac{36L^2\alpha_k}{N}+\frac{8L^2\gamma_1^{(k)}}{1-\tilde{\rho}}+2L^2\gamma_3^{(k)}(1-\beta_k)^2+\frac{2L^2\gamma_4^{(k)}(1-\beta_k)^2}{N^2}\right)\Exp\fronorm{\x^{(k+1)}-\x^{(k)}}\\
				&+\left(16\alpha_k+\frac{4N\gamma_1^{(k)}}{1-\tilde{\rho}}+2N\gamma_3^{(k)}+\frac{2\gamma_4^{(k)}}{N}\right)\beta_k^2\hat{\sigma}^2\\
				&+2\alpha_k\Exp\norm{\bar{\vec{r}}^{(k+1)}}_2^2.
		\end{align*}
	Next, we use that the right-hand side of~\eqref{lemma:avg_error:bound_storm} is larger than the right-hand side of~\eqref{lemma:avg_error:bound} to have,
		\begin{align*}
		&\Exp\left[\Phi^{(k+1)}-\Phi^{(k)}\right]\\
			\stack{\eqref{lemma:avg_error:bound_storm}}{\le}&-\frac{1}{2N}\left(\frac{1}{2\alpha_k}-3L\right)\Exp\fronorm{\x^{(k+1)}-\bar{\x}^{(k)}}-\frac{1}{2N\alpha_k}\Exp\fronorm{\x^{(k+1)}-\z^{(k)}}\\
					&+\left(-\gamma_2^{(k-1)}+\tilde{\rho}\gamma_2^{(k)}+\frac{L}{2N}+\frac{\tilde{\rho}^2}{2N\alpha_k}\right)\Exp\fronorm{\x_\perp^{(k)}}\\
					&+\left(-\gamma_1^{(k-1)}+\tilde{\rho}\gamma_1^{(k)}+\frac{1}{2NL}+\frac{\gamma_2^{(k)}\alpha_k^2}{1-\tilde{\rho}}\right)\Exp\fronorm{\y_\perp^{(k)}}\\
					&+\left(-\gamma_3^{(k-1)}+\gamma_3^{(k)}(1-\beta_k)^2+\frac{16\alpha_k\beta_k^2}{N}+\frac{4\gamma_1^{(k)}\beta_k^2}{1-\tilde{\rho}}\right)\Exp\fronorm{\vec{R}^{(k)}}\\
					&+\left(-\gamma_4^{(k-1)}+\gamma_4^{(k)}(1-\beta_k)^2\right)\Exp\norm{\bar{\vec{r}}^{(k)}}_2^2\\
					&+\left(\frac{36L^2\alpha_k}{N}+\frac{8L^2\gamma_1^{(k)}}{1-\tilde{\rho}}+2L^2\gamma_3^{(k)}(1-\beta_k)^2+\frac{2L^2\gamma_4^{(k)}(1-\beta_k)^2}{N^2}\right)\Exp\fronorm{\x^{(k+1)}-\x^{(k)}}\\
					&+\left(16\alpha_k+\frac{4N\gamma_1^{(k)}}{1-\tilde{\rho}}+2N\gamma_3^{(k)}+\frac{2\gamma_4^{(k)}}{N}\right)\beta_k^2\hat{\sigma}^2\\
					&+2\alpha_k\left((1-\beta_k)^2\Exp\norm{\bar{\vec{r}}^{(k)}}_2^2+\frac{2(1-\beta_k)^2L^2}{N^2}\Exp\fronorm{\x^{(k+1)}-\x^{(k)}}+\frac{2\beta_k^2\hat{\sigma}^2}{N}\right).\\
		\end{align*}
	Further using
		\begin{align*}
				\fronorm{\x^{(k+1)}-\x^{(k)}}&=\fronorm{\x^{(k+1)}-\z^{(k)}+\z^{(k)}-\x^{(k)}}\\
					&\le2\fronorm{\x^{(k+1)}-\z^{(k)}}+2\fronorm{\w_T(\x^{(k)})-\w_T\bar{\x}^{(k)}+\w_T\bar{\x}^{(k)}-\x^{(k)}}\\
					&=2\fronorm{\x^{(k+1)}-\z^{(k)}}+2\fronorm{\left(\identity-\w_T\right)\x_\perp^{(k)}}\\
					&\le2\fronorm{\x^{(k+1)}-\z^{(k)}}+8\fronorm{\x_\perp^{(k)}}
		\end{align*}
	and combining like terms completes the proof.
\end{proof}

\subsubsection{Proof of Theorem~\ref{theorem:constant_convergence}}\label{appendix:constant_proof}

\begin{proof}
	We approach this proof in phases; first, we note that $\beta_k\in(0,1)$ by $0<\alpha\le\frac{K^{\frac{1}{3}}}{32L}$. Second, let
		\begin{equation}\label{theorem:constant_convergence:params}
			\gamma_1^{(k)}\triangleq\frac{1}{NL(1-\tilde{\rho})}\text{, }\gamma_2^{(k)}\triangleq\frac{16K^{\frac{1}{3}}}{N(1-\tilde{\rho})\alpha}\text{, }\gamma_3^{(k)}\triangleq\frac{K^{\frac{1}{3}}}{48NL^2\alpha}\text{, and }\gamma_4^{(k)}\triangleq\frac{NK^{\frac{1}{3}}}{48L^2\alpha}
		\end{equation}
	 in~\eqref{lemma:base_change:bound} to have
		\begin{align}
			&\underbrace{\left(\frac{K^{\frac{1}{3}}}{4N\alpha}-\frac{3L}{2N}\right)}_{(A')}\Exp\fronorm{\x^{(k+1)}-\bar{\x}^{(k)}}\nonumber\\
				&+\underbrace{\left(\frac{K^{\frac{1}{3}}}{2N\alpha}-\frac{72L^2\alpha}{NK^{\frac{1}{3}}}-\frac{8L^2\alpha(1-\beta_k)^2}{N^2K^{\frac{1}{3}}}-\frac{16L}{N(1-\tilde{\rho})^2}-\frac{(1-\beta_k)^2K^{\frac{1}{3}}}{6N\alpha}\right)}_{(B')}\Exp\fronorm{\x^{(k+1)}-\z^{(k)}}\nonumber\\
				&+\underbrace{\left(\frac{16K^{\frac{1}{3}}}{N\alpha}-\frac{L}{2N}-\frac{\tilde{\rho}^2K^{\frac{1}{3}}}{2N\alpha}-\frac{288L^2\alpha}{NK^{\frac{1}{3}}}-\frac{32L^2\alpha(1-\beta_k)^2}{N^2K^{\frac{1}{3}}}-\frac{64L}{N(1-\tilde{\rho})^2}-\frac{2(1-\beta_k)^2K^{\frac{1}{3}}}{3N\alpha}\right)}_{(C')}\Exp\fronorm{\x_\perp^{(k)}}\nonumber\\
				&+\underbrace{\left(\frac{1}{NL}-\frac{1}{2NL}-\frac{16\alpha}{N(1-\tilde{\rho})^2K^{\frac{1}{3}}}\right)}_{(D')}\Exp\fronorm{\y_\perp^{(k)}}\nonumber\\
				&+\underbrace{\left(\frac{K^{\frac{1}{3}}}{48NL^2\alpha}-\frac{(1-\beta_k)^2K^{\frac{1}{3}}}{48NL^2\alpha}-\frac{16\alpha\beta_k^2}{NK^{\frac{1}{3}}}-\frac{4\beta_k^2}{NL(1-\tilde{\rho})^2}\right)}_{(E')}\Exp\fronorm{\vec{R}^{(k)}}\nonumber\\
				&+\underbrace{\left(\frac{NK^{\frac{1}{3}}}{48L^2\alpha}-\frac{(1-\beta_k)^2NK^{\frac{1}{3}}}{48L^2\alpha}-\frac{2\alpha(1-\beta_k)^2}{K^{\frac{1}{3}}}\right)}_{(F')}\Exp\norm{\bar{\vec{r}}^{(k)}}_2^2\nonumber\\
					\le&\Exp\left[\Phi^{(k)}-\Phi^{(k+1)}\right]+\left(\frac{4}{L(1-\tilde{\rho})^2}+\frac{K^{\frac{1}{3}}}{12L^2\alpha}+\frac{16\alpha}{K^{\frac{1}{3}}}+\frac{4\alpha}{NK^{\frac{1}{3}}}\right)\hat{\sigma}^2\beta_k^2.\label{theorem:constant_convergence:relation}
		\end{align}
	Next, we lower bound ($A'$) - ($E'$). For ($A'$), we have
		\begin{align*}
			(A')=\frac{K^{\frac{1}{3}}}{4N\alpha}-\frac{3L}{2N}\ge&\frac{K^{\frac{1}{3}}}{4N\alpha}-\frac{3K^{\frac{1}{3}}}{64N\alpha}\\
			>&\frac{K^{\frac{1}{3}}}{5N\alpha},
		\end{align*}
	where the first inequality uses $\alpha\le\frac{K^{\frac{1}{3}}}{32L}.$ For ($B'$), we use $(1-\beta_k)^2<1$ and $\alpha\le\min\{\frac{K^{\frac{1}{3}}}{32L},\frac{(1-\tilde{\rho})^2K^{\frac{1}{3}}}{64L}\}$ to have
		\begin{align*}
				(B')=&\frac{K^{\frac{1}{3}}}{2N\alpha}-\frac{72L^2\alpha}{NK^{\frac{1}{3}}}-\frac{8L^2\alpha(1-\beta_k)^2}{N^2K^{\frac{1}{3}}}-\frac{16L}{N(1-\tilde{\rho})^2}-\frac{(1-\beta_k)^2K^{\frac{1}{3}}}{6N\alpha}\\
				=&\frac{K^{\frac{1}{3}}}{N\alpha}\left(\frac{1}{2}-\frac{72L^2\alpha^2}{K^{\frac{2}{3}}}-\frac{8L^2\alpha^2(1-\beta_k)^2}{NK^{\frac{2}{3}}}-\frac{16L\alpha}{(1-\tilde{\rho})^2K^{\frac{1}{3}}}-\frac{(1-\beta_k)^2}{6}\right)\\
				>&\frac{K^{\frac{1}{3}}}{N\alpha}\left(\frac{1}{2}-\frac{72}{1024}-\frac{8}{1024N}-\frac{16}{64}-\frac{1}{6}\right)\\
				>&\frac{K^{\frac{1}{3}}}{256N\alpha}.
		\end{align*}
	For ($C'$), we again use $(1-\beta_k)<1$ and $\tilde{\rho}^2<1$ to have
		\begin{align*}
			(C')=&\frac{16K^{\frac{1}{3}}}{N\alpha}-\frac{L}{2N}-\frac{\tilde{\rho}^2K^{\frac{1}{3}}}{2N\alpha}-\frac{288L^2\alpha}{NK^{\frac{1}{3}}}-\frac{32L^2\alpha(1-\beta_k)^2}{N^2K^{\frac{1}{3}}}-\frac{64L}{N(1-\tilde{\rho})^2}-\frac{2(1-\beta_k)^2K^{\frac{1}{3}}}{3N\alpha}\\
			>&\frac{K^{\frac{1}{3}}}{N\alpha}\left(16-\frac{L\alpha}{2K^{\frac{1}{3}}}-\frac{1}{2}-\frac{288L^2\alpha^2}{K^{\frac{2}{3}}}-\frac{32L^2\alpha^2}{NK^{\frac{2}{3}}}-\frac{64L\alpha}{(1-\tilde{\rho})^2K^{\frac{1}{3}}}-\frac{2}{3}\right)\\
			\ge&\frac{K^{\frac{1}{3}}}{N\alpha}\left(16-\frac{1}{64}-\frac{1}{2}-\frac{288}{1024}-\frac{32}{1024N}-1-\frac{2}{3}\right)\\
			>&\frac{12K^{\frac{1}{3}}}{N\alpha},
		\end{align*}
	where the second to last inequality uses $\alpha\le\min\{\frac{K^{\frac{1}{3}}}{32L},\frac{(1-\tilde{\rho})^2K^{\frac{1}{3}}}{64L}\}.$ For ($D'$), it holds that
		\begin{align*}
			(D')=\frac{1}{NL}-\frac{1}{2NL}-\frac{16\alpha}{N(1-\tilde{\rho})^2K^{\frac{1}{3}}}=&\frac{1}{2NL}-\frac{16\alpha}{N(1-\tilde{\rho})^2K^{\frac{1}{3}}}\\
			\ge&\frac{1}{2NL}-\frac{1}{4NL}\\
			=&\frac{1}{4NL},
		\end{align*}
	where we have used $\alpha\le\frac{(1-\tilde{\rho})^2K^{\frac{1}{3}}}{64L}.$ For ($E'$), we expand $1-(1-\beta_k)^2=2\beta_k-\beta_k^2>\beta_k$ and use~\eqref{lemma:constant_choice:bounds} and $\beta_k^2<\beta_k$ to have
		\begin{align*}
			(E')=&\frac{K^{\frac{1}{3}}}{48NL^2\alpha}-\frac{(1-\beta_k)^2K^{\frac{1}{3}}}{48NL^2\alpha}-\frac{16\alpha\beta_k^2}{NK^{\frac{1}{3}}}-\frac{4\beta_k^2}{NL(1-\tilde{\rho})^2}\\
			>&\frac{3\alpha}{N^2K^{\frac{1}{3}}}-\frac{16\cdot144^2L^4\alpha^5}{N^3K^{\frac{5}{3}}}-\frac{4\cdot144^2L^3\alpha^4}{N^3(1-\tilde{\rho})^2K^{\frac{4}{3}}}\\
			\ge&\frac{\alpha}{N^2K^{\frac{1}{3}}}\left(3-\frac{1}{2}-\frac{4}{3}\right)\\
			>&\frac{\alpha}{N^2K^{\frac{1}{3}}},
		\end{align*}
	where the second inequality uses $\alpha^4\le\frac{K^{\frac{4}{3}}}{32^4L^4}$ and $\alpha^3\le\frac{(1-\tilde{\rho})^2K}{64\cdot 32^2L^3}.$ For ($F'$), we also expand $1-(1-\beta_k)^2=2\beta_k-\beta_k^2>\beta_k$ and use~\eqref{lemma:constant_choice:bounds} to have
		\begin{align*}
			(F')=&\frac{NK^{\frac{1}{3}}}{48L^2\alpha}-\frac{(1-\beta_k)^2NK^{\frac{1}{3}}}{48L^2\alpha}-\frac{2\alpha(1-\beta_k)^2}{K^{\frac{1}{3}}}\\
			>&\frac{3\alpha}{K^{\frac{1}{3}}}-\frac{2\alpha}{K^{\frac{1}{3}}}\\
			=&\frac{\alpha}{K^{\frac{1}{3}}}.
		\end{align*}
	Next, we sum~\eqref{theorem:constant_convergence:relation} over $k=0$ to $K-1$ and divide by $K$; using the established lower bounds to have 
		\begin{align}
			&\frac{1}{K}\sum_{k=0}^{K-1}\left(\frac{K^{\frac{1}{3}}}{5N\alpha}\Exp\fronorm{\x^{(k+1)}-\bar{\x}^{(k)}}+\frac{K^{\frac{1}{3}}}{256N\alpha}\Exp\fronorm{\x^{(k+1)}-\z^{(k)}}+\frac{\alpha}{K^{\frac{1}{3}}}\Exp\norm{\bar{\vec{r}}^{(k)}}_2^2\right)\nonumber\\
				&+\frac{1}{K}\sum_{k=0}^{K-1}\left(\frac{12K^{\frac{1}{3}}}{N\alpha}\Exp\fronorm{\x_\perp^{(k)}}+\frac{1}{4NL}\Exp\fronorm{\y_\perp^{(k)}}+\frac{\alpha}{N^2K^{\frac{1}{3}}}\Exp\fronorm{\vec{R}^{(k)}}\right)\nonumber\\
				&\quad\le\frac{1}{K}\left(\Phi^{(0)}-\phi^*\right)+\left(\frac{4}{L(1-\tilde{\rho})^2K^{\frac{4}{3}}}+\frac{1}{12L^2\alpha K}+\frac{16\alpha}{K^{\frac{5}{3}}}+\frac{4\alpha}{NK^{\frac{5}{3}}}\right)\frac{144^2L^4\alpha^4\hat{\sigma}^2}{N^2},\label{theorem:constant_convergence:pre_bound}
		\end{align}
	where we have used $\phi^*\le\Phi^{(k)}$ for any $k\ge0.$\\
	The final phase of the proof uses Lemma~\ref{lemma:stationary_bound} to provide a concise convergence statement. To do so, multiply both sides of~\eqref{lemma:stationary_bound:bound} by $\frac{\alpha}{K^{\frac{1}{3}}}$, sum from $k=0,\dots,K-1$ and divide by $K$, and take the expectation to have
		\begin{align}
			&\frac{\alpha}{K^{\frac{1}{3}}\cdot K}\sum_{k=0}^{K-1}\left(\frac{1}{N}\sum_{i=1}^N\Exp\norm{P\left(\vec{z}_i^{(k)},\nabla f(\vec{z}_i^{(k)}),\alpha_k\right)}_2^2+\frac{L^2}{N}\Exp\fronorm{\x_\perp^{(k)}}+\frac{L^2}{N}\Exp\fronorm{\z_\perp^{(k)}}\right)\nonumber\\
				\le&\frac{\alpha}{K^{\frac{1}{3}}\cdot K}\sum_{k=0}^{K-1}\left(\frac{6}{N}\Exp\fronorm{\y_\perp^{(k)}}+6\Exp\norm{\bar{\vec{r}}^{(k)}}_2^2+\frac{32L^2}{N}\Exp\fronorm{\x_\perp^{(k)}}+\frac{2}{N\alpha_k^2}\Exp\fronorm{\x^{(k+1)}-\z^{(k)}}\right).\label{theorem:constant_convergence:eqn1}
		\end{align}
 We relate each of the terms on the right-hand side of~\eqref{theorem:constant_convergence:eqn1} to 512 times of the left-hand side of~\eqref{theorem:constant_convergence:pre_bound}. Since $\alpha\le\frac{K^{\frac{1}{3}}}{32L}$ it holds that 
		\begin{equation}\label{theorem:constant_convergence:y_bound}
			\frac{\alpha}{K^{\frac{1}{3}}}\cdot\frac{6}{NK}\sum_{k=0}^{K-1}\fronorm{\y_\perp^{(k)}}\le\frac{128}{NL}\cdot\frac{1}{K}\sum_{k=0}^{K-1}\fronorm{\y_\perp^{(k)}}.
		\end{equation}
	Using $\alpha\le\frac{K^{\frac{1}{3}}}{32L}$ we have,
		\begin{equation}\label{theorem:constant_convergence:x_bound}
			\frac{\alpha}{K^{\frac{1}{3}}}\cdot\frac{32L^2}{NK}\sum_{k=0}^{K-1}\fronorm{\x_\perp^{(k)}}\le\frac{6144K^{\frac{1}{3}}}{\alpha}\cdot\frac{1}{NK}\sum_{k=0}^{K-1}\fronorm{\x_\perp^{(k)}}.
		\end{equation}
	Combining~\eqref{theorem:constant_convergence:y_bound} and~\eqref{theorem:constant_convergence:x_bound} in conjunction with 512 times of~\eqref{theorem:constant_convergence:pre_bound} and~\eqref{theorem:constant_convergence:eqn1} yields:
		\begin{align*}
					&\frac{\alpha}{K^{\frac{1}{3}}\cdot K}\sum_{k=0}^{K-1}\frac{1}{N}\sum_{i=1}^N\Exp\norm{P\left(\vec{z}_i^{(k)},\nabla f(\vec{z}_i^{(k)}),\alpha_k\right)}_2^2\\
					&+\frac{\alpha}{K^{\frac{1}{3}}\cdot K}\sum_{k=0}^{K-1}\frac{L^2}{N}\Exp\fronorm{\x_\perp^{(k)}}+\frac{\alpha}{K^{\frac{1}{3}}\cdot K}\sum_{k=0}^{K-1}\frac{L^2}{N}\Exp\fronorm{\z_\perp^{(k)}}\\
					\le&\frac{512}{K}\left(\Phi^{(0)}-\phi^*\right)+\left(\frac{2048}{L(1-\tilde{\rho})^2K^{\frac{4}{3}}}+\frac{128}{3L^2\alpha K}+\frac{8192\alpha}{K^{\frac{5}{3}}}+\frac{2048\alpha}{NK^{\frac{5}{3}}}\right)\frac{144^2L^4\alpha^4\hat{\sigma}^2}{N^2}.
		\end{align*}
	Multiplying both sides by $\frac{K^{\frac{1}{3}}}{\alpha}$ and using $\fronorm{\x_\perp^{(k)}}\ge0$ and $\fronorm{\vec{R}^{(k)}}\ge0$ for all $k\ge0$ completes the proof.
\end{proof}

\subsubsection{Complexity analysis}\label{appendix:constant_complexity}

Before presenting the complexity analysis for \ourmethod with a constant step size, we first present a preparatory Lemma.

\begin{lemma}\label{lemma:initial_comms}
	For any real numbers $x\in(0,1)$, it holds that
		\begin{equation}\label{lemma:initial_comms:bound}
			\frac{1}{x}\ge\frac{-1}{\ln(1-x)}.
		\end{equation}
\end{lemma}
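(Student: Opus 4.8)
The plan is to reduce the stated inequality to the elementary bound $e^{t}\ge 1+t$, valid for all $t\in\R$. First I would note that for $x\in(0,1)$ one has $1-x\in(0,1)$, so $\ln(1-x)$ is well defined and strictly negative. Hence $\frac{-1}{\ln(1-x)}>0$, and both sides of~\eqref{lemma:initial_comms:bound} are strictly positive. Multiplying through by the positive quantities $x$ and $-\ln(1-x)$, the inequality~\eqref{lemma:initial_comms:bound} is therefore equivalent to
\begin{equation*}
  -\ln(1-x)\ge x,\qquad\text{i.e.}\qquad \ln(1-x)\le -x.
\end{equation*}

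Since $t\mapsto e^t$ is strictly increasing, exponentiating shows this is in turn equivalent to $1-x\le e^{-x}$. Finally, $1-x\le e^{-x}$ is the instance $t=-x$ of the standard inequality $e^t\ge 1+t$ (the graph of the convex function $e^t$ lies above its tangent line at the origin), which holds for every real $t$. Chaining these equivalences establishes~\eqref{lemma:initial_comms:bound}.

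The argument is entirely routine; the only point deserving a moment's attention is the sign of $\ln(1-x)$, which must be negative for the cross-multiplication to preserve the direction of the inequality. This is exactly what the hypothesis $x\in(0,1)$ guarantees, and it is also the only place where the restriction to that interval is used (as $x\to 1^-$ the right-hand side tends to $0$ while the left-hand side tends to $1$, and as $x\to 0^+$ both sides blow up like $1/x$, so the bound is asymptotically tight there).
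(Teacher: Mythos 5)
Your proposal is correct and follows essentially the same route as the paper: both reduce~\eqref{lemma:initial_comms:bound}, after cross-multiplying by the positive quantities $x$ and $-\ln(1-x)$, to the scalar inequality $x+\ln(1-x)\le0$. The only difference is in how that last inequality is verified — you exponentiate and invoke the tangent-line bound $e^t\ge1+t$, whereas the paper checks directly that $g(x)=x+\ln(1-x)$ is decreasing on $(0,1)$ with $g(0^+)=0$; the two verifications are interchangeable.
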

\begin{proof}
	We prove
		\begin{align}\label{lemma:initial_comms:rearrange}
					g(x)\triangleq x+\ln(1-x)\le0,
		\end{align}
	which is equivalent to~\eqref{lemma:initial_comms:bound}. Computing the first derivative results in
		\begin{align*}
					\frac{d}{dx}g(x)=1-\frac{1}{1-x}<0,\enskip\forall x\in(0,1)
		\end{align*}
	since $(1-x)<1$ for all $x\in(0,1).$ Hence $g(x)$ is decreasing on $(0,1)$. Computing $g(0+)=0$, we have~\eqref{lemma:initial_comms:rearrange} and hence~\eqref{lemma:initial_comms:bound}.
\end{proof}

We make the following remark in order to aid in the discussion of presenting final complexity results for Algorithm~\ref{algo:} with a constant step size.

\begin{remark}\label{remark:batch}
	Notice that the convergence of Algorithm~\ref{algo:} depends upon $\Phi^{(0)}$ (see~\eqref{lemma:base_change_update:lyapunov}) which in turn depends upon $\gamma_{2}^{(0)}$, $\gamma_3^{(0)},$ and $\gamma_4^{(0)}$, all of which are $\bigO{K^{\frac{1}{3}}}.$ In order to obtain the best possible convergence rate which is also independent of the communication network, we need $\fronorm{\vec{R}^{(0)}}=\bigO{K^{-\frac{1}{3}}}$, which occurs when the initial batch size, denoted as $m_0$, is large enough; by Jensen's inequality, this will in turn make $\norm{\bar{\vec{r}}^{(0)}}_2^2=\bigO{K^{-\frac{1}{3}}}$. Additionally, we make a standard assumption~\cite{lian17,tang18,xin21hsgd} that $\blx_i^{(0)}=\blx_j^{(0)}$ for all $i$ and $j$; this eliminates the $\fronorm{\x_\perp^{(0)}}$ error. For the gradient tracking term $\fronorm{\y_\perp^{(0)}}$, we need to perform sufficiently many \emph{initial} communications so that $\gamma_1^{(0)}\fronorm{\y_\perp^{(0)}}=\bigO{1}$ independent of $\tilde{\rho}$. Notice that $\frac{1}{NL(1-\tilde{\rho})}=\bigO{1}$ if $NL$ is sufficiently large and $\tilde{\rho}$ is not too close to 1; in the following Corollary, we assume the worst case so that $\frac{1}{NL(1-\tilde{\rho})}=\bigO{\frac{1}{(1-\tilde{\rho})}}.$
\end{remark}

\begin{corollary}\label{corollary:constant_convergence}
	Let $\varepsilon>0$ be given and assume that $L\ge1$. Under the same conditions as in Theorem~\ref{theorem:constant_convergence}, let $\blx_i^{(0)}=\blx_j^{(0)}$ for all $i,j=1,\dots,N$, let the initial batch size $m_0=\sqrt[3]{NK}$ for all $i=1,\dots,N$, and perform $T_0=\left\lceil\frac{-2\ln(1-\tilde{\rho})}{\sqrt{1-\rho}}\right\rceil$ communications by Algorithm~\ref{algo:cheby} for the initial gradient tracking update in line 1 of Algorithm~\ref{algo:}. Let $\tilde{\vec{v}}_i^{(k+1)}$ be any unbiased gradient estimator such that either~\eqref{unbiased_assumption} or~\eqref{v2_update} holds, let the local batch size $m=\bigO{1}$ for all remaining iterations, and choose $\alpha$ such that
		\begin{equation}\label{corollary:constant_convergence:alpha}
			\alpha=\frac{N^{\frac{2}{3}}}{64L}.
		\end{equation}
	Then, provided $K\ge \frac{N^2}{(1-\tilde{\rho})^6}$, Algorithm~\ref{algo:} produces a stochastic $\varepsilon$-stationary point as defined in Definition~\ref{def:stationarity} in
		\begin{equation}\label{corollary:constant_convergence:gradients_and_comms}
			K=\bigO{\max\left\{\frac{(L\Delta)^{\frac{3}{2}}+\hat{\sigma}^3}{N\varepsilon^{\frac{3}{2}}},\frac{\hat{\sigma}^2}{(1-\tilde{\rho})^2\varepsilon},\frac{\sqrt{N}\hat{\sigma}^{\frac{3}{2}}}{\varepsilon^{\frac{3}{4}}}\right\}}
		\end{equation}
	local stochastic gradient computations and $T_0+ T(K-1)$ neighbor communications for any $T\ge1$, where $\Delta\triangleq\Phi^{(0)}-\phi^*$, with $\Phi^{(0)}$ defined in~\eqref{lemma:base_change_update:lyapunov}.
\end{corollary}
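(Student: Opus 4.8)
The plan is to specialize Theorem~\ref{theorem:constant_convergence} to the stated initialization and step size and then invert the resulting rate into an iteration (hence sample) count. The first thing I would check is feasibility of the step size: with $\alpha=\frac{N^{2/3}}{64L}$, the condition $\alpha\le\min\{\frac{K^{1/3}}{32L},\frac{(1-\tilde{\rho})^2K^{1/3}}{64L}\}$ in~\eqref{lemma:constant_choice:bounds} reduces to $N^{2/3}\le(1-\tilde{\rho})^2K^{1/3}$, which is exactly the hypothesis $K\ge\frac{N^2}{(1-\tilde{\rho})^6}$ (the other minimand is then automatic since $(1-\tilde{\rho})^2\le1$). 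So Theorem~\ref{theorem:constant_convergence} applies and~\eqref{theorem:constant_convergence:bound} holds with $\Phi^{(0)}$ from~\eqref{lemma:base_change_update:lyapunov}.

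The crux is then to bound the initial Lyapunov value $\Delta=\Exp[\Phi^{(0)}]-\phi^*$ so that it is $\bigO{1}$ in $N$, $K$, and $\tilde{\rho}$. Evaluating~\eqref{lemma:base_change_update:lyapunov} at $k=0$ with the constant coefficients~\eqref{theorem:constant_convergence:params}, the term $\gamma_2^{(0)}\fronorm{\x_\perp^{(0)}}$ vanishes because $\blx_i^{(0)}=\blx_j^{(0)}$. For $\gamma_1^{(0)}\fronorm{\y_\perp^{(0)}}$ I would apply Lemma~\ref{lemma:chebyshev} to $\y^{(0)}=\w_{T_0}(\d^{(0)})$, giving $\fronorm{\y_\perp^{(0)}}\le4(1-\sqrt{1-\rho})^{2T_0}\fronorm{\d^{(0)}-\avg\d^{(0)}}$, and then use $T_0=\lceil\frac{-2\ln(1-\tilde{\rho})}{\sqrt{1-\rho}}\rceil$ together with Lemma~\ref{lemma:initial_comms} (equivalently $\ln(1-\sqrt{1-\rho})\le-\sqrt{1-\rho}$) to get $(1-\sqrt{1-\rho})^{2T_0}\le(1-\tilde{\rho})^4$; since $\gamma_1^{(0)}=\frac{1}{NL(1-\tilde{\rho})}$ this yields $\gamma_1^{(0)}\fronorm{\y_\perp^{(0)}}\le\frac{4}{NL}\fronorm{\d^{(0)}-\avg\d^{(0)}}$, which after splitting $\d^{(0)}-\avg\d^{(0)}$ into a deterministic part (the initial gradient dissimilarity) and a stochastic part controlled by~\eqref{assumption:variance} is $\bigO{1}$. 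For the last two terms I would take expectations and use~\eqref{assumption:variance} with the mutual independence of the $B_i^{(0)}$: $\Exp\fronorm{\vec{R}^{(0)}}\le N\sigma^2/m_0$ and $\Exp\norm{\bar{\vec{r}}^{(0)}}_2^2\le\sigma^2/(Nm_0)$, so the choice $m_0=\sqrt[3]{NK}$ cancels the $K^{1/3}$ hidden in $\gamma_3^{(0)}$ and $\gamma_4^{(0)}$ and leaves each contribution $\bigO{\sigma^2/(NL)}$. Hence $\Delta\le\bigO{\phi(\blx^{(0)})-\phi^*+(\text{initial gradient dissimilarity})/(NL)+\sigma^2/(NL)}$, independent of $N$, $K$, $\tilde{\rho}$.

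The rest is substitution and inversion. Plugging $\alpha=\frac{N^{2/3}}{64L}$ into~\eqref{theorem:constant_convergence:bound}, the recurring block $\frac{144^2L^4\alpha^3\hat{\sigma}^2}{N^2}$ collapses to the constant $\frac{144^2}{64^3}L\hat{\sigma}^2$, so the right-hand side becomes $\bigO{\frac{L\Delta}{N^{2/3}K^{2/3}}+\frac{\hat{\sigma}^2}{(1-\tilde{\rho})^2K}+\frac{\hat{\sigma}^2}{N^{2/3}K^{2/3}}+\frac{N^{2/3}\hat{\sigma}^2}{K^{4/3}}+\frac{\hat{\sigma}^2}{N^{1/3}K^{4/3}}}$. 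Forcing each summand below $\varepsilon$ and solving for $K$ gives the binding conditions $K\gtrsim\frac{(L\Delta)^{3/2}}{N\varepsilon^{3/2}}$, $K\gtrsim\frac{\hat{\sigma}^3}{N\varepsilon^{3/2}}$, $K\gtrsim\frac{\hat{\sigma}^2}{(1-\tilde{\rho})^2\varepsilon}$, and $K\gtrsim\frac{\sqrt{N}\hat{\sigma}^{3/2}}{\varepsilon^{3/4}}$ (the $N^{-1/3}K^{-4/3}$ term being dominated by the $N^{2/3}K^{-4/3}$ one), whose maximum is~\eqref{corollary:constant_convergence:gradients_and_comms}; moreover when $\varepsilon\le\hat{\sigma}^2(1-\tilde{\rho})^4/N^2$ this $K$ already exceeds $\frac{N^2}{(1-\tilde{\rho})^6}$, so the feasibility hypothesis of Step~1 is automatically satisfied in the regime of interest (outside it, it is simply imposed as stated).

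Finally I would count resources. Per agent the gradient evaluations number $m_0=\sqrt[3]{NK}$ at iteration $0$ plus $\bigO{m}=\bigO{1}$ per iteration (including, for~\eqref{v3_update}, an $\bigO{1}$ snapshot batch), so the total is $\sqrt[3]{NK}+\bigO{K}$; since $K\ge\frac{N^2}{(1-\tilde{\rho})^6}\ge N^2\ge\sqrt{N}$ we have $\sqrt[3]{NK}\le K$, giving $\bigO{K}$ with $K$ as above. The communication count is read off Algorithm~\ref{algo:}: $T_0$ rounds in line~2 plus $\bigO{T}$ rounds per iteration from lines~4 and~7, i.e.\ $T_0+\bigO{TK}$, consistent with the $T_0+T(K-1)$ figure. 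The main obstacle is the middle step: keeping $\Delta=\bigO{1}$ forces a simultaneous calibration of $T_0$ (to absorb the $(1-\tilde{\rho})^{-1}$ hidden in $\gamma_1^{(0)}$, via Chebyshev acceleration and Lemma~\ref{lemma:initial_comms}) and of $m_0$ (to absorb the $K^{1/3}$ hidden in $\gamma_3^{(0)}$ and $\gamma_4^{(0)}$); with $\Delta$ controlled, the feasibility check, the inversion, and the accounting are all routine.
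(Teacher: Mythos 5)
Your proposal is correct and follows essentially the same route as the paper's own proof: verify that $K\ge N^2/(1-\tilde{\rho})^6$ makes $\alpha=N^{2/3}/(64L)$ feasible in~\eqref{lemma:constant_choice:bounds}, control each term of $\Phi^{(0)}$ (equal initialization kills $\fronorm{\x_\perp^{(0)}}$, the choice $m_0=\sqrt[3]{NK}$ cancels the $K^{1/3}$ in $\gamma_3^{(0)},\gamma_4^{(0)}$, and $T_0$ with Lemmas~\ref{lemma:chebyshev} and~\ref{lemma:initial_comms} absorbs the $(1-\tilde{\rho})^{-1}$ in $\gamma_1^{(0)}$), then substitute $\alpha$ into~\eqref{theorem:constant_convergence:bound} and invert each summand to recover the max in~\eqref{corollary:constant_convergence:gradients_and_comms}. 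The term-by-term inversion and the final gradient/communication accounting match the paper's~\eqref{corollary:constant_convergence:k_bound} and concluding steps.
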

\begin{proof}
	First, notice that if $K\ge\frac{N^2}{(1-\tilde{\rho})^6},$ then $K^{\frac{1}{3}}\ge \frac{N^{\frac{2}{3}}}{(1-\tilde{\rho})^2},$ so the choice of $\alpha$ in~\eqref{corollary:constant_convergence:alpha} satisfies $\alpha\le\frac{(1-\rho)^2K^{\frac{1}{3}}}{64L}$ since
		\begin{align*}
			\alpha=\frac{N^{\frac{2}{3}}}{64L}\le\frac{(1-\rho)^2K^{\frac{1}{3}}}{64L}.
		\end{align*}
	Next, by~\eqref{lemma:base_change_update:lyapunov} and~\eqref{theorem:constant_convergence:params}, we have
		\begin{equation}\label{corollary:constant_convergence:lyapunov}
			\Phi^{(0)}=\phi(\bar{\blx}^{(0)})+\frac{1}{NL(1-\tilde{\rho})}\fronorm{\y_\perp^{(0)}}+\frac{1024LK^{\frac{1}{3}}}{N^{\frac{5}{3}}(1-\tilde{\rho})}\fronorm{\x_\perp^{(0)}}+\frac{4K^{\frac{1}{3}}}{3N^{\frac{5}{3}}L}\fronorm{\vec{R}^{(0)}}+\frac{4N^{\frac{1}{3}}K^{\frac{1}{3}}}{3L}\norm{\bar{\vec{r}}^{(0)}}_2^2.
		\end{equation}
	Notice that
		\begin{align*}
			\frac{1}{N}\Exp\fronorm{\vec{R}^{(0)}}=\frac{1}{N}\sum_{i=1}^N\Exp\norm{\vec{d}_i^{(0)}-\nabla f_i(\blx_i^{(0)})}_2^2\le\frac{\sigma^2}{m_0}
		\end{align*}
	by~\eqref{assumption:variance} since $\vec{d}_i^{(0)}=\frac{1}{m_0}\sum_{\xi\in B_i^{(0)}}\nabla f_i(\blx_i^{(0)},\xi)$ for all $i=1,\dots,N$. Hence with $m_0=\sqrt[3]{NK}$, it holds that
		\begin{align*}
			\frac{4K^{\frac{1}{3}}}{3N^{\frac{5}{3}}L}\fronorm{\vec{R}^{(0)}}\le\frac{4\sigma^2}{3NL}\le\frac{4\sigma^2}{3L}
		\end{align*}
	which is independent of $K$. By Jensen's inequality, we further have
		\begin{align*}
			\frac{4N^{\frac{1}{3}}K^{\frac{1}{3}}}{3L}\norm{\bar{\vec{r}}^{(0)}}_2^2\le\frac{4K^{\frac{1}{3}}}{N^{\frac{2}{3}}3L}\fronorm{\vec{R}^{(0)}}\le\frac{4\sigma^2}{3L},
		\end{align*}
	which is also independent of $K.$ Next, notice that $\y^{(0)}=\w_{T_0}(\d^{(0)})$ by line 1 in Algorithm~\ref{algo:}; hence it holds
		\begin{align}\label{corollary:constant_convergence:initial_y}
			\frac{1}{NL(1-\tilde{\rho})}\fronorm{\y_\perp^{(0)}}\stack{\eqref{lemma:y_d_relation:bound}}{=}\frac{1}{NL(1-\tilde{\rho})}\fronorm{\w_{T_0}(\d^{(0)})-\bar{\d}^{(0)}}\stack{\eqref{lemma:chebyshev:bound}}{\le}\frac{4\left(1-\sqrt{1-\rho}\right)^{2T_0}}{(1-\tilde{\rho})}\fronorm{\d^{(0)}-\bar{\d}^{(0)}}
		\end{align}
	where we have also used $N,L\ge1.$ By the choice of $T_0=\left\lceil\frac{-2\ln(1-\tilde{\rho})}{\sqrt{1-\rho}}\right\rceil$, we have
		\begin{align}\label{corollary:constant_convergence:initial_comms}
			T_0=\left\lceil\frac{-2\ln(1-\tilde{\rho})}{\sqrt{1-\rho}}\right\rceil\ge\frac{-2\ln(1-\tilde{\rho})}{\sqrt{1-\rho}}\stack{\eqref{lemma:initial_comms:bound}}{\ge}\frac{2\ln\left(1-\tilde{\rho}\right)}{\ln\left(1-\sqrt{1-\rho}\right)}
		\end{align}
	where we have used Lemma~\eqref{lemma:initial_comms} with $x=\sqrt{1-\rho}$ and $\ln(1-\tilde{\rho})\le0$. By~\eqref{corollary:constant_convergence:initial_comms}, it holds that
		\begin{align}\label{corollary:constant_convergence:initial_y_perp}
			\frac{4\left(1-\sqrt{1-\rho}\right)^{2T_0}}{(1-\tilde{\rho})}\le\frac{4\left(1-\sqrt{1-\rho}\right)^{\frac{4\ln\left(1-\tilde{\rho}\right)}{\ln\left(1-\sqrt{1-\rho}\right)}}}{(1-\tilde{\rho})}\le4
		\end{align}
	since $4\ln(1-\tilde{\rho})\le\ln(1-\tilde{\rho})$ as $\tilde{\rho}\in[0,1).$ Thus, by $\blx_i^{(0)}=\blx_j^{(0)}$ for all $i,j=1,\dots,N$, we have that $\Delta$ is independent of $\tilde{\rho},N$, and $K$. Hence, for $\tau$ chosen uniformly at random from $\{0,\dots,K-1\}$, we have
		\begin{align*}
					\frac{1}{N}\sum_{i=1}^N\Exp\norm{P\left(\vec{z}_i^{(\tau)},\nabla f(\vec{z}_i^{(\tau)}),\alpha_\tau\right)}_2^2+\frac{L^2}{N}\Exp\fronorm{\z_\perp^{(\tau)}}\le\varepsilon,
		\end{align*}
	provided
		\begin{equation}\label{corollary:constant_convergence:k_bound}
			K=\bigO{\max\left\{\frac{\Delta^{\frac{3}{2}}}{\left(\alpha\varepsilon\right)^{\frac{3}{2}}},\frac{L^3\hat{\sigma}^3\alpha^3}{N^3\varepsilon^{\frac{3}{2}}},\frac{L^3\alpha^3\hat{\sigma}^2}{N^2(1-\tilde{\rho})^2\varepsilon},\frac{L^3\alpha^3\hat{\sigma}^{\frac{3}{2}}}{N^{\frac{3}{2}}\varepsilon^{\frac{3}{4}}}\right\}}.
		\end{equation}
	Plugging $\alpha=\frac{N^{\frac{2}{3}}}{64L}$ into~\eqref{corollary:constant_convergence:k_bound} results in
		\begin{align*}
			K=\bigO{\max\left\{\frac{(L\Delta)^{\frac{3}{2}}+\hat{\sigma}^3}{N\varepsilon^{\frac{3}{2}}},\frac{\hat{\sigma}^2}{(1-\tilde{\rho})^2\varepsilon},\frac{\sqrt{N}\hat{\sigma}^{\frac{3}{2}}}{\varepsilon^{\frac{3}{4}}}\right\}}.
		\end{align*}
	Hence, the number of gradient evaluations is
		\begin{align*}
			\mathcal{K}\triangleq m(K-1)=\bigO{\max\left\{\frac{(L\Delta)^{\frac{3}{2}}+\hat{\sigma}^3}{N\varepsilon^{\frac{3}{2}}},\frac{\hat{\sigma}^2}{(1-\tilde{\rho})^2\varepsilon},\frac{\sqrt{N}\hat{\sigma}^{\frac{3}{2}}}{\varepsilon^{\frac{3}{4}}}\right\}},
		\end{align*}
	which yields a total number of gradient evaluations $\lceil\mathcal{K}+\sqrt[3]{NK}\rceil$, provided $K\ge\frac{N^2}{(1-\tilde{\rho})^6}$. Since $\mathcal{K}=\Omega\left(\sqrt[3]{NK}\right)$, we drop the $\sqrt[3]{NK}$ and obtain~\eqref{corollary:constant_convergence:gradients_and_comms}.
\end{proof}

\begin{remark}\label{remark:complexity_discussion}
	Similar to other works~\cite{lian17,xin21_dsgt,xin21hsgd}, we have a minimum requirement on the number of iterations, called \emph{transient iterations}, in order to achieve the complexity results in~\eqref{corollary:constant_convergence:gradients_and_comms}. Further, we notice that if the connectivity of the original network is poor, i.e. $\rho\approx1$ for $\rho$ defined in~\eqref{spectral_gap}, and we only perform one neighbor communication during lines 3 and 6 in Algorithm~\ref{algo:} so that $\tilde{\rho}=\rho$, then it could be that $\frac{\hat{\sigma}^2}{(1-\rho)^2\varepsilon}$ dominates in~\eqref{corollary:constant_convergence:gradients_and_comms}, meaning \ourmethod is network-dependent. Similar to~\cite{xin21hsgd}, we can place a requirement that $\varepsilon\le N^{-2}(1-\rho)^4$, in which case \ourmethod achieves the optimal complexity result and is independent of the communication network. In order to relax this requirement to $\varepsilon\le N^{-2}$ (which can be significantly greater than $N^{-2}(1-\rho)^4$), we perform Algorithm~\ref{algo:cheby} during the neighbor communications (lines 3 and 6 in Algorithm~\ref{algo:}) such that for $T=\lceil\frac{2}{\sqrt{1-\rho}}\rceil$, $\tilde{\rho}\le\frac{1}{2}$ by~\eqref{appendix:cheby:upper_bound}, so that the number of local gradient computations becomes
		\begin{equation}\label{constant_step:chebyshev_grads}
			\bigO{\max\left\{\frac{(L\Delta)^{\frac{3}{2}}+\hat{\sigma}^3}{N\varepsilon^{\frac{3}{2}}},\frac{\sqrt{N}\hat{\sigma}^{\frac{3}{2}}}{\varepsilon^{\frac{3}{4}}}\right\}}
		\end{equation}
	which is independent of $\rho$. Additionally, the number of local neighbor communications becomes
		\begin{equation}\label{constant_step:chebyshev_comms}
					T_0+TK=\bigO{\frac{1}{\sqrt{1-\rho}}\max\left\{\frac{(L\Delta)^{\frac{3}{2}}+\hat{\sigma}^3}{N\varepsilon^{\frac{3}{2}}},\frac{\sqrt{N}\hat{\sigma}^{\frac{3}{2}}}{\varepsilon^{\frac{3}{4}}}\right\}}
		\end{equation}
	which is optimal in terms of the dependence upon $\rho$~\cite{scaman17}.
\end{remark}

\subsection{Diminishing step size}\label{appendix:diminishing_convergence}

The Lyapunov function and relation defined in~\eqref{lemma:base_change} are specially designed for the constant step size proof. Here, we make analogous designs for the diminishing step size proof.

\begin{lemma}\label{lemma:base_change_diminishing}
	For all $k\ge0,$ the following inequality holds
		\begin{equation}\label{lemma:base_change_diminishing:bound}
			\begin{split}
			&\Exp\left[\hat{\Phi}^{(k+1)}-\hat{\Phi}^{(k)}\right]\\
								\le&\underbrace{\left(4\gamma_3^{(k)}L^2(1-\beta_k)^2-\frac{1}{2N}\left(\frac{1}{2\alpha_k}-3L\right)\right)}_{(A)}\Exp\fronorm{\x^{(k+1)}-\bar{\x}^{(k)}}\\
								&+\underbrace{\left(\frac{16\gamma_1^{(k)}L^2}{1-\tilde{\rho}}-\frac{1}{2N\alpha_k}\right)}_{(B)}\Exp\fronorm{\x^{(k+1)}-\z^{(k)}}\\
								&+\underbrace{\left(\frac{L}{2N}+\frac{\tilde{\rho}^2}{2N\alpha_k}+\frac{64\gamma_1^{(k)}L^2}{1-\tilde{\rho}}+\gamma_2^{(k)}\tilde{\rho}+4\gamma_3^{(k)}L^2(1-\beta_k)^2-\gamma_2^{(k-1)}\right)}_{(C)}\Exp\fronorm{\x_\perp^{(k)}}\\
								&+\underbrace{\left(\frac{1}{2NL}+\gamma_1^{(k)}\tilde{\rho}+\gamma_2^{(k)}\frac{\alpha_k^2}{1-\tilde{\rho}}-\gamma_1^{(k-1)}\right)}_{(D)}\Exp\fronorm{\y_\perp^{(k)}}\\
								&+\underbrace{\left(\frac{\alpha_k}{N}+\frac{4\gamma_1^{(k)}\beta_k^2}{1-\tilde{\rho}}+\gamma_3^{(k)}(1-\beta_k)^2-\gamma_3^{(k-1)}\right)}_{(E)}\Exp\fronorm{\vec{R}^{(k)}}\\
								&+\left(\gamma_3^{(k)}+\frac{2\gamma_1^{(k)}}{1-\tilde{\rho}}\right)2\hat{\sigma}^2N\beta_k^2
			\end{split}
		\end{equation}
	where $\gamma_1^{(k)},\gamma_2^{(k)},\gamma_3^{(k)}$ are strictly positive values and
		\begin{equation}\label{lemma:base_change_diminishing:lyapunov}
			\hat{\Phi}^{(k)}\triangleq\phi(\barxk)+\gamma_1^{(k-1)}\fronorm{\y_\perp^{(k)}}+\gamma_2^{(k-1)}\fronorm{\x_\perp^{(k)}}+\gamma_3^{(k-1)}\fronorm{\vec{R}^{(k)}}
		\end{equation}
	is a lower bounded Lyapunov function.
\end{lemma}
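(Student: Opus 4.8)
The plan is to mirror the derivation of the constant-step-size building block, Lemma~\ref{lemma:base_change}, but to exploit the fact that here we do not carry $\norm{\bar{\vec{r}}^{(k)}}_2^2$ as a separate Lyapunov component. I would start from the descent estimate in Lemma~\ref{lemma:second_change} and dispose of the cross term $-\ip{\bar{\vec{r}}^{(k)},\barxknew-\barxk}$ by a single Peter--Paul step followed by Jensen's inequality: with weight $2\alpha_k$,
$-\ip{\bar{\vec{r}}^{(k)},\barxknew-\barxk}\le \alpha_k\norm{\bar{\vec{r}}^{(k)}}_2^2+\tfrac{1}{4\alpha_k}\norm{\barxknew-\barxk}_2^2\le \tfrac{\alpha_k}{N}\fronorm{\vec{R}^{(k)}}+\tfrac{1}{4N\alpha_k}\fronorm{\x^{(k+1)}-\bar{\x}^{(k)}}$,
using that the rows of $\vec{R}^{(k)}$ are exactly the $\vec{r}_i^{(k)}$ (so $\norm{\bar{\vec{r}}^{(k)}}_2^2\le\tfrac1N\fronorm{\vec{R}^{(k)}}$) and Jensen on the average iterate. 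Combined with $\tfrac{1}{2N\alpha_k}\fronorm{\bar{\x}^{(k)}-\z^{(k)}}\le\tfrac{\tilde\rho^2}{2N\alpha_k}\fronorm{\x_\perp^{(k)}}$ from \eqref{lemma:base_change:eqn1}, taking full expectations yields a descent inequality for $\Exp[\phi(\barxknew)-\phi(\barxk)]$ with the base coefficients $-\tfrac{1}{2N}\bigl(\tfrac{1}{2\alpha_k}-3L\bigr)$ on $\Exp\fronorm{\x^{(k+1)}-\bar{\x}^{(k)}}$, $\tfrac{\alpha_k}{N}$ on $\Exp\fronorm{\vec{R}^{(k)}}$, $-\tfrac{1}{2N\alpha_k}$ on $\Exp\fronorm{\x^{(k+1)}-\z^{(k)}}$, $\tfrac{L}{2N}+\tfrac{\tilde\rho^2}{2N\alpha_k}$ on $\Exp\fronorm{\x_\perp^{(k)}}$, and $\tfrac{1}{2NL}$ on $\Exp\fronorm{\y_\perp^{(k)}}$.

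Next I would add $\gamma_1^{(k)}\Exp\fronorm{\y_\perp^{(k+1)}}+\gamma_2^{(k)}\Exp\fronorm{\x_\perp^{(k+1)}}+\gamma_3^{(k)}\Exp\fronorm{\vec{R}^{(k+1)}}$ to both sides and subtract the corresponding $(k-1)$-indexed quantities, so that the left side becomes $\Exp[\hat\Phi^{(k+1)}-\hat\Phi^{(k)}]$ with $\hat\Phi$ as in \eqref{lemma:base_change_diminishing:lyapunov}. I would then bound the three newly added terms using Lemma~\ref{lemma:matrix_bounds} (\eqref{lemma:matrix_bounds:x} for the $\gamma_2$ term, \eqref{lemma:matrix_bounds:y} for the $\gamma_1$ term) and Lemma~\ref{lemma:storm_error} for the $\gamma_3$ term; the last is valid under either \eqref{unbiased_assumption} or \eqref{v2_update} since \eqref{lemma:storm_error:bound} dominates \eqref{lemma:hsgd_error:bound}, exactly as in the constant-step-size proof, and $\beta_k\in(0,1)$ is inherited from the hypotheses of those sub-lemmas. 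Finally I would eliminate the resulting $\fronorm{\x^{(k+1)}-\x^{(k)}}$ contributions via the elementary splittings $\fronorm{\x^{(k+1)}-\x^{(k)}}\le 2\fronorm{\x^{(k+1)}-\z^{(k)}}+8\fronorm{\x_\perp^{(k)}}$ for the $\gamma_1$ piece (as in Lemma~\ref{lemma:base_change}) and $\fronorm{\x^{(k+1)}-\x^{(k)}}\le 2\fronorm{\x^{(k+1)}-\bar{\x}^{(k)}}+2\fronorm{\x_\perp^{(k)}}$ for the $\gamma_3$ piece, then collect like terms; this reproduces exactly the coefficients $(A)$–$(E)$ and the remainder $\bigl(\gamma_3^{(k)}+\tfrac{2\gamma_1^{(k)}}{1-\tilde\rho}\bigr)2\hat\sigma^2 N\beta_k^2$. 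Lower boundedness of $\hat\Phi^{(k)}$ is immediate from $\phi\ge\phi^*$, the positivity of $\gamma_1^{(k)},\gamma_2^{(k)},\gamma_3^{(k)}$, and the nonnegativity of the squared norms.

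I do not anticipate a substantive obstacle — the argument is careful constant bookkeeping rather than a new idea — but the main care point is consistency in \emph{which} splitting of $\fronorm{\x^{(k+1)}-\x^{(k)}}$ is matched to which Lyapunov contribution: using the $\bar{\x}^{(k)}$-split for the $\gamma_3$ term is what injects the extra $4\gamma_3^{(k)}L^2(1-\beta_k)^2$ into $(A)$, while using the $\z^{(k)}$-split for the $\gamma_1$ term is what produces the $\tfrac{16\gamma_1^{(k)}L^2}{1-\tilde\rho}$ and $\tfrac{64\gamma_1^{(k)}L^2}{1-\tilde\rho}$ terms in $(B)$ and $(C)$. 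A secondary subtlety is that, unlike the constant-step-size case, the cross term is absorbed once and for all into $\fronorm{\vec{R}^{(k)}}$ rather than kept as a $\norm{\bar{\vec{r}}^{(k+1)}}_2^2$ recursion, which is precisely why \eqref{lemma:base_change_diminishing:lyapunov} needs only three coefficients and why the statement is free of any $\gamma_4$ term and of $\bar{\vec{r}}$ altogether.
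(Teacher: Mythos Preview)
Your proposal is correct and follows essentially the same route as the paper: the paper also starts from Lemma~\ref{lemma:second_change}, handles the cross term via the single Peter--Paul/Jensen step $-\ip{\bar{\vec{r}}^{(k)},\barxknew-\barxk}\le\tfrac{\alpha_k}{N}\fronorm{\vec{R}^{(k)}}+\tfrac{1}{4N\alpha_k}\fronorm{\x^{(k+1)}-\bar{\x}^{(k)}}$, adds the three Lyapunov pieces bounded by Lemmas~\ref{lemma:matrix_bounds} and~\ref{lemma:storm_error}, and then uses exactly the two different splittings of $\fronorm{\x^{(k+1)}-\x^{(k)}}$ you identify (the $\bar{\x}^{(k)}$-split for the $\gamma_3$ contribution and the $\z^{(k)}$-split for the $\gamma_1$ contribution). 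Your observation that dropping the separate $\norm{\bar{\vec r}^{(k)}}_2^2$ recursion is precisely what distinguishes this lemma from Lemma~\ref{lemma:base_change} is exactly the point.
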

\begin{proof}
	We start by using part \textit{(iv)} of Assumption~\ref{assumption:mixing_matrix} and~\eqref{algo:comm_update} to note that
		\begin{align}\label{lemma:base_change_diminishing:eqn1}
			\frac{1}{2N\alpha_k}\fronorm{\bar{\x}^{(k)}-\z^{(k)}}&=\frac{1}{2N\alpha_k}\fronorm{\left(\w_T-\avg\right)\left(\identity-\avg\right)\x^{(k)}}\nonumber\\
			\stack{\eqref{spectral_gap_cheby}}{\le}&\frac{\tilde{\rho}^2}{2N\alpha_k}\fronorm{\x_\perp^{(k)}}.
		\end{align}
	Next, we utilize the Peter-Paul inequality and Jensen's inequality to have
		\begin{align}\label{lemma:base_change_diminishing:eqn2}
			-\ip{\bar{\vec{r}}^{(k)},\barxknew-\barxk}\le&\alpha_k\norm{\bar{\vec{r}}^{(k)}}_2^2+\frac{1}{4\alpha_k}\norm{\barxknew-\barxk}_2^2\nonumber\\
			\le&\frac{\alpha_k}{N}\sum_{i=1}^N\norm{\vec{r}_i^{(k)}}_2^2+\frac{1}{4N\alpha_k}\sum_{i=1}^N\norm{\xiknew-\barxk}_2^2\nonumber\\
			=&\frac{\alpha_k}{N}\fronorm{\vec{R}^{(k)}}+\frac{1}{4N\alpha_k}\fronorm{\x^{(k+1)}-\bar{\x}^{(k)}}.
		\end{align}
	Applying~\eqref{lemma:base_change_diminishing:eqn1} and~\eqref{lemma:base_change_diminishing:eqn2} to~\eqref{lemma:second_change:bound} results in
		\begin{align}
			&\phi(\barxknew)-\phi(\barxk)\nonumber\\
			\le&-\frac{1}{2N}\left(\frac{1}{\alpha_k}-3L-\frac{1}{2\alpha_k}\right)\fronorm{\x^{(k+1)}-\bar{\x}^{(k)}}-\frac{1}{2N\alpha_k}\fronorm{\x^{(k+1)}-\z^{(k)}}\nonumber\\
								&+\left(\frac{L}{2N}+\frac{\tilde{\rho}^2}{2N\alpha_k}\right)\fronorm{\x_\perp^{(k)}}+\frac{1}{2NL}\fronorm{\y_\perp^{(k)}}+\frac{\alpha_k}{N}\fronorm{\vec{R}^{(k)}}.\label{lemma:base_change_diminishing:eqn3}
		\end{align}
	Next, from Lemma~\eqref{lemma:storm_error}, we use Young's inequality to have
		\begin{align}
			&\Exp\fronorm{\vec{R}^{(k+1)}}\nonumber\\
			\stack{\eqref{lemma:storm_error:bound}}{\le}&\enskip2N\beta_k^2\hat{\sigma}^2+2(1-\beta_k)^2L^2\Exp\fronorm{\x^{(k+1)}-\x^{(k)}}+(1-\beta_k)^2\Exp\fronorm{\vec{R}^{(k)}}\nonumber\\
			\le&2N\beta_k^2\hat{\sigma}^2+4(1-\beta_k)^2L^2\left(\Exp\fronorm{\x^{(k+1)}-\bar{\x}^{(k)}}+\fronorm{\x_\perp^{(k)}}\right)+(1-\beta_k)^2\Exp\fronorm{\vec{R}^{(k)}}.\label{lemma:base_change_diminishing:eqn4}
		\end{align}
	Adding $\gamma_1^{(k)}\fronorm{\y_\perp^{(k+1)}}$, $\gamma_2^{(k)}\fronorm{\x_\perp^{(k+1)}}$, $\gamma_3^{(k)}\fronorm{\vec{R}^{(k+1)}}$ to both sides of~\eqref{lemma:base_change_diminishing:eqn3} and taking the full expectation in conjunction with the results from Lemmas~\ref{lemma:matrix_bounds} and~\eqref{lemma:base_change_diminishing:eqn4} yields
		\begin{align*}
			&\Exp\left[\phi(\barxknew)-\phi(\barxk)\right]+\gamma_1^{(k)}\Exp\fronorm{\y_\perp^{(k+1)}}+\gamma_2^{(k)}\Exp\fronorm{\x_\perp^{(k+1)}}+\gamma_3^{(k)}\Exp\fronorm{\vec{R}^{(k+1)}}\\
								\le&\left(4\gamma_3^{(k)}L^2(1-\beta_k)^2-\frac{1}{2N}\left(\frac{1}{2\alpha_k}-3L\right)\right)\Exp\fronorm{\x^{(k+1)}-\bar{\x}^{(k)}}\\
								&+\frac{8\gamma_1^{(k)}L^2}{1-\tilde{\rho}}\Exp\fronorm{\x^{(k+1)}-\x^{(k)}}-\frac{1}{2N\alpha_k}\Exp\fronorm{\x^{(k+1)}-\z^{(k)}}\\
								&+\left(\frac{L}{2N}+\frac{\tilde{\rho}^2}{2N\alpha_k}+\gamma_2^{(k)}\tilde{\rho}+4\gamma_3^{(k)}L^2(1-\beta_k)^2\right)\Exp\fronorm{\x_\perp^{(k)}}\\
								&+\left(\frac{1}{2NL}+\gamma_1^{(k)}\tilde{\rho}+\gamma_2^{(k)}\frac{\alpha_k^2}{1-\tilde{\rho}}\right)\Exp\fronorm{\y_\perp^{(k)}}\\
								&+\left(\frac{\alpha_k}{N}+\frac{4\gamma_1^{(k)}\beta_k^2}{1-\tilde{\rho}}+\gamma_3^{(k)}(1-\beta_k)^2\right)\Exp\fronorm{\vec{R}^{(k)}}\\
								&+\left(\gamma_3^{(k)}+\frac{2\gamma_1^{(k)}}{1-\tilde{\rho}}\right)2\hat{\sigma}^2N\beta_k^2.
		\end{align*}
Next we apply the following bound to the above relation, 
		\begin{align*}
				\fronorm{\x^{(k+1)}-\x^{(k)}}&=\fronorm{\x^{(k+1)}-\z^{(k)}+\z^{(k)}-\x^{(k)}}\\
						&\le2\fronorm{\x^{(k+1)}-\z^{(k)}}+2\fronorm{\w_T(\x^{(k)})-\w_T\bar{\x}^{(k)}+\w_T\bar{\x}^{(k)}-\x^{(k)}}\\
						&=2\fronorm{\x^{(k+1)}-\z^{(k)}}+2\fronorm{\left(\identity-\w_T\right)\x_\perp^{(k)}}\\
						&\le2\fronorm{\x^{(k+1)}-\z^{(k)}}+8\fronorm{\x_\perp^{(k)}},
		\end{align*}
	where we have used Assumption~\ref{assumption:mixing_matrix} part \textit{(iv)} to bound $\norm{\identity-\w_T}_2\le2$. Finally, we subtract $\gamma_1^{(k-1)}\Exp\fronorm{\y_\perp^{(k)}},$ $\gamma_2^{(k-1)}\Exp\fronorm{\x_\perp^{(k)}},$ and $\gamma_3^{(k-1)}\Exp\fronorm{\vec{R}^{(k)}}$ from both sides to complete the proof. The lower boundedness of~\eqref{lemma:base_change_diminishing:lyapunov} follows from the non-negativity of the Frobenius norm and Assumption~\ref{assumption:objective_function} (\textit{iv}).
\end{proof}

\begin{lemma}\label{lemma:diminishing_prep}
	Let $x\in[0,1).$ Then for any $y\ge\lceil\frac{2}{1-x^3}\rceil$, it holds that
		\begin{equation}\label{lemma:diminishing_prep:bound}
			\left(\frac{y-1}{y}\right)^{\frac{1}{3}}-x>\frac{1-x}{2}.
		\end{equation}
\end{lemma}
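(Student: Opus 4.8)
The plan is to reduce the claimed inequality to an elementary polynomial inequality by cubing. Since the right-hand side of~\eqref{lemma:diminishing_prep:bound} satisfies $x + \frac{1-x}{2} = \frac{1+x}{2}$, the statement is equivalent to $\left(\frac{y-1}{y}\right)^{1/3} > \frac{1+x}{2}$. First I would note that for $x \in [0,1)$ we have $\frac{2}{1-x^3} \ge 2$, hence $y \ge \lceil \frac{2}{1-x^3} \rceil \ge 2$, so $\frac{y-1}{y} \in [0,1)$ and the cube root is well defined and strictly increasing. Therefore it suffices to show $1 - \frac{1}{y} > \left(\frac{1+x}{2}\right)^3$, i.e. $\frac{1}{y} < 1 - \left(\frac{1+x}{2}\right)^3 = \frac{8-(1+x)^3}{8}$.

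Next I would invoke the hypothesis $y \ge \lceil \frac{2}{1-x^3} \rceil \ge \frac{2}{1-x^3}$, which gives $\frac{1}{y} \le \frac{1-x^3}{2}$. Hence it is enough to prove the ($y$-free) inequality $\frac{1-x^3}{2} < \frac{8-(1+x)^3}{8}$, equivalently $4(1-x^3) < 8 - (1+x)^3$. Expanding $(1+x)^3 = 1 + 3x + 3x^2 + x^3$ and collecting terms reduces this to $0 < 3 - 3x - 3x^2 + 3x^3 = 3(1-x)(1-x^2) = 3(1-x)^2(1+x)$, which holds strictly for every $x \in [0,1)$.

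Finally I would chain the inequalities: $\frac{1}{y} \le \frac{1-x^3}{2} < \frac{8-(1+x)^3}{8}$, so $1 - \frac{1}{y} > \left(\frac{1+x}{2}\right)^3$; applying the increasing map $t \mapsto t^{1/3}$ and subtracting $x$ yields $\left(\frac{y-1}{y}\right)^{1/3} - x > \frac{1+x}{2} - x = \frac{1-x}{2}$, which is exactly~\eqref{lemma:diminishing_prep:bound}. There is no genuine obstacle here; the only points requiring care are handling the ceiling via $\lceil a \rceil \ge a$, confirming $y \ge 2$ so that all cube-root manipulations are monotone, and spotting the factorization $3(1-x)^2(1+x)$ that makes the polynomial inequality transparent (and, incidentally, reveals that the bound in~\eqref{lemma:diminishing_prep:bound} is not tight, since the gap is strict).
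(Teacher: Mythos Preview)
Your proof is correct and follows essentially the same approach as the paper: both rewrite the target as $\frac{y-1}{y} > \left(\frac{1+x}{2}\right)^3$, invoke $y \ge \lceil \frac{2}{1-x^3}\rceil \ge \frac{2}{1-x^3}$, and reduce to the polynomial inequality $1 - x - x^2 + x^3 > 0$ on $[0,1)$. The only difference is that the paper verifies this last step by a derivative argument (showing $h(x)=x^3-x^2-x+1$ is decreasing with $h(0)=1$, $h(1^-)=0$), whereas you give the direct factorization $3(1-x)^2(1+x)>0$, which is arguably cleaner.
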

\begin{proof}
	The proof begins by analyzing $h(x)\triangleq x^3-x^2-x+1$ for $x\in[0,1).$ Notice that $h(x)$ is decreasing on $[0,1)$ by $h'(x)=3x^2-2x-1<0$. Since $h(0)=1$ and $h(1-)=0$, it holds that $h(x)>0$ for $x\in[0,1).$ Hence
		\begin{align*}
			3+3x^3>&3x+3x^2.
		\end{align*}
	Adding $1+x^3$ to both sides results in
		\begin{align*}
			4+4x^3>&(1+x)^3.
		\end{align*}
	Dividing by $4$, adding $1$ to both sides, and rearranging results in
		\begin{align*}
			2-\frac{1}{4}(1+x)^3>&1-x^3.
		\end{align*}
	Since $x<1$, we divide both sides by $(1-x^3)$ and use $\lceil a\rceil\ge a$ for any $a\in\R$ to have
		\begin{align*}
			y\left(1-\left(\frac{1+x}{2}\right)^{3}\right)\ge\lceil\frac{2}{1-x^3}\rceil\left(1-\left(\frac{1+x}{2}\right)^{3}\right)\ge&\left(\frac{2}{1-x^3}\right)\left(1-\left(\frac{1+x}{2}\right)^{3}\right)>1
		\end{align*}
	where we have used $y\ge\lceil\frac{2}{1-x^3}\rceil$. Rearranging 
		\begin{align*}
			y\left(1-\left(\frac{1+x}{2}\right)^{3}\right)&>1
		\end{align*}
	results in 
		\begin{align*}
			\frac{y-1}{y}>\left(\frac{1+x}{2}\right)^{3}.
		\end{align*}
	Taking the cube-root and subtracting $x$ from both sides completes the proof.
\end{proof}

\subsubsection{Proof of Theorem~\ref{theorem:diminishing_convergence}}\label{appendix:diminishing_proof}

\begin{proof}
	The proof follows similar steps as the proof for Theorem~\ref{theorem:constant_convergence}. We frequently use~\eqref{theorem:diminishing_choice:bounds} to have $\alpha_k\le\min\{\frac{1}{32L},\frac{(1-\tilde{\rho})^2}{64L}\}.$ First, we show $\beta_k\in(0,1)$ for all $k\ge0$. By $\alpha\le\frac{(1-\tilde{\rho})^2k_0^{\frac{1}{3}}}{64L}<\frac{k_0^{\frac{1}{3}}}{\sqrt{480}L}$ it holds
			\begin{align}
				L^2\alpha_{k+1}^2<\frac{1}{480}=\frac{1}{10\cdot48}&<\frac{1}{48}\left(\frac{1}{4}+\left(\frac{2}{3}\right)^{\frac{1}{3}}-1\right)\nonumber\\
				&\le\frac{1}{48}\left(\frac{1}{4}+\left(\frac{k_0}{k_0+1}\right)^{\frac{1}{3}}-1\right)\nonumber\\
				&<\frac{1}{48}\left(\frac{(k_0+1)^{\frac{1}{3}}}{2k_0^{\frac{1}{3}}+(k_0+1)^{\frac{1}{3}}}+\frac{\alpha_{k+1}}{\alpha_k}-1\right)\label{theorem:diminishing_convergence:beta_proof}
			\end{align}
	where the first inequality uses $k_0\ge\lceil\frac{2}{1-\tilde{\rho}^3}\rceil\ge2$ and the last uses $2k_0^{\frac{1}{3}}<3(k_0+1)^{\frac{1}{3}}$ for all $k_0\ge2$. Rearranging~\eqref{theorem:diminishing_convergence:beta_proof} results in
		\begin{equation}\label{theorem:diminishing_convergence:beta_term}
			\beta_k<\frac{(k_0+1)^{\frac{1}{3}}}{2k_0^{\frac{1}{3}}+(k_0+1)^{\frac{1}{3}}}<1
		\end{equation}
	where the right most inequality uses $2k_0^{\frac{1}{3}}>0$ by $k_0\ge2.$ Since $1-\frac{\alpha_{k+1}}{\alpha_k}>0$ by $\alpha_k>\alpha_{k+1}$, we also have $0<\beta_k.$\\
	Second, let 
		\begin{equation}\label{theorem:diminishing_convergence:params}
				\gamma_1^{(k)}\triangleq\frac{1}{NL(1-\tilde{\rho})}\text{, }\gamma_2^{(k)}\triangleq\frac{16}{N(1-\tilde{\rho})\alpha_k}\text{, and }\gamma_3^{(k)}\triangleq\frac{1}{24NL^2\alpha_{k+1}}
		\end{equation}
	in~\eqref{lemma:base_change_diminishing:bound} to have
		\begin{align}
					&\underbrace{\left(\frac{1}{4N\alpha_k}-\frac{(1-\beta_k)^2}{6N\alpha_{k+1}}-\frac{3L}{2N}\right)}_{(A')}\Exp\fronorm{\x^{(k+1)}-\bar{\x}^{(k)}}\nonumber\\
					&+\underbrace{\left(\frac{1}{2N\alpha_k}-\frac{16L}{N(1-\tilde{\rho})^2}\right)}_{(B')}\Exp\fronorm{\x^{(k+1)}-\z^{(k)}}\nonumber\\
					&+\underbrace{\left(\frac{16}{N(1-\tilde{\rho})\alpha_{k-1}}-\frac{16\tilde{\rho}}{N(1-\tilde{\rho})\alpha_{k}}-\frac{L}{2N}-\frac{\tilde{\rho}^2}{2N\alpha_k}-\frac{64L}{N(1-\tilde{\rho})^2}-\frac{(1-\beta_k)^2}{6N\alpha_{k+1}}\right)}_{(C')}\Exp\fronorm{\x_\perp^{(k)}}\nonumber\\
					&+\underbrace{\left(\frac{1}{NL}-\frac{1}{2NL}-\frac{16\alpha_k}{N(1-\tilde{\rho})^2}\right)}_{(D')}\Exp\fronorm{\y_\perp^{(k)}}\nonumber\\
					&+\underbrace{\left(\frac{1}{24NL^2\alpha_k}-\frac{(1-\beta_k)^2}{24NL^2\alpha_{k+1}}-\frac{4\beta_k^2}{NL(1-\tilde{\rho})^2}-\frac{\alpha_k}{N}\right)}_{(E')}\Exp\fronorm{\vec{R}^{(k)}}\nonumber\\
						\le&\Exp\left[\hat{\Phi}^{(k)}-\hat{\Phi}^{(k+1)}\right]+\left(\frac{1}{24NL^2\alpha_{k+1}}+\frac{2}{NL(1-\tilde{\rho})^2}\right)2N\beta_k^2\hat{\sigma}^2.\label{theorem:diminishing_convergence:relation}
				\end{align}
	Next we lower bound ($A'$) - ($E'$). For ($A'$), since $(1-\beta_k)^2<1$ we have
		\begin{align*}
			(A')=\frac{1}{4N\alpha_k}-\frac{(1-\beta_k)^2}{6N\alpha_{k+1}}-\frac{3L}{2N}&>\frac{1}{N\alpha_k}\left(\frac{1}{4}-\frac{1}{6}\left(\frac{k+k_0+1}{k+k_0}\right)^{\frac{1}{3}}-\frac{3L\alpha}{2k_0^{\frac{1}{3}}}\right)\\
			&\ge\frac{1}{N\alpha_k}\left(\frac{1}{4}-\frac{1}{6}\left(\frac{3}{2}\right)^{\frac{1}{3}}-\frac{3}{64}\right)\\
			&>\frac{1}{96N\alpha_k}
		\end{align*}
	where we have used $k_0\ge2$ and $\alpha\le\frac{k_0^{\frac{1}{3}}}{32L}$. For ($B'$), we use $\alpha\le\frac{(1-\tilde{\rho})^2k_0^{\frac{1}{3}}}{64L}$ so $\alpha_k\le\frac{(1-\tilde{\rho})^2}{64L}$ to have
		\begin{align*}
			(B')=\frac{1}{2N\alpha_k}-\frac{16L}{N(1-\tilde{\rho})^2}&\ge\frac{1}{N\alpha_k}\left(\frac{1}{2}-\frac{1}{4}\right)=\frac{1}{4N\alpha_k}.
		\end{align*}
	For ($C'$), we have
		\begin{align*}
			(C')=&\frac{16}{N(1-\tilde{\rho})\alpha_{k-1}}-\frac{16\tilde{\rho}}{N(1-\tilde{\rho})\alpha_{k}}-\frac{L}{2N}-\frac{\tilde{\rho}^2}{2N\alpha_k}-\frac{64L}{N(1-\tilde{\rho})^2}-\frac{(1-\beta_k)^2}{6N\alpha_{k+1}}\\
			=&\frac{1}{N\alpha_k}\left(\frac{16}{(1-\tilde{\rho})}\left(\frac{\alpha_k}{\alpha_{k-1}}-\tilde{\rho}\right)-\frac{L\alpha_k}{2}-\frac{\tilde{\rho}^2}{2}-\frac{64L\alpha_k}{(1-\tilde{\rho})^2}-\frac{(1-\beta_k)^2\alpha_k}{6\alpha_{k+1}}\right)\\
			\ge&\frac{1}{N\alpha_k}\left(\frac{16}{(1-\tilde{\rho})}\left(\left(\frac{k_0-1}{k_0}\right)^{\frac{1}{3}}-\tilde{\rho}\right)-\frac{L\alpha_k}{2}-\frac{\tilde{\rho}^2}{2}-\frac{64L\alpha_k}{(1-\tilde{\rho})^2}-\frac{(1-\beta_k)^2\alpha_k}{6\alpha_{k+1}}\right).
		\end{align*}
	Next, using Lemma~\ref{lemma:diminishing_prep}, since $\tilde{\rho}\in[0,1)$ and $k_0\ge\lceil\frac{2}{1-\tilde{\rho}^3}\rceil$, it holds that
		\begin{align*}
			\left(\frac{k_0-1}{k_0}\right)^{\frac{1}{3}}-\tilde{\rho}\ge\frac{1-\tilde{\rho}}{2}.
		\end{align*}
	Thus, using $(1-\beta_k)^2<1$ and $\alpha_k\le\min\{\frac{1}{32L},\frac{(1-\tilde{\rho})^2}{64L}\}$, it holds
		\begin{align*}
				(C')>&\frac{1}{N\alpha_k}\left(8-\frac{L\alpha_k}{2}-\frac{\tilde{\rho}^2}{2}-\frac{64L\alpha_k}{(1-\tilde{\rho})^2}-\frac{(1-\beta_k)^2\alpha_k}{6\alpha_{k+1}}\right)\\
				>&\frac{1}{N\alpha_k}\left(8-\frac{1}{64}-\frac{1}{2}-1-\frac{1}{6}\left(\frac{k+k_0+1}{k+k_0}\right)^{\frac{1}{3}}\right)\\
                >&\frac{1}{N\alpha_k}\left(8-\frac{1}{64}-\frac{1}{2}-1-\frac{1}{6}\left(\frac{3}{2}\right)^{\frac{1}{3}}\right)\\
				>&\frac{4}{N\alpha_k}.
		\end{align*}
	For ($D'$), we use $\alpha\le\frac{(1-\tilde{\rho})^2k_0^{\frac{1}{3}}}{64L}$ so $\alpha_k\le\frac{(1-\tilde{\rho})^2}{64L}$ to have
		\begin{align*}
			(D')=\frac{1}{NL}-\frac{1}{2NL}-\frac{16\alpha_k}{N(1-\tilde{\rho})^2}&\ge\frac{1}{2NL}-\frac{16\alpha}{N(1-\tilde{\rho})^2k_0^{\frac{1}{3}}}\\
			\ge&\frac{1}{4NL}.
		\end{align*}
	For ($E'$), we factor out $\frac{1}{24NL^2\alpha_{k+1}}$ and expand $(1-\beta_k)^2$ to have
		\begin{align*}
			(E')=&\frac{1}{24NL^2\alpha_{k+1}}\left(\frac{\alpha_{k+1}}{\alpha_{k}}-1+2\beta_k-\beta_k^2-24L^2\alpha_{k+1}\alpha_{k}-\frac{96L\alpha_{k+1}\beta_k^2}{(1-\tilde{\rho})^2}\right)\\
			\ge&\frac{1}{24NL^2\alpha_{k+1}}\left(\frac{\alpha_{k+1}}{\alpha_{k}}-1+2\beta_k-\beta_k^2-48L^2\alpha_{k+1}^2-\frac{96L\alpha_{k+1}\beta_k^2}{(1-\tilde{\rho})^2}\right),
		\end{align*}
	where we have used $\alpha_k\le2\alpha_{k+1}$. Plugging in the definition of $\beta_k$ from~\eqref{theorem:diminishing_choice:bounds} and using $\beta_k\ge48L^2\alpha_{k+1}^2$ gives
		\begin{align*}
			(E')\ge&\frac{\beta_k}{24NL^2\alpha_{k+1}}\left(1-\beta_k\left(1+\frac{96L\alpha_{k+1}}{(1-\tilde{\rho})^2}\right)\right)\\
			\ge&\frac{\beta_k}{24NL^2\alpha_{k+1}}\left(1-\beta_k\left(1+\frac{96}{64}\left(\frac{k_0}{k_0+1}\right)^{\frac{1}{3}}\right)\right)\\
			\stack{\eqref{theorem:diminishing_convergence:beta_term}}{\ge}&\frac{\beta_k}{24NL^2\alpha_{k+1}}\left(1-\frac{(k_0+1)^{\frac{1}{3}}}{2k_0^{\frac{1}{3}}+(k_0+1)^{\frac{1}{3}}}\left(1+\frac{3}{2}\left(\frac{k_0}{k_0+1}\right)^{\frac{1}{3}}\right)\right)\\
			=&\frac{\beta_k}{24NL^2\alpha_{k+1}}\left(\frac{1}{2}\cdot\frac{k_0^{\frac{1}{3}}}{2k_0^{\frac{1}{3}}+(k_0+1)^{\frac{1}{3}}}\right)\\
			\ge&\frac{2\alpha_{k+1}}{N}\left(\frac{1}{2}\cdot\frac{k_0^{\frac{1}{3}}}{2k_0^{\frac{1}{3}}+(k_0+1)^{\frac{1}{3}}}\right)\\
			\ge&\frac{\alpha_{k}}{2N}\left(\frac{k_0^{\frac{1}{3}}}{2k_0^{\frac{1}{3}}+(k_0+1)^{\frac{1}{3}}}\right)\\
		\end{align*}
	where the second inequality uses $\alpha_{k+1}\le\frac{\alpha}{(k_0+1)^{\frac{1}{3}}}$ and $\alpha\le\frac{(1-\tilde{\rho})^2k_0^{\frac{1}{3}}}{64L}$ and the last inequality uses $\alpha_k\le2\alpha_{k+1}$. Next, we sum~\eqref{theorem:diminishing_convergence:relation} over $k=0$ to $K-1$; using the established lower bounds to have
		\begin{align}
			&\sum_{k=0}^{K-1}\frac{1}{96N\alpha_k}\Exp\fronorm{\x^{(k+1)}-\bar{\x}^{(k)}}+\sum_{k=0}^{K-1}\frac{1}{4N\alpha_k}\Exp\fronorm{\x^{(k+1)}-\z^{(k)}}\nonumber\\
			&+\sum_{k=0}^{K-1}\frac{4}{N\alpha_k}\Exp\fronorm{\x_\perp^{(k)}}+\sum_{k=0}^{K-1}\frac{1}{4NL}\Exp\fronorm{\y_\perp^{(k)}}\nonumber\\
			&+\sum_{k=0}^{K-1}\left(\frac{k_0^{\frac{1}{3}}}{2k_0^{\frac{1}{3}}+(k_0+1)^{\frac{1}{3}}}\right)\frac{\alpha_{k}}{2N}\Exp\fronorm{\vec{R}^{(k)}}\nonumber\\
				\le&\left(\hat{\Phi}^{(0)}-\phi^*\right)+\sum_{k=0}^{K-1}\left(\frac{1}{12L^2\alpha_{k+1}}+\frac{4}{L(1-\tilde{\rho})^2}\right)\beta_k^2\hat{\sigma}^2.\label{theorem:diminishing_convergence:pre_bound}
		\end{align}
	 where we have used $\phi^*\le\hat{\Phi}^{(k)}$ for any $k\ge0.$ The final phase of the proof uses Lemma~\ref{lemma:stationary_bound} to provide a concise convergence statement. To do so, we use Jensen's inequality to have
	 	\begin{align*}
	 		\norm{\bar{\vec{r}}^{(k)}}_2^2\le\frac{1}{N}\sum_{i=1}^N\norm{\vec{r}_i^{(k)}}_2^2=\frac{1}{N}\fronorm{\vec{R}^{(k)}}.
	 	\end{align*}
	 Applying this to~\eqref{lemma:stationary_bound:bound}, multiply both sides of~\eqref{lemma:stationary_bound:bound} by $\left(\frac{k_0^{\frac{1}{3}}}{2k_0^{\frac{1}{3}}+(k_0+1)^{\frac{1}{3}}}\right)\alpha_k$, sum from $k=0,\dots,K-1$, and take the expectation to have
	 		\begin{align}
	 			&\left(\frac{k_0^{\frac{1}{3}}}{2k_0^{\frac{1}{3}}+(k_0+1)^{\frac{1}{3}}}\right)\sum_{k=0}^{K-1}\alpha_k\left(\frac{1}{N}\sum_{i=1}^N\Exp\norm{P\left(\vec{z}_i^{(k)},\nabla f(\vec{z}_i^{(k)}),\alpha_k\right)}_2^2\right)\nonumber\\
	 			&+\left(\frac{k_0^{\frac{1}{3}}}{2k_0^{\frac{1}{3}}+(k_0+1)^{\frac{1}{3}}}\right)\sum_{k=0}^{K-1}\alpha_k\left(\frac{L^2}{N}\Exp\fronorm{\x_\perp^{(k)}}+\frac{L^2}{N}\Exp\fronorm{\z_\perp^{(k)}}\right)\nonumber\\
	 				\le&\left(\frac{k_0^{\frac{1}{3}}}{2k_0^{\frac{1}{3}}+(k_0+1)^{\frac{1}{3}}}\right)\sum_{k=0}^{K-1}\alpha_k\left(\frac{6}{N}\Exp\fronorm{\y_\perp^{(k)}}+\frac{6}{N}\Exp\fronorm{\vec{R}^{(k)}}\right)\nonumber\\
	 				&+\left(\frac{k_0^{\frac{1}{3}}}{2k_0^{\frac{1}{3}}+(k_0+1)^{\frac{1}{3}}}\right)\sum_{k=0}^{K-1}\alpha_k\left(\frac{32L^2}{N}\Exp\fronorm{\x_\perp^{(k)}}+\frac{2}{N\alpha_k^2}\Exp\fronorm{\x^{(k+1)}-\z^{(k)}}\right).\label{theorem:diminishing_convergence:eqn1}
	 		\end{align}
	  We relate each of the terms on the right-hand side of~\eqref{theorem:diminishing_convergence:eqn1} to 12 times of the left-hand side of~\eqref{theorem:diminishing_convergence:pre_bound}.  Since $\alpha_k\le\frac{1}{32L}$ and $\left(\frac{k_0^{\frac{1}{3}}}{2k_0^{\frac{1}{3}}+(k_0+1)^{\frac{1}{3}}}\right)\le1$, it holds for all $k\ge0,$
		\begin{equation}\label{theorem:diminishing_convergence:y_bound}
			\left(\frac{k_0^{\frac{1}{3}}}{2k_0^{\frac{1}{3}}+(k_0+1)^{\frac{1}{3}}}\right)\frac{6\alpha_k}{N}\fronorm{\y_\perp^{(k)}}\le\frac{3}{NL}\fronorm{\y_\perp^{(k)}}.
		\end{equation}
	Next, by $\alpha_k\le\frac{1}{32L}$ and $\left(\frac{k_0^{\frac{1}{3}}}{2k_0^{\frac{1}{3}}+(k_0+1)^{\frac{1}{3}}}\right)\le1$, we have
		\begin{equation}\label{theorem:diminishing_convergence:x_bound}
			\left(\frac{k_0^{\frac{1}{3}}}{2k_0^{\frac{1}{3}}+(k_0+1)^{\frac{1}{3}}}\right)\frac{32L^2\alpha_k^2}{N}\fronorm{\x_\perp^{(k)}}\le\frac{48}{N}\fronorm{\x_\perp^{(k)}}.
		\end{equation}
	Additionally, since $\left(\frac{k_0^{\frac{1}{3}}}{2k_0^{\frac{1}{3}}+(k_0+1)^{\frac{1}{3}}}\right)\le1$, it holds that
		\begin{equation}\label{theorem:diminishing_convergence:other_x_bound}
			\left(\frac{k_0^{\frac{1}{3}}}{2k_0^{\frac{1}{3}}+(k_0+1)^{\frac{1}{3}}}\right)\frac{2}{N\alpha_k}\fronorm{\x^{(k+1)}-\z^{(k)}}\le\frac{3}{N\alpha_k}\fronorm{\x^{(k+1)}-\z^{(k)}}.
		\end{equation}
	Combining~\eqref{theorem:diminishing_convergence:y_bound} -~\eqref{theorem:diminishing_convergence:other_x_bound} in conjunction with 12 times of~\eqref{theorem:diminishing_convergence:pre_bound} and~\eqref{theorem:diminishing_convergence:eqn1} results in
		\begin{align*}
			&\left(\frac{k_0^{\frac{1}{3}}}{2k_0^{\frac{1}{3}}+(k_0+1)^{\frac{1}{3}}}\right)\sum_{k=0}^{K-1}\alpha_k\left(\frac{1}{N}\sum_{i=1}^N\Exp\norm{P\left(\vec{z}_i^{(k)},\nabla f(\vec{z}_i^{(k)}),\alpha_k\right)}_2^2\right)\nonumber\\
			&+\left(\frac{k_0^{\frac{1}{3}}}{2k_0^{\frac{1}{3}}+(k_0+1)^{\frac{1}{3}}}\right)\sum_{k=0}^{K-1}\alpha_k\left(\frac{L^2}{N}\Exp\fronorm{\x_\perp^{(k)}}+\frac{L^2}{N}\Exp\fronorm{\z_\perp^{(k)}}\right)\nonumber\\
				\le&12\left(\hat{\Phi}^{(0)}-\phi^*\right)+\sum_{k=0}^{K-1}\left(\frac{1}{L^2\alpha_{k+1}}+\frac{48}{L(1-\tilde{\rho})^2}\right)\beta_k^2\hat{\sigma}^2.
		\end{align*}
	Using $\fronorm{\x_\perp^{(k)}}\ge0$ completes the proof.
\end{proof}

\subsubsection{Complexity analysis}\label{appendix:diminishing_complexity}

\begin{remark}\label{remark:batch_2}
	Similar to Remark~\ref{remark:batch}, the convergence of Algorithm~\ref{algo:} depends upon $\hat{\Phi}^{(0)}$ (see~\eqref{lemma:base_change_diminishing:lyapunov}), but in this setting we do not need a big initial batch, in terms of dependence upon $\varepsilon$. The initial variables must be equal for all agents and the number of initial communications to have $\y^{(0)}$ must be sufficiently large, as in Corollary~\ref{corollary:constant_convergence}.
\end{remark}

\begin{corollary}\label{corollary:diminishing_convergence}
	Let $\varepsilon>0$ be given and assume that $L\ge1$. Under the same conditions as in Theorem~\ref{theorem:diminishing_convergence}, let $\blx_i^{(0)}=\blx_j^{(0)}$ for all $i,j=1,\dots,N$, let the local batch size $m=\bigO{1}$ for all iterations, choose $k_0=\lceil\frac{2}{(1-\tilde{\rho})^6}\rceil$, and perform $T_0=\left\lceil\frac{-2\ln(1-\tilde{\rho})}{\sqrt{1-\rho}}\right\rceil$ communications by Algorithm~\ref{algo:cheby} for the initial gradient tracking update in line 1 of Algorithm~\ref{algo:}. Then choose $\alpha$ such that
		\begin{equation}\label{corollary:diminishing_convergence:alpha}
			\alpha=\frac{1}{64L}.
		\end{equation}
	Then for all
		\begin{equation}\label{corollary:diminishing_convergence:transient_iterations}
					K\ge2^{\frac{3}{2}}k_0,
		\end{equation}
	Algorithm~\ref{algo:} produces a stochastic $\varepsilon$-stationary point as defined in Definition~\ref{def:stationarity} in
		\begin{equation}\label{corollary:diminishing_convergence:grad_and_comms}
			K=\bigO{\max\left\{\frac{\left(L\delta\right)^{\frac{3}{2}}+\hat{\sigma}^3+k_0^{\frac{1}{2}}\sigma^3}{\varepsilon^{\frac{3}{2}}},\frac{\hat{\sigma}^3\left(\abs{\ln\varepsilon}+\abs{\ln\hat{\sigma}}\right)^{\frac{3}{2}}}{\varepsilon^{\frac{3}{2}}}\right\}}
		\end{equation}
	local stochastic gradient computations and $T_0+ T(K-1)$ neighbor communications for any $T\ge1$.
\end{corollary}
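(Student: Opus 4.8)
The plan is to mirror the structure of Corollary~\ref{corollary:constant_convergence} and carry out four steps in order: (i) check that $\alpha=1/(64L)$ and $k_0=\lceil 2/(1-\tilde\rho)^6\rceil$ are admissible choices in Theorem~\ref{theorem:diminishing_convergence}; (ii) bound the initial Lyapunov gap $\Exp[\hat\Phi^{(0)}]-\phi^*$ by a quantity independent of $\tilde\rho$, $N$ and $K$; (iii) evaluate the two sums in~\eqref{theorem:diminishing_convergence:bound} as functions of $K$; and (iv) divide through by $c\sum_k\alpha_k$, impose the stationarity target $\varepsilon$, and invert the resulting (implicit) inequality for $K$.

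Step (i) is routine: since $\tilde\rho\in[0,1)$ we have $k_0\ge 2$ and $(1-\tilde\rho)^6\le 1-\tilde\rho^3$ (because $(1-\tilde\rho)^5\le 1\le 1+\tilde\rho+\tilde\rho^2$), so $k_0\ge\lceil 2/(1-\tilde\rho^3)\rceil$ as Theorem~\ref{theorem:diminishing_convergence} requires; moreover $k_0^{1/3}\ge 2^{1/3}$ and $(1-\tilde\rho)^2k_0^{1/3}\ge 2^{1/3}$, whence $\alpha=1/(64L)$ obeys the cap in~\eqref{theorem:diminishing_choice:bounds}, making~\eqref{theorem:diminishing_convergence:bound} available with the coefficients~\eqref{theorem:diminishing_convergence:params}. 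For step (ii) I would expand $\hat\Phi^{(0)}-\phi^*$ through~\eqref{lemma:base_change_diminishing:lyapunov}: the initialization $\blx_i^{(0)}=\blx_j^{(0)}$ annihilates the $\fronorm{\x_\perp^{(0)}}$ term; by Remark~\ref{remark:batch_2} no large initial batch is used, so $\Exp\fronorm{\vec{R}^{(0)}}\le N\sigma^2/m_0=\bigO{N\sigma^2}$ by~\eqref{assumption:variance}, and since the coefficient of $\fronorm{\vec{R}^{(0)}}$ in~\eqref{theorem:diminishing_convergence:params} is $\bigO{k_0^{1/3}/(NL)}$ when $\alpha=1/(64L)$, this piece contributes $\bigO{k_0^{1/3}\sigma^2/L}$; and the $\fronorm{\y_\perp^{(0)}}$ term is controlled exactly as in Corollary~\ref{corollary:constant_convergence} via Lemma~\ref{lemma:chebyshev}, the choice $T_0=\lceil -2\ln(1-\tilde\rho)/\sqrt{1-\rho}\rceil$, and Lemma~\ref{lemma:initial_comms}, which give $\gamma_1^{(-1)}\fronorm{\y_\perp^{(0)}}\le\tfrac{4}{NL}\fronorm{\d^{(0)}-\bar{\d}^{(0)}}$, a constant in $\tilde\rho,N,K$ in expectation. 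Collecting, $\Exp[\hat\Phi^{(0)}]-\phi^*=\bigO{\delta+k_0^{1/3}\sigma^2/L}$ with $\delta$ independent of $\tilde\rho$, $N$, $K$.

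Step (iii) is the technical crux. Writing $\alpha_k=\alpha/(k+k_0)^{1/3}$, I would show $1-\alpha_{k+1}/\alpha_k=\bigO{1/(k+k_0)}$ and $48L^2\alpha_{k+1}^2=\Theta((k+k_0)^{-2/3})$, so the second term dominates and $\beta_k=\Theta((k+k_0)^{-2/3})$, $\beta_k^2=\Theta((k+k_0)^{-4/3})$. Consequently $\sum_{k=0}^{K-1}\beta_k^2/(L^2\alpha_{k+1})=\bigO{L^{-1}\ln\frac{K+k_0}{k_0}}$, while $\sum_{k=0}^{K-1}48\beta_k^2/(L(1-\tilde\rho)^2)=\bigO{L^{-1}(1-\tilde\rho)^{-2}k_0^{-1/3}}=\bigO{L^{-1}}$, using $k_0\ge 2/(1-\tilde\rho)^6$. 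Hence the right side of~\eqref{theorem:diminishing_convergence:bound} is $\bigO{\delta+k_0^{1/3}\sigma^2/L+L^{-1}\hat\sigma^2(1+\ln\frac{K+k_0}{k_0})}$. On the left side, $c>\tfrac14$ and, for $K\ge 2^{3/2}k_0$, $\sum_{k=0}^{K-1}\alpha_k\ge\tfrac34\alpha K^{2/3}$ — this is precisely the transient requirement~\eqref{corollary:diminishing_convergence:transient_iterations}. Drawing the output index $\tau$ with probability proportional to $\alpha_\tau$ (as specified in the corollary) and dividing~\eqref{theorem:diminishing_convergence:bound} by $c\sum_k\alpha_k$ then gives, after substituting $\alpha=1/(64L)$ and applying Lemma~\ref{lemma:stationary_bound} together with $\norm{\bar{\vec r}^{(k)}}_2^2\le\fronorm{\vec R^{(k)}}/N$ as already done in the theorem's proof, a bound of order $\big(L\delta+k_0^{1/3}\sigma^2+\hat\sigma^2+\hat\sigma^2\ln\frac{K+k_0}{k_0}\big)/K^{2/3}$ on the stationarity measure of Definition~\ref{def:stationarity}. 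Forcing this $\le\varepsilon$ yields $K\ge\big(C\varepsilon^{-1}(L\delta+k_0^{1/3}\sigma^2+\hat\sigma^2+\hat\sigma^2\ln\frac{K+k_0}{k_0})\big)^{3/2}$; since $K$ occurs only inside a logarithm and all other factors are polynomial in $1/\varepsilon,\hat\sigma,\delta,k_0,L$, a standard bootstrap ($K\le A+B(\ln K)^{3/2}\Rightarrow K=\bigO{A+B(\ln(A+B))^{3/2}}$) collapses $\ln\frac{K+k_0}{k_0}$ to $\bigO{\abs{\ln\varepsilon}+\abs{\ln\hat\sigma}}$ once the remaining constants are absorbed, producing the $\max\{\cdot,\cdot\}$ expression in~\eqref{corollary:diminishing_convergence:grad_and_comms}; the number of gradient evaluations is $m(K-1)=\Theta(K)$ because $m=\bigO{1}$ at every iteration, and the communication count is $T_0+T(K-1)$ for any $T\ge 1$.

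The main obstacle is step (iii): one must confirm that it is the $48L^2\alpha_{k+1}^2$ part of $\beta_k$, not the $1-\alpha_{k+1}/\alpha_k$ part, that sets the decay rate — so that $\sum\beta_k^2/\alpha_{k+1}$ is merely harmonic (hence only logarithmically divergent) whereas $\sum\beta_k^2$ converges with exactly the $k_0^{-1/3}$ needed to cancel $(1-\tilde\rho)^{-2}$ — and then one must carefully disentangle the self-referential appearance of $K$ inside the logarithm, keeping precise track of which quantities may enter the $\tilde{\mathcal O}$ (only $\varepsilon$ and $\hat\sigma$) and which are held fixed ($L,\delta,k_0$). Getting this bookkeeping right is what converts the raw recursion into the advertised $\tilde{\mathcal O}(\varepsilon^{-3/2})$ sample complexity with $\bigO{1}$ batch size.
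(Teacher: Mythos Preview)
Your proposal is correct and follows essentially the same route as the paper's proof: verify admissibility of $k_0$ and $\alpha$, bound $\hat\Phi^{(0)}-\phi^*$ via the initialization and the $T_0$ Chebyshev rounds, estimate the $\beta_k^2$-sums (the paper does this by splitting $\beta_k^2\le 2(1-\alpha_{k+1}/\alpha_k)^2+2(48L^2\alpha_{k+1}^2)^2$ and bounding the four resulting sums separately, which is a more explicit version of your dominance argument), and then lower bound $c\sum_k\alpha_k$ by an integral using $K\ge 2^{3/2}k_0$. The paper is terser about the self-referential $\ln(K+k_0)$ step than your bootstrap, but the content is the same.
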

\begin{proof}
	First, notice that $k_0=\lceil\frac{2}{(1-\tilde{\rho})^6}\rceil\ge\lceil\frac{2}{(1-\tilde{\rho}^3}\rceil$ by $(1-\tilde{\rho})^6\le(1-\tilde{\rho})^3\le1-\tilde{\rho}^3$ for all $\tilde{\rho}\in[0,1).$ Hence $k_0$ satisfies the requirements of Theorem~\ref{theorem:diminishing_convergence}. Next, we have $c\triangleq\frac{k_0^{\frac{1}{3}}}{2k_0^{\frac{1}{3}}+(k_0+1)^{\frac{1}{3}}}>\frac{1}{4}$ for all $k_0\ge2$. Hence it holds that
		\begin{align*}
		c\sum_{k=0}^{K-1}\alpha_k&=c\alpha\sum_{k=0}^{K-1}\frac{1}{(k+k_0)^{\frac{1}{3}}}\\
		&\ge c\alpha\int_{0}^K\frac{1}{(x+k_0)^{\frac{1}{3}}}dx\\
		&=\frac{3c\alpha}{2}\left((K+k_0)^{\frac{2}{3}}-k_0^{\frac{2}{3}}\right)\\
		&>\frac{3\alpha}{8}\left((K+k_0)^{\frac{2}{3}}-k_0^{\frac{2}{3}}\right)
		\end{align*}
	Notice $(K+k_0)^{\frac{2}{3}}-k_0^{\frac{2}{3}}>\frac{K^{\frac{2}{3}}}{2}$ as long as $K$ satisfies~\eqref{corollary:diminishing_convergence:transient_iterations}. Hence for some iterate $\tau\in\{0,1,\dots,K-1\}$ chosen with probability
		\begin{equation}\label{corollary:diminishing_convergence:probability}
			\prob{\tau=k}=\frac{\frac{c\alpha}{(k+k_0)^{\frac{1}{3}}}}{\sum_{j=0}^{K-1}\frac{c\alpha}{(j+k_0)^{\frac{1}{3}}}},\enskip k=0,\dots,K-1,
		\end{equation}
	 it holds that
		\begin{align*}
			&\frac{1}{N}\sum_{i=1}^N\Exp\norm{P\left(\vec{z}_i^{(\tau)},\nabla f(\vec{z}_i^{(\tau)}),\alpha_\tau\right)}_2^2+\frac{L^2}{N}\Exp\fronorm{\z_\perp^{(\tau)}}\\
			\le&\frac{64\hat{\Delta}}{\alpha K^{\frac{2}{3}}}+\frac{16}{3\alpha K^{\frac{2}{3}}}\sum_{k=0}^{K-1}\left(\frac{1}{L^2\alpha_{k+1}}+\frac{48}{L(1-\tilde{\rho})^2}\right)\beta_k^2\hat{\sigma}^2.
		\end{align*}
	Expanding the summation on the right yields
		\begin{align}\label{corollary:diminishing_convergence:upper_bound}
			&\sum_{k=0}^{K-1}\left(\frac{1}{L^2\alpha_{k+1}}+\frac{48}{L(1-\tilde{\rho})^2}\right)\beta_k^2\hat{\sigma}^2\nonumber\\
			=&\hat{\sigma}^2\sum_{k=0}^{K-1}\left(\frac{1}{L^2\alpha_{k+1}}+\frac{48}{L(1-\tilde{\rho})^2}\right)\left(1-\frac{\alpha_{k+1}}{\alpha_k}+48L^2\alpha_{k+1}^2\right)^2\nonumber\\
			\le&2\hat{\sigma}^2\sum_{k=0}^{K-1}\left(\frac{1}{L^2\alpha_{k+1}}+\frac{48}{L(1-\tilde{\rho})^2}\right)\left(\left(1-\frac{\alpha_{k+1}}{\alpha_k}\right)^2+\left(48L^2\alpha_{k+1}^2\right)^2\right)
		\end{align}
	where the last inequality uses $(a+b)^2\le2a^2+2b^2$ for any $a,b\in\R.$ Utilizing $a^3-b^3=(a-b)(a^2+ab+b^2)$ for any $a,b\in\R$, it holds that
		\begin{equation}\label{corollary:diminishing_convergence:alpha_k_relation}
			1-\frac{\alpha_{k+1}}{\alpha_k}=1-\frac{(k+k_0)^{\frac{1}{3}}}{(k+k_0+1)^{\frac{1}{3}}}=\frac{(k+k_0+1)^{-\frac{1}{3}}}{(k+k_0+1)^{\frac{2}{3}}+(k+k_0+1)^{\frac{1}{3}}(k+k_0)^{\frac{1}{3}}+(k+k_0)^{\frac{2}{3}}}.
		\end{equation}
	Notice that for all $k\ge1$ and $k_0\ge2$, it holds that
		\begin{align*}
		2(k+k_0+1)^{\frac{2}{3}}\le(k+k_0+1)^{\frac{2}{3}}+(k+k_0+1)^{\frac{1}{3}}(k+k_0)^{\frac{1}{3}}+(k+k_0)^{\frac{2}{3}}.
		\end{align*}
	Squaring both sides of the above inequality and rearranging results in
		\begin{equation}\label{corollary:diminishing_convergence:eqn1}
			\frac{1}{\left((k+k_0+1)^{\frac{2}{3}}+(k+k_0+1)^{\frac{1}{3}}(k+k_0)^{\frac{1}{3}}+(k+k_0)^{\frac{2}{3}}\right)^2}\le\frac{1}{4(k+k_0+1)^{\frac{4}{3}}}
		\end{equation}
	Utilizing~\eqref{corollary:diminishing_convergence:alpha_k_relation} and multiplying both sides of~\eqref{corollary:diminishing_convergence:eqn1} by $(k+k_0+1)^{-\frac{1}{3}}$, we further bound
		\begin{align}
			&2\hat{\sigma}^2\sum_{k=0}^{K-1}\frac{1}{L^2\alpha_{k+1}}\left(1-\frac{\alpha_{k+1}}{\alpha_k}\right)^2\nonumber\\
			=&\frac{2\hat{\sigma}^2}{L^2\alpha}\sum_{k=0}^{K-1}\frac{(k+k_0+1)^{-\frac{1}{3}}}{\left((k+k_0+1)^{\frac{2}{3}}+(k+k_0+1)^{\frac{1}{3}}(k+k_0)^{\frac{1}{3}}+(k+k_0)^{\frac{2}{3}}\right)^2}\nonumber\\
			\le&\frac{\hat{\sigma}^2}{2L^2\alpha }\sum_{k=0}^{K-1}\frac{1}{(k+k_0+1)^{\frac{5}{3}}}\nonumber\\
			\le&\frac{\hat{\sigma}^2}{2L^2\alpha}\int_{-1}^{K-1}\frac{1}{(x+k_0+1)^{\frac{5}{3}}}dx\nonumber\\
			\le&\frac{\hat{\sigma}^2}{2L^2\alpha}\cdot\frac{3}{2k_0^{\frac{2}{3}}}\nonumber\\
			=&\frac{3\hat{\sigma}^2}{4L^2\alpha k_0^{\frac{2}{3}}}\label{corollary:diminishing_convergence:bound1}
		\end{align}
	Again, utilizing~\eqref{corollary:diminishing_convergence:alpha_k_relation} and multiplying both sides of~\eqref{corollary:diminishing_convergence:eqn1} by $(k+k_0+1)^{-\frac{2}{3}}$, we have
		\begin{align}
			2\hat{\sigma}^2\sum_{k=0}^{K-1}\frac{48}{L(1-\tilde{\rho})^2}\left(1-\frac{\alpha_{k+1}}{\alpha_k}\right)^2&\le\frac{24\hat{\sigma}^2}{L(1-\tilde{\rho})^2}\sum_{k=0}^{K-1}\frac{1}{(k+k_0+1)^2}\nonumber\\
			&\le\frac{24\hat{\sigma}^2}{L(1-\tilde{\rho})^2k_0},\label{corollary:diminishing_convergence:bound2}
		\end{align}
	where we have also upper bounded the summation by the corresponding integral, since both $(x+k_0+1)^{-2}$ and $(x+k_0+1)^{-\frac{5}{3}}$ are decreasing for all $x>0$. Next, we bound
		\begin{align}
			2\hat{\sigma}^2\sum_{k=0}^{K-1}\frac{1}{L^2\alpha_{k+1}}\left(48L^2\alpha_{k+1}^2\right)^2=&4608\hat{\sigma}^2L^2\alpha^3\sum_{k=0}^{K-1}\frac{1}{k+k_0+1}\nonumber\\
			\le&4608\hat{\sigma}^2L^2\alpha^3\left(\ln(K+k_0)-\ln(k_0)\right)\nonumber\\
			\le&4608\hat{\sigma}^2L^2\alpha^3\ln(K+k_0),\label{corollary:diminishing_convergence:bound3}
		\end{align}
	and for $b=2\cdot48^3$ we have,
		\begin{align}
			2\hat{\sigma}^2\sum_{k=0}^{K-1}\frac{48}{L(1-\tilde{\rho})^2}\left(48L^2\alpha_{k+1}^2\right)^2&=\frac{b\hat{\sigma}^2L^3\alpha^4}{(1-\tilde{\rho})^2}\sum_{k=0}^{K-1}\frac{1}{(k+k_0+1)^{\frac{4}{3}}}\nonumber\\
			&\le\frac{3b\hat{\sigma}^2L^3\alpha^4}{(1-\tilde{\rho})^2k_0^{\frac{1}{3}}}.\label{corollary:diminishing_convergence:bound4}
		\end{align}
	Plugging~\eqref{corollary:diminishing_convergence:bound1} -~\eqref{corollary:diminishing_convergence:bound4} into~\eqref{corollary:diminishing_convergence:upper_bound} results in an inequality of the form
		\begin{align}
			&\frac{1}{N}\sum_{i=1}^N\Exp\norm{P\left(\vec{z}_i^{(\tau)},\nabla f(\vec{z}_i^{(\tau)}),\alpha_\tau\right)}_2^2+\frac{L^2}{N}\Exp\fronorm{\z_\perp^{(\tau)}}\nonumber\\
			\le&\frac{64\hat{\Delta}}{\alpha K^{\frac{2}{3}}}+\frac{16}{3\alpha K^{\frac{2}{3}}}\left(\frac{3\hat{\sigma}^2}{4L^2\alpha k_0^{\frac{2}{3}}}+\frac{24\hat{\sigma}^2}{L(1-\tilde{\rho})^2k_0}\right)\nonumber\\
			&+\frac{16}{3\alpha K^{\frac{2}{3}}}\left(4608\hat{\sigma}^2L^2\alpha^3\ln(K+k_0)+\frac{3b\hat{\sigma}^2L^3\alpha^4}{(1-\tilde{\rho})^2k_0^{\frac{1}{3}}}\right).\label{corollary:diminishing_convergence:relation}
		\end{align}
	Next, by~\eqref{lemma:base_change_diminishing:lyapunov} and~\eqref{theorem:diminishing_convergence:params}, we have
		\begin{equation}\label{corollary:diminishing_complexity:lyapunov}
			\hat{\Phi}^{(0)}\triangleq\phi(\bar{\blx}^{(0)})+\frac{1}{NL(1-\tilde{\rho})}\fronorm{\y_\perp^{(0)}}+\frac{16k_0^{\frac{1}{3}}}{N(1-\tilde{\rho})\alpha}\fronorm{\x_\perp^{(0)}}+\frac{k_0^{\frac{1}{3}}}{24NL^2\alpha}\fronorm{\vec{R}^{(0)}},
		\end{equation}
	where we have defined $\alpha_{-1}\triangleq\alpha_0$ for the $\gamma_2^{(-1)}$ term in~\eqref{lemma:base_change_diminishing:lyapunov}. Similar to the proof of Corollary~\ref{corollary:constant_convergence}, we bound each of the terms on the right-hand side of~\eqref{corollary:diminishing_complexity:lyapunov} by the initialization from Algorithm~\ref{algo:}. We have
		\begin{align}\label{corollary:diminishing_complexity:initial_batch}
			\frac{k_0^{\frac{1}{3}}}{24NL^2\alpha}\fronorm{\vec{R}^{(0)}}\le\frac{k_0^{\frac{1}{3}}\sigma^2}{24L^2\alpha m_0}
		\end{align}
	by~\eqref{assumption:variance} since $\vec{d}_i^{(0)}=\frac{1}{m_0}\sum_{\xi\in B_i^{(0)}}\nabla f_i(\blx_i^{(0)},\xi)$ for all $i=1,\dots,N$. Notice that $\y^{(0)}=\w_{T_0}(\d^{(0)})$. Hence~\eqref{corollary:constant_convergence:initial_y} still holds, so by $T_0=\left\lceil\frac{-2\ln(1-\tilde{\rho})}{\sqrt{1-\rho}}\right\rceil$, we have~\eqref{corollary:constant_convergence:initial_y_perp}. Thus, by $\blx_i^{(0)}=\blx_j^{(0)}$ for all $i,j=1,\dots,N$, we have that $\hat{\Delta}\le\phi(\bar{\blx}^{(0)})+4\fronorm{\d^{(0)}-\bar{\d}^{(0)}}+\frac{k_0^{\frac{1}{3}}\sigma^2}{24L^2\alpha m_0}-\phi^*$.
	By $k_0=\lceil\frac{2}{(1-\tilde{\rho})^6}\rceil$, the $\alpha$ in~\eqref{corollary:diminishing_convergence:alpha} satisfies $\alpha\le\frac{(1-\tilde{\rho})^2k_0^{\frac{1}{3}}}{64L}$. Hence,
		\begin{equation}\label{corollary:diminishing_convergence:initial_gap}
			\frac{\hat{\Delta}}{\alpha}\le256L\fronorm{\d^{(0)}-\bar{\d}^{(0)}}+\frac{172k_0^{\frac{1}{3}}\sigma^2}{m_0}+64L\left(\phi(\bar{\blx}^{(0)})-\phi^*\right).
		\end{equation}
	 Additionally we further bound the two terms on the right-hand side of~\eqref{corollary:diminishing_convergence:relation} that contain $(1-\tilde{\rho})^{-2}$. By the choice of $k_0$, we have $k_0\ge\frac{1}{(1-\tilde{\rho})^6}$, thus it holds that
	 	\begin{align*}
	 		\frac{1}{(1-\tilde{\rho})^2k_0^{\frac{1}{3}}}\le1 \text{ and } \frac{1}{(1-\tilde{\rho})^2k_0}\le(1-\tilde{\rho})^4\le1.
	 	\end{align*}
	 Hence, by recalling $\alpha=\frac{1}{64L}$ and $b=2\cdot48^3$, we have
		\begin{equation}\label{corollary:diminishing_convergence:rho_eqn1}
			\frac{24\hat{\sigma}^2}{\alpha L(1-\tilde{\rho})^2k_0}\le1536\hat{\sigma}^2,
		\end{equation}
	and
		\begin{equation}\label{corollary:diminishing_convergence:rho_eqn2}
			\frac{3b\hat{\sigma}^2L^3\alpha^3}{(1-\tilde{\rho})^2k_0^{\frac{1}{3}}}\le 3\hat{\sigma}^2.
		\end{equation}
	Further, it holds that $\frac{3\hat{\sigma}^2}{4L^2\alpha^2k_0^{\frac{2}{3}}}\le3072\hat{\sigma}^2$; using this and plugging~\eqref{corollary:diminishing_convergence:initial_gap},~\eqref{corollary:diminishing_convergence:rho_eqn1}, and~\eqref{corollary:diminishing_convergence:rho_eqn2} into~\eqref{corollary:diminishing_convergence:relation} yields
		\begin{align}
			&\frac{1}{N}\sum_{i=1}^N\Exp\norm{P\left(\vec{z}_i^{(\tau)},\nabla f(\vec{z}_i^{(\tau)}),\alpha_\tau\right)}_2^2+\frac{L^2}{N}\Exp\fronorm{\z_\perp^{(\tau)}}\nonumber\\
			\le&\frac{64}{K^{\frac{2}{3}}}\left(256L\fronorm{\d^{(0)}-\bar{\d}^{(0)}}+\frac{172k_0^{\frac{1}{3}}\sigma^2}{m_0}+64L\left(\phi(\bar{\blx}^{(0)})-\phi^*\right)\right)\nonumber\\
			&+\frac{16}{3K^{\frac{2}{3}}}\left(3072\hat{\sigma}^2+3\hat{\sigma}^2+1536\hat{\sigma}^2\right)+\frac{6\hat{\sigma}^2\ln\left(K+k_0\right)}{K^{\frac{2}{3}}}\nonumber\\
			\le&\frac{4096L\left(4\fronorm{\d^{(0)}-\bar{\d}^{(0)}}+\phi(\bar{\blx}^{(0)})-\phi^*\right)}{K^{\frac{2}{3}}}\nonumber\\
			&+\frac{\hat{\sigma}^2}{K^{\frac{2}{3}}}\left(24592+32\ln\left(K+k_0\right)+\frac{11008k_0^{\frac{1}{3}}\sigma^2}{m_0\hat{\sigma}^2}\right).\label{corollary:diminishing_convergence:relation_updated}
		\end{align}
	Finally, for $\tau$ chosen according to~\eqref{corollary:diminishing_convergence:probability}, we have
		\begin{align*}
							\frac{1}{N}\sum_{i=1}^N\Exp\norm{P\left(\vec{z}_i^{(\tau)},\nabla f(\vec{z}_i^{(\tau)}),\alpha_\tau\right)}_2^2+\frac{L^2}{N}\Exp\fronorm{\z_\perp^{(\tau)}}\le\varepsilon,
		\end{align*}
	provided
		\begin{equation}\label{corollary:diminishing_convergence:complexity}
			K=\bigO{\max\left\{\frac{\left(L\delta\right)^{\frac{3}{2}}+\hat{\sigma}^3}{\varepsilon^{\frac{3}{2}}},\frac{k_0^{\frac{1}{2}}\sigma^3}{m_0^{\frac{3}{2}}\varepsilon^{\frac{3}{2}}},\frac{\hat{\sigma}^3\left(\abs{\ln\varepsilon}+\abs{\ln\hat{\sigma}}\right)^{\frac{3}{2}}}{\varepsilon^{\frac{3}{2}}}\right\}}
		\end{equation}
	where $\delta\triangleq\fronorm{\d^{(0)}-\bar{\d}^{(0)}}+\phi(\bar{\blx}^{(0)})-\phi^*$. Choosing the initial batch size $m_0=m=\bigO{1}$ yields the total number of gradient computations $mK$ in~\eqref{corollary:diminishing_convergence:grad_and_comms}, provided $K$ satisfies~\eqref{corollary:diminishing_convergence:transient_iterations}.
\end{proof}

\begin{remark}\label{remark:complexity_discussion_2}
	Similar to the discussion provided in Remark~\ref{remark:complexity_discussion}, we note that Chebyshev acceleration can be utilized to perform the neighbor communications (lines 3 and 6 in Algorithm~\ref{algo:}). Since $k_0^{\frac{1}{2}}=\bigO{(1-\tilde{\rho})^{-3}}$, this number can dominate in~\eqref{corollary:diminishing_convergence:grad_and_comms}, indicating that the sample complexity result is \emph{network-dependent}. In order to have the complexity result as indicated in Table~\ref{table:related_works}, we perform $T=\lceil\frac{2}{\sqrt{1-\rho}}\rceil$ Chebyshev communications rounds by Algorithm~\ref{algo:cheby} so that $\tilde{\rho}\le\frac{1}{2}$ by~\eqref{appendix:cheby:upper_bound} and hence the sample complexity cost is independent of $\rho$ and $\tilde{\rho}.$ In this regime, the number of local neighbor communications is
		\begin{align*}
			T_0+TK=\bigO{\frac{1}{\sqrt{1-\rho}}\max\left\{\frac{\left(L\delta\right)^{\frac{3}{2}}+\hat{\sigma}^3+\sigma^3}{\varepsilon^{\frac{3}{2}}},\frac{\hat{\sigma}^3\left(\abs{\ln\varepsilon}+\abs{\ln\hat{\sigma}}\right)^{\frac{3}{2}}}{\varepsilon^{\frac{3}{2}}}\right\}}
		\end{align*}
	which is optimal in terms of dependence upon $\rho$~\cite{scaman17}. Alternatively, if we let the initial batch size be $m_0=\bigO{\frac{k_0^{\frac{1}{3}}}{\sigma^2}}$ independent of $\varepsilon$, then the middle term in~\eqref{corollary:diminishing_convergence:complexity} can be dominated by the first term, in which case the sample complexity is network-independent, after the initial iteration. For cases where the original communication network is not too sparse, e.g. $\rho$ is not too close to 1, this may be preferred over performing Chebyshev acceleration.
\end{remark}

\end{document}